\providecommand{\U}[1]{\protect\rule{.1in}{.1in}}
\theoremstyle{plain}
\newtheorem{theorem}{Theorem}[subsection]
\newtheorem{corollary}[theorem]{Corollary}
\newtheorem{definition}[theorem]{Definition}
\newtheorem{lemma}[theorem]{Lemma}
\newtheorem{proposition}[theorem]{Proposition}
\newtheorem{remark}[theorem]{Remark}
\newcommand{\LeftEqNo}{\let\veqno\@@leqno}
\numberwithin{equation}  {section}
\begin{document}
\title[A stationary approach in von Neumann algebras]{A stationary approach for the Kato-Rosenblum theorem in von Neumann algebras }
\author{Qihui Li}
\curraddr{School of Mathematics, East China University of Science and Technology,
Shanghai, 200237, P. R. China}
\email{qihui\_li@126.com}
\author{Rui Wang}
\curraddr{{Shanghai Aerospace Control Technology Institute}, Shanghai, 200237, P. R. China}
\email{{18121142030@163.com}}
\thanks{The author was partly supported by NSFC (Grant No.11671133, 11871021).).}
\subjclass[2010]{Primary: 47C15; Secondary: 47A40, 47A55}
\keywords{Stationary approach, Generalized wave operators, von Neumann algebras.}

\begin{abstract}
Let $\mathcal{M}$ be a countable decomposable properly infinite semifinite von
Neumann algebra acting on a Hilbert space $\mathcal{H}.$ An analogue of the
Kato-Rosenblum theorem in $\mathcal{M}$ has been proved in [9] by showing the
existence of generalized wave operators. It is well-known that there are two
typical approaches to show the existence of wave operators in the scattering
theory. One is called time-dependent approach and another is called stationary
approach. The main purpose of this article is to introduce a stationary
approach in $\mathcal{M}$ and then to obtain the Kato-Rosenblum theorem in
$\mathcal{M}$ by a stationary approach instead of a time-dependent approach in [9].

\end{abstract}
\maketitle

\section{\bigskip Introduction}

This paper is a sequel to \cite{Li2}\ and \cite{Li}, in which we studied the
diagonalizations of self-adjoint operators modulo norm ideal in semifinite von
Neumann algebras. (see \cite{Kadison},\cite{M1}-\cite{M3} or \cite{V} for more
details about von Neumann algebras.) In particular, we give an analogue of
Kato-Rosenblum theorem in a semifinite von Neumann algebra in \cite{Li}.

Let $\mathcal{H}$ be a complex separable infinite dimensional Hilbert space.
Assume $H$ and $H_{1}$ are densely defined self-adjoint operators on
$\mathcal{H}$ satisfying that $H_{1}-H$ is in the trace class, then the
Kato-Rosenblum theorem asserts that the wave operator $W_{\pm}\left(
H_{1},H\right)  $ of $H$ and $H_{1}$ exists and consequently the absolutely
continuous parts of $H$ and $H_{1}$ are unitarily equivalent. Thus, if a
self-adjoint operator $H$ in $\mathcal{B}(\mathcal{H})$ has a nonzero
absolutely continuous spectrum, then $H$ can not be a sum of a diagonal
operator and a trace class operator. In \cite{Li}, we introduce the concept of
generalized wave operator $W_{\pm}$ based on the notion of norm absolutely
continuous projections. An analogue of Kato-Rosenblum theorem in a semifinite
von Neumann algebra $\mathcal{M}$ is obtained by showing the existence of the
generalized wave operator $W_{\pm}.$ To be more precise, we proved that for
self-adjoint operators $H$ and $H_{1}$ affiliated with $\mathcal{M}$
satisfying $H_{1}-H\in\mathcal{M\cap L}^{1}\left(  \mathcal{M},\tau\right)  ,$
the generalized wave operator $W_{\pm}\left(  H_{1},H\right)  $ exists and
then the norm absolutely continuous part of $H$ and $H_{1}$ are unitarily
equivalent. It implies that a self-adjoint operator $H$ affiliated with
$\mathcal{M}$ can not be a sum of a diagonal operator in $\mathcal{M}$ (see
Definition 1.0.1 in \cite{Li2}) and an operator in $\mathcal{M\cap L}%
^{1}\left(  \mathcal{M},\tau\right)  $ if it has a non-zero norm absolutely
continuous projection in $\mathcal{M}$

The above statements illustrate that showing the existence of wave operators
is the key step to prove two versions of Kato-Rosenblum theorem. In
mathematical scattering theory, wave operator $W_{\pm}$ is an elementary
concept and the existence of $W_{\pm}$ is one of the main research topics in
this area. Actually, there are two typical approaches to show the existence of
$W_{\pm}$. One is called time-dependent approach which has been used in
\cite{Kato} and \cite{Ro} and another is called stationary approach (see
\cite{BE}, \cite{D} or \cite{Y1}). The methods which do not make explicit use
of the time variable $t$ are known as the stationary approaches. An important
merit of a stationary approach is the advanced formula part. We notice that
the method in \cite{Li} to show the Kato-Rosenblum theorem in $\mathcal{M}$ is
a so-called time-dependent approach. So it is natural to ask whether there is
a stationary approach in $\mathcal{M}$. Thus to explore a stationary method in
$\mathcal{M}$ is our main purpose in the current article. We will also show
the Kato-Rosenblum theorem in $\mathcal{M}$ in \cite{Li} by a stationary approach.

The notion of the norm absolutely continuous support $P_{ac}^{\infty}\left(
H\right)  $ of a self-adjoint operator $H$ affiliated to $\mathcal{M}$ plays a
very important role in the Kato-Rosenblum theorem in $\mathcal{M}.$ So in this
article, we are going to characterize $P_{ac}^{\infty}\left(  H\right)  $ by
applying the Kato smoothness given in \cite{Kato2}, we assert that
\[
P_{ac}^{\infty}\left(  H\right)  =\vee\left\{  R\left(  G^{\ast}\right)
:G\in\mathcal{M}\text{ is }H\text{-smooth}\right\}  .
\]
Therefore for a self-adjoint $H$ affiliated with $\mathcal{M},$ if there is a
$H$-smooth operator in $\mathcal{M},$ then $H$ is not a sum of a diagonal
operator in $\mathcal{M}$ and an operator in $\mathcal{M\cap L}^{1}\left(
\mathcal{M},\tau\right)  $.

The construction of this paper is as follows. In section 2, we prepare related
notation, definitions and lemmas. We list the relation between the resolvent
$R_{H}\left(  z\right)  =\left(  H-z\right)  ^{-1}$ and unitary $U_{H}%
(t)=\exp\left(  -itH\right)  $ for a self-adjoint operator $H$ on
$\mathcal{H}.$ We also recall the definitions of Kato smoothness and
generalized wave operators. Some basic properties of generalized wave
operators are discussed in this section too. Section 3 is focused on the main
results of this paper. We first characterize the norm absolutely continuous
support $P_{ac}^{\infty}\left(  H\right)  $ of a self-adjoint operator $H$
affiliated to $\mathcal{M}$ by applying the Kato smoothness. After giving the
concepts of generalized weak wave operators $\widetilde{W}_{\pm}$, generalized
stationary wave operators $\mathcal{U}_{\pm}$ in $\mathcal{M}$, we give a
stationary proof of the Kato-Rosenblum theorem in $\mathcal{M}.$

\section{Preliminaries and Notation}

Let $\mathcal{H}$ be a complex Hilbert space and $\mathcal{B}\left(
\mathcal{H}\right)  $ be the set of all bounded linear operators on
$\mathcal{H}.$ In this article, we assume that $\mathcal{M}\subseteq
\mathcal{B}\left(  \mathcal{H}\right)  $ is a countable decomposable properly
infinite semifinite von Neumann algebra with a faithful normal tracial weight
$\tau$ and $\mathcal{A}\left(  \mathcal{M}\right)  $ is the set of densely
defined, closed operators affiliated with $\mathcal{M}.$

\subsection{The Unitary Group and Resolvent of a Self-adjoint Operator}

The resolvent $R_{H}\left(  z\right)  =\left(  H-z\right)  ^{-1}$ and unitary
$U_{H}(t)=\exp\left(  -itH\right)  $ for a self-adjoint operator $H$ on
$\mathcal{H}$ will be frequently used in the current paper$,$ so we recall
their properties and relations below.

Let $H$ be any self-adjoint operator with domain $\mathcal{D}\left(  H\right)
$ in $\mathcal{H}$ and $\left\{  \left(  E_{H}\left(  \lambda\right)  \right)
\right\}  _{\lambda\in\mathbb{R}}$ be the spectral resolution of the identity
for $H.$ For $f,g$ in $\mathcal{H},$ the unitary group $U_{H}(t)=\exp\left(
-itH\right)  $ has the sesquilinear form%
\begin{equation}
\left\langle U_{H}(t)f,g\right\rangle =\left\langle \exp\left(  -itH\right)
f,g\right\rangle =\int_{-\infty}^{\infty}\exp\left(  -i\lambda t\right)
d\left\langle E_{H}\left(  \lambda\right)  f,g\right\rangle . \label{g1}%
\end{equation}
Similarly, its resolvent $R_{H}\left(  z\right)  =\left(  H-z\right)  ^{-1}$
has the sesquilinear form
\[
\left\langle R_{H}\left(  z\right)  f,g\right\rangle =\int_{-\infty}^{\infty
}\left(  \lambda-z\right)  ^{-1}d\left\langle E_{H}\left(  \lambda\right)
f,g\right\rangle .
\]
The connection between above two sesquilinear forms is given by the relation
\begin{equation}
R_{H}\left(  \lambda\pm i\varepsilon\right)  =\pm i\int_{0}^{\infty}%
\exp\left(  -\varepsilon t\pm i\lambda t\right)  \exp\left(  \pm itH\right)
dt. \label{g3}%
\end{equation}
The proof of equality (\ref{g3}) is based on Fubini's Theorem and given in
Section 1.4 \cite{Y1}. Set
\[
\delta_{H}\left(  \lambda,\varepsilon\right)  =\frac{1}{2\pi i}\left[
R_{H}\left(  \lambda+i\varepsilon\right)  -R_{H}\left(  \lambda-i\varepsilon
\right)  \right]  =\frac{\varepsilon}{\pi}R_{H}\left(  \lambda+i\varepsilon
\right)  R_{H}\left(  \lambda-i\varepsilon\right)  \geq0,
\]
then
\begin{equation}
\varepsilon\pi^{-1}\left\Vert R_{H}\left(  \lambda\pm i\varepsilon\right)
f\right\Vert ^{2}=\left\langle \delta_{H}\left(  \lambda,\varepsilon\right)
f,f\right\rangle \label{g21}%
\end{equation}
and
\begin{equation}
\left\langle \delta_{H}\left(  \lambda,\varepsilon\right)  f,g\right\rangle
=\frac{\varepsilon}{\pi}\int_{-\infty}^{\infty}\frac{1}{\left(  s-\lambda
-i\varepsilon\right)  \left(  s-\lambda+i\varepsilon\right)  }d\left\langle
E_{H}\left(  s\right)  f,g\right\rangle . \label{g40}%
\end{equation}
Denote by $\mathcal{H}_{ac}\left(  H\right)  $ the set of all these vectors
$x\in\mathcal{H}$ such that the mapping $\lambda\longmapsto\left\langle
E_{H}\left(  \lambda\right)  x,x\right\rangle ,$ with $\lambda\in\mathbb{R}$,
is a locally absolutely continuous function on $\mathbb{R}$ (see \cite{Kato4}
or \cite{Y1} for more details). From the argument in Section 1.4 \cite{Y1}, we
conclude that
\begin{equation}
\lim_{\varepsilon\rightarrow0}\left\langle \delta_{H}\left(  \lambda
,\varepsilon\right)  f,g\right\rangle =\frac{d\left\langle E_{H}\left(
\lambda\right)  f,g\right\rangle }{d\lambda},\text{ \ \ a.e. }\lambda
\in\mathbb{R} \label{g6}%
\end{equation}
for $f$ or $g$ in $\mathcal{H}_{ac}\left(  H\right)  $. We also have
\begin{equation}
\frac{d\left\langle E_{H}\left(  \lambda\right)  E_{H}\left(  \Lambda\right)
f,g\right\rangle }{d\lambda}=\mathcal{X}_{\Lambda}\left(  \lambda\right)
\frac{d\left\langle E_{H}\left(  \lambda\right)  f,g\right\rangle }{d\lambda
},\text{ a.e. }\lambda\in\mathbb{R}, \label{g30}%
\end{equation}
where $\mathcal{X}_{\Lambda}\left(  \cdot\right)  $ is the characteristic
function of the Borel set $\Lambda$. The proof of equality (\ref{g30}) can be
found in the proof of Theorem X.4.4 in \cite{Kato4} or Section 1.3 in
\cite{Y1}.

\subsection{Kato Smoothness and generalized wave operators}

Kato smoothness play a very important role in the mathematical scattering
theory. It can be equivalently formulated in terms of the corresponding
unitary group. We recall it in this section.

For a self-adjoint operator $H,$ an operator $G:\mathcal{H\rightarrow H}$ is
called $H$-bounded if $\mathcal{D}\left(  H\right)  \subseteq\mathcal{D}%
\left(  G\right)  $ and $GR_{H}\left(  z\right)  $ is bounded for $z$ in the
resolvent set $\rho=\rho\left(  H\right)  .$

\begin{theorem}
\label{K1}(Theorem 4.3.1 in \cite{Y1} or Theorem 5.1 in \cite{Kato2}) Let $H$
be a densely defined self-adjoint operator in $\mathcal{H}$. Assume that
$G:\mathcal{H\rightarrow H}$ is $H$-bounded operator$,$ then the following
conditions are equivalent.

\begin{enumerate}
\item $\gamma_{1}^{2}=\frac{1}{2\pi}\sup_{f\in\mathcal{D}\left(  H\right)
,\left\Vert f\right\Vert =1}\int_{\mathbb{R}}\left\Vert Ge^{\pm itH}%
f\right\Vert ^{2}dt<\infty;$

\item $\gamma_{2}^{2}=\frac{1}{\left(  2\pi\right)  ^{2}}\sup_{\left\Vert
f\right\Vert =1,\varepsilon>0}\int_{\mathbb{R}}\left(  \left\Vert
GR_{H}(\lambda+i\varepsilon)f\right\Vert ^{2}+\left\Vert GR_{H}\left(
\lambda-i\varepsilon\right)  f\right\Vert ^{2}d\lambda\right)  <\infty;$

\item $\gamma_{3}^{2}=\sup_{\left\Vert f\right\Vert =1,\varepsilon>0}%
\int_{\mathbb{R}}\left\Vert G\delta_{H}\left(  \lambda,\varepsilon\right)
f\right\Vert ^{2}d\lambda<\infty;$

\item $\gamma_{4}^{2}=\sup_{\lambda\in\mathbb{R},\varepsilon>0}\left\Vert
G\delta_{H}\left(  \lambda,\varepsilon\right)  G^{\ast}\right\Vert <\infty;$

\item $\gamma_{5}^{2}=\sup_{\Lambda\subseteq\mathbb{R}}\frac{\left\Vert
GE_{H}\left(  \Lambda\right)  G^{\ast}\right\Vert }{\left\vert \Lambda
\right\vert }<\infty.$
\end{enumerate}

All the constants $\gamma_{j}=\gamma_{j}\left(  G\right)  ,$ $j=1,\cdots,5,$
are equal to one another.
\end{theorem}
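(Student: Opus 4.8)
The plan is to prove the five conditions equivalent by establishing the single cycle of \emph{sharp} estimates
\[
\gamma_4\le\gamma_1=\gamma_2=\gamma_3\le\gamma_4=\gamma_5,
\]
which forces all five constants to coincide. The backbone is the Laplace representation (\ref{g3}) of the resolvent together with Plancherel's theorem, and the Poisson representation (\ref{g40}) of $\delta_H(\lambda,\varepsilon)$. Throughout I use that $R_H(\lambda-i\varepsilon)=R_H(\lambda+i\varepsilon)^{\ast}$ and that, since $G$ is $H$-bounded, $GR_H(z)$ is bounded; as $e^{itH}$ preserves $\mathcal D(H)$, all integrands are controlled for $f\in\mathcal D(H)$ and one extends to arbitrary unit $f$ by density.

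First I would handle the group $\{\gamma_1,\gamma_2,\gamma_3\}$ by Fourier analysis in $t$. For $f\in\mathcal D(H)$ and $\varepsilon>0$, applying $G$ to (\ref{g3}) exhibits $GR_H(\lambda+i\varepsilon)f$ as the Fourier transform in $\lambda$ of $\phi_+(t)=e^{-\varepsilon t}Ge^{itH}f$ supported on $t>0$, and $GR_H(\lambda-i\varepsilon)f$ as the transform of $\phi_-(t)=e^{\varepsilon t}Ge^{itH}f$ supported on $t<0$. Since $\delta_H(\lambda,\varepsilon)=\tfrac1{2\pi i}\bigl(R_H(\lambda+i\varepsilon)-R_H(\lambda-i\varepsilon)\bigr)$, the combination $2\pi i\,G\delta_H(\lambda,\varepsilon)f$ is the transform of $\phi(t)=e^{-\varepsilon|t|}Ge^{itH}f$. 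Applying Plancherel to $\phi_+,\phi_-,\phi$ and letting $\varepsilon\to0$ (the integrals increase monotonically because $e^{-2\varepsilon|t|}\uparrow1$) shows that each of the three defining quantities equals $\tfrac1{2\pi}\int_{\mathbb R}\|Ge^{itH}f\|^2\,dt$ for every $f$; hence $\gamma_1=\gamma_2=\gamma_3$ with the normalizations matching exactly.

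Next I would close the loop through $\{\gamma_4,\gamma_5\}$. The identity $G\delta_H(\lambda,\varepsilon)G^{\ast}=\tfrac{\varepsilon}{\pi}GR_H(\lambda+i\varepsilon)\bigl(GR_H(\lambda+i\varepsilon)\bigr)^{\ast}$, obtained from the definition of $\delta_H$, gives $\gamma_4^2=\sup_{\lambda,\varepsilon}\tfrac{\varepsilon}{\pi}\|GR_H(\lambda+i\varepsilon)\|^2$. Bounding the Laplace integral (\ref{g3}) pointwise by Cauchy--Schwarz yields $\|GR_H(\lambda+i\varepsilon)u\|^2\le(2\varepsilon)^{-1}\int_0^{\infty}\|Ge^{itH}u\|^2\,dt\le(2\varepsilon)^{-1}\,2\pi\gamma_1^2\|u\|^2$, whence $\gamma_4\le\gamma_1$. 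For the reverse inequality I would use positivity: writing $\delta_H=\delta_H^{1/2}\delta_H^{1/2}$ and $\|G\delta_H^{1/2}\|^2=\|G\delta_H G^{\ast}\|\le\gamma_4^2$, together with the normalization $\int_{\mathbb R}\delta_H(\lambda,\varepsilon)\,d\lambda=I$ (a consequence of (\ref{g40}) since the Poisson kernel $p_\varepsilon(x)=\varepsilon/(\pi(x^2+\varepsilon^2))$ integrates to $1$), gives $\int_{\mathbb R}\|G\delta_H(\lambda,\varepsilon)f\|^2\,d\lambda\le\gamma_4^2\|f\|^2$, i.e. $\gamma_3\le\gamma_4$. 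Finally $\gamma_4=\gamma_5$ follows from (\ref{g40}): the scalar measures $dm_u(s)=d\langle GE_H(s)G^{\ast}u,u\rangle$ satisfy $\langle G\delta_H(\lambda,\varepsilon)G^{\ast}u,u\rangle=\int p_\varepsilon(\lambda-s)\,dm_u(s)$, so the density bound $dm_u\le\gamma_5^2\|u\|^2\,ds$ from condition (5) gives $\gamma_4\le\gamma_5$, while integrating $\langle G\delta_H(\lambda,\varepsilon)G^{\ast}u,u\rangle\le\gamma_4^2$ over an interval $\Lambda$ and letting $\varepsilon\to0$ recovers $\langle GE_H(\Lambda)G^{\ast}u,u\rangle\le\gamma_4^2|\Lambda|$, i.e. $\gamma_5\le\gamma_4$.

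The main obstacle is not the equivalence but the assertion that the five constants are \emph{literally equal}: every inequality in the cycle must be sharp, so the crude triangle-inequality estimates that would introduce spurious factors of $2$ (for instance bounding $\|G\delta_H(\lambda,\varepsilon)f\|^2$ directly by $\|GR_H(\lambda+i\varepsilon)f\|^2+\|GR_H(\lambda-i\varepsilon)f\|^2$) must be replaced by the exact Plancherel identity above, in which the interference between $\phi_+$ and $\phi_-$ is absorbed correctly. The remaining care is routine but must be carried out honestly: justifying the Fubini interchanges in the Laplace--Fourier computation, checking by monotonicity that each supremum over $\varepsilon$ is attained in the limit $\varepsilon\to0$, and validating the passage to interval estimates in $\gamma_5\le\gamma_4$ through dominated/monotone convergence for the Poisson kernel.
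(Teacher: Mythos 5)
Your proposal is correct, and it is essentially the standard argument: the paper itself gives no proof of this statement but quotes it from Theorem 4.3.1 of \cite{Y1} (and Theorem 5.1 of \cite{Kato2}), where the proof runs along exactly your lines --- the Laplace representation (\ref{g3}) plus Plancherel identifies the quantities in (1)--(3) exactly (the disjoint supports of $\phi_{\pm}$ giving the sharp constants), the factorization $G\delta_{H}(\lambda,\varepsilon)G^{\ast}=\frac{\varepsilon}{\pi}\left(GR_{H}(\lambda+i\varepsilon)\right)\left(GR_{H}(\lambda+i\varepsilon)\right)^{\ast}$ together with Cauchy--Schwarz closes the loop through (4), and the Poisson transform of $d\left\langle GE_{H}(s)G^{\ast}u,u\right\rangle$ handles (4)$\Leftrightarrow$(5). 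The routine verifications you defer (pulling $G$ through the Bochner integrals on $\mathcal{D}(H)$, the density extension, and checking that $\mathrm{Ran}\,\delta_{H}^{1/2}(\lambda,\varepsilon)\subseteq\mathcal{D}(H)\subseteq\mathcal{D}(G)$ so that $G\delta_{H}^{1/2}$ makes sense) do go through as you indicate.
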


\begin{definition}
\label{K2}Let $H$ be a self-adjoint operator acting on the Hilbert space
$\mathcal{H}.$ If $G$ is $H$-bounded and one of the inequalities (1)-(5) holds
(and then all of them), then operator $G$ is called Kato smooth relative to
the operator $H$ ($H$-smooth). The common value of the quantities $\gamma
_{1},\cdots,\gamma_{5}$ is denoted by $\gamma_{H}\left(  G\right)  .$
\end{definition}

\begin{remark}
There are other expressions for the number $\gamma_{H}\left(  G\right)  $
given in the Section 4.3 (\cite{Y1}). In particular, for each the sign "$\pm
$"
\begin{equation}
\gamma_{H}^{2}\left(  G\right)  =\left(  \frac{1}{2\pi}\right)  ^{2}%
\sup_{\left\Vert f\right\Vert =1,\varepsilon>0}\int_{-\infty}^{\infty
}\left\Vert GR_{H}\left(  \lambda\pm i\varepsilon\right)  f\right\Vert
^{2}d\lambda\label{g7}%
\end{equation}

\end{remark}

Before giving the definition of generalized wave operators in $\mathcal{M},$
we need to recall the following concepts which appear first in \cite{Li}.

\begin{definition}
\label{M0}(\cite{Li})Let $H$ be a self-adjoint element in $\mathcal{A}\left(
\mathcal{M}\right)  $ and let $\left\{  E_{H}\left(  \lambda\right)  \right\}
_{\lambda\in\mathbb{R}}$ be the spectral resolution of the identity for $H$ in
$\mathcal{M}.$ We define $\mathcal{P}_{ac}^{\infty}\left(  H\right)  $ to be
the collection of those projections $P$ in $\mathcal{M}$ such that:

the mapping $\lambda\longmapsto PE_{H}\left(  \lambda\right)  P$ from
$\lambda\in\mathbb{R}$ into $\mathcal{M}$ is locally absolutely continuous,
i.e., for all $a,b\in\mathbb{R}$ with $a<b$ and every $\varepsilon>0,$ there
exists a $\delta>0$ such that $\sum_{i}\left\Vert PE_{H}\left(  b_{i}\right)
P-PE_{H}\left(  a_{i}\right)  P\right\Vert <\varepsilon$ for every finite
collection $\left\{  \left(  a_{i},b_{i}\right)  \right\}  $ of disjoint
intervals in $\left[  a,b\right]  $ with $\sum_{i}\left(  b_{i}-a_{i}\right)
<\delta.$

A projection $P\in\mathcal{P}_{ac}^{\infty}\left(  H\right)  $ is called a
norm absolutely continuous projection with respect to $H.$ Define
\[
P_{ac}^{\infty}\left(  H\right)  =\vee\left\{  P:P\in\mathcal{P}_{ac}^{\infty
}\left(  H\right)  \right\}  .
\]
Such $P_{ac}^{\infty}\left(  H\right)  $ is called the norm absolutely
continuous support of $H$ in $\mathcal{M}$ and denote the range of
$P_{ac}^{\infty}\left(  H\right)  $ by $\mathcal{H}_{ac}^{\infty}\left(
H\right)  .$
\end{definition}

\begin{remark}
\label{I1}Let $P_{ac}\left(  H\right)  $ be the projection from $\mathcal{H}$
onto $\mathcal{H}_{ac}\left(  H\right)  .$ In \cite{Li}, it has been shown
that $P_{ac}^{\infty}\left(  H\right)  \leq P_{ac}\left(  H\right)  $ and
$P_{ac}^{\infty}\left(  H\right)  \in\mathcal{M\cap A}^{\prime}$ where
$\mathcal{A}$ is the von Neumann subalgebra generated by $\left\{
E_{H}\left(  \lambda\right)  \right\}  _{\lambda\in\mathbb{R}}$ in
$\mathcal{M}$ and $\mathcal{A}^{\prime}$ denotes the commutant of
$\mathcal{A}.$
\end{remark}

Now, we are ready to recall the definition of generalized wave operators.

\begin{definition}
\label{I2}(\cite{Li})Let $H$, $H_{1}\in\mathcal{A}\left(  \mathcal{M}\right)
$ be a pair of self-adjoint operators and $J$ be an operator in $\mathcal{M}$.
The generalized wave operator for a pair of self-adjoint operators $H$,
$H_{1}$ and $J$ in $\mathcal{M}$ is the operator
\[
W_{\pm}(H_{1},H;J)=s.o.t\text{-}\lim_{t\rightarrow\pm\infty}e^{itH_{1}%
}Je^{-itH}P_{ac}^{\infty}\left(  H\right)
\]
provided that $s.o.t$ (strong operator topology) limit exists.
\end{definition}

We note that the relation containing the signs "$\pm$" is understood as two
independent equalities. After slightly modify the proof of Theorem 5.2.5 in
\cite{Li}, we can get the next result.

\begin{theorem}
\label{I3}Let $H$, $H_{1}\in\mathcal{A}\left(  \mathcal{M}\right)  $ be a pair
of self-adjoint operators and $J$ be an operator in $\mathcal{M}$. If $W_{\pm
}(H_{1},H;J)$ exists for a pair of self-adjoint operators $H$, $H_{1}$ and
$J,$ then for any Borel function $\varphi$%
\[
\varphi\left(  H_{1}\right)  W_{\pm}(H_{1},H;J)=W_{\pm}(H_{1},H;J)\varphi
\left(  H\right)  .
\]
In particular, for any Borel set $\Lambda\subseteq\mathbb{R}$
\[
E_{H_{1}}\left(  \Lambda\right)  W_{\pm}(H_{1},H;J)=W_{\pm}(H_{1}%
,H;J)E_{H}\left(  \Lambda\right)  .
\]

\end{theorem}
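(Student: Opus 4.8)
The plan is to first establish the intertwining property for the one-parameter unitary groups, namely $e^{isH_{1}}W_{\pm}=W_{\pm}e^{isH}$ for every $s\in\mathbb{R}$, and then to upgrade this to arbitrary Borel functions by a Fourier-uniqueness argument followed by approximation. Throughout write $W=W_{\pm}(H_{1},H;J)$ and $P=P_{ac}^{\infty}\left(H\right)$, so that by definition $W=s.o.t\text{-}\lim_{t\rightarrow\pm\infty}e^{itH_{1}}Je^{-itH}P$.

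For the group case, fix $s\in\mathbb{R}$ and compute $e^{isH_{1}}W$. Since left multiplication by the fixed unitary $e^{isH_{1}}$ is strongly continuous, I would bring it inside the strong limit to get $e^{isH_{1}}W=s.o.t\text{-}\lim_{t}e^{i(t+s)H_{1}}Je^{-itH}P$. Reindexing by $r=t+s$ (which still tends to $\pm\infty$) and factoring $e^{-i(r-s)H}=e^{-irH}e^{isH}$ gives
\[
e^{isH_{1}}W=s.o.t\text{-}\lim_{r}e^{irH_{1}}Je^{-irH}e^{isH}P.
\]
The crucial input is that $e^{isH}$ commutes with $P$: indeed $e^{isH}=\int_{\mathbb{R}}e^{is\lambda}\,dE_{H}\left(\lambda\right)$ lies in the von Neumann algebra $\mathcal{A}$ generated by $\left\{E_{H}\left(\lambda\right)\right\}_{\lambda}$, while $P\in\mathcal{M}\cap\mathcal{A}^{\prime}$ by Remark \ref{I1}. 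Substituting $e^{isH}P=Pe^{isH}$ and using that right multiplication by the bounded operator $e^{isH}$ is also strongly continuous, I can pull $e^{isH}$ out of the limit to obtain $e^{isH_{1}}W=We^{isH}$.

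To pass to general Borel functions, fix vectors $f,g\in\mathcal{H}$ and introduce the two finite complex Borel measures $\mu(\Lambda)=\langle E_{H_{1}}\left(\Lambda\right)Wf,g\rangle$ and $\nu(\Lambda)=\langle WE_{H}\left(\Lambda\right)f,g\rangle$. Their Fourier--Stieltjes transforms are $\int e^{is\lambda}\,d\mu(\lambda)=\langle e^{isH_{1}}Wf,g\rangle$ and $\int e^{is\lambda}\,d\nu(\lambda)=\langle We^{isH}f,g\rangle$, which coincide for every $s$ by the group identity just proved. Uniqueness of the Fourier--Stieltjes transform forces $\mu=\nu$, and since $f,g$ and $\Lambda$ are arbitrary this yields $E_{H_{1}}\left(\Lambda\right)W=WE_{H}\left(\Lambda\right)$, the stated special case. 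The identity $\varphi\left(H_{1}\right)W=W\varphi\left(H\right)$ then follows for simple functions by linearity, for bounded Borel $\varphi$ by uniform approximation together with the boundedness of the spectral integrals, and for general Borel $\varphi$ on the appropriate domains by a standard limiting argument.

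I expect the genuine subtlety to lie not in the group step but in the clean extension to \emph{all} Borel $\varphi$, where one must track the domains of the possibly unbounded operators $\varphi\left(H\right)$ and $\varphi\left(H_{1}\right)$ and verify that the intertwining persists in the limit. The measure-theoretic uniqueness step neatly sidesteps this for the spectral-projection formulation, which is the version actually needed in the sequel, and the entire argument hinges on the commutation relation $e^{isH}P=Pe^{isH}$ supplied by Remark \ref{I1} together with the preservation of strong convergence under one-sided multiplication by a fixed bounded operator.
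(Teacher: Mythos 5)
Your proposal is correct and follows essentially the same route the paper relies on: the paper gives no in-text proof but appeals to a slight modification of the argument in \cite{Li}, which is exactly your two-step scheme of reindexing $t\mapsto t+s$ inside the strong limit (using $P_{ac}^{\infty}\left(H\right)\in\mathcal{M}\cap\mathcal{A}^{\prime}$ from Remark \ref{I1} to commute $e^{isH}$ past the projection) to obtain $e^{isH_{1}}W_{\pm}=W_{\pm}e^{isH}$, and then passing to Borel functions via uniqueness of the Fourier--Stieltjes transform of the spectral measures. Your closing remark correctly identifies the only genuine care point (domains for unbounded $\varphi$, where the clean statement is $W_{\pm}\varphi\left(H\right)\subseteq\varphi\left(H_{1}\right)W_{\pm}$), and the spectral-projection identity you derive is the form actually used later in the paper.
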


Different $J$ might give us different $W_{\pm},$ we will give a condition
about $J$ such that $W_{\pm}(H_{1},H;J)$ is an isometry on $P_{ac}^{\infty
}\left(  H\right)  $ below$.$ Its proof is similar to Proposition 2.1.3 in
\cite{Y1}, so we omit it.

\begin{theorem}
\label{I4}Let $H$, $H_{1}\in\mathcal{A}\left(  \mathcal{M}\right)  $ be a pair
of self-adjoint operators and $J$ be an operator in $\mathcal{M}$. If $W_{\pm
}(H_{1},H;J)$ exists, then $W_{\pm}(H_{1},H;J)$ is isometric on $P_{ac}%
^{\infty}\left(  H\right)  $ if the strong operator limit
\[
s.o.t\text{-}\lim_{t\rightarrow\pm\infty}\left(  J^{\ast}J-I\right)
e^{-itH}P_{ac}^{\infty}\left(  H\right)  =0
\]

\end{theorem}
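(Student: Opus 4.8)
The plan is to verify the isometry condition directly on vectors in the range of $P_{ac}^{\infty}\left(H\right)$. Fix $f\in\mathcal{H}_{ac}^{\infty}\left(H\right)$, so that $P_{ac}^{\infty}\left(H\right)f=f$, and abbreviate $W_{\pm}=W_{\pm}(H_{1},H;J)$ and $P=P_{ac}^{\infty}\left(H\right)$. By Definition \ref{I2}, the vectors $e^{itH_{1}}Je^{-itH}Pf$ converge in norm to $W_{\pm}f$ as $t\rightarrow\pm\infty$; since the Hilbert space norm is continuous under norm convergence, it suffices to compute $\lim_{t\rightarrow\pm\infty}\left\Vert e^{itH_{1}}Je^{-itH}Pf\right\Vert$ and show it equals $\left\Vert f\right\Vert$.

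First I would use that $e^{itH_{1}}$ is unitary to discard it, giving $\left\Vert e^{itH_{1}}Je^{-itH}Pf\right\Vert =\left\Vert Je^{-itH}Pf\right\Vert$. Then I would expand the squared norm by writing $J^{\ast}J=I+\left(J^{\ast}J-I\right)$:
\[
\left\Vert Je^{-itH}Pf\right\Vert ^{2}=\left\Vert e^{-itH}Pf\right\Vert ^{2}+\left\langle \left(J^{\ast}J-I\right)e^{-itH}Pf,\,e^{-itH}Pf\right\rangle .
\]
Because $e^{-itH}$ is unitary and $Pf=f$, the first term on the right equals $\left\Vert Pf\right\Vert ^{2}=\left\Vert f\right\Vert ^{2}$ for every $t$, so it contributes exactly $\left\Vert f\right\Vert ^{2}$ in the limit.

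For the cross term I would apply the Cauchy--Schwarz inequality,
\[
\left\vert \left\langle \left(J^{\ast}J-I\right)e^{-itH}Pf,\,e^{-itH}Pf\right\rangle \right\vert \leq\left\Vert \left(J^{\ast}J-I\right)e^{-itH}Pf\right\Vert \left\Vert f\right\Vert ,
\]
using again that $\left\Vert e^{-itH}Pf\right\Vert =\left\Vert f\right\Vert$. The hypothesis $s.o.t\text{-}\lim_{t\rightarrow\pm\infty}\left(J^{\ast}J-I\right)e^{-itH}P=0$, evaluated at the vector $f$, forces the first factor to tend to $0$, so the cross term vanishes in the limit. Combining the two terms yields $\left\Vert W_{\pm}f\right\Vert ^{2}=\left\Vert f\right\Vert ^{2}$, which is exactly isometry on $P_{ac}^{\infty}\left(H\right)$. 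I expect no serious obstacle here: the operators $e^{itH_{1}}$ and $e^{-itH}$ are unitary (hence bounded, so the unboundedness of $H,H_{1}$ as affiliated operators causes no domain trouble) and $J\in\mathcal{M}$ is bounded. The only point requiring care is the standard observation that strong operator convergence applied to a fixed vector yields norm convergence of that vector, which legitimizes passing to the limit of the norms; the remainder of the argument is a routine $\varepsilon$-free computation.
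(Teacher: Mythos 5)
Your proof is correct and is essentially the argument the paper intends: the paper omits the proof of this theorem, noting only that it is similar to Proposition 2.1.3 in Yafaev's book, and that standard argument is precisely your computation --- discard the unitary factor $e^{itH_{1}}$, split $J^{\ast}J=I+\left(J^{\ast}J-I\right)$ in the squared norm, and eliminate the cross term via Cauchy--Schwarz together with the hypothesis $s.o.t\text{-}\lim_{t\rightarrow\pm\infty}\left(J^{\ast}J-I\right)e^{-itH}P_{ac}^{\infty}\left(H\right)=0$. All the delicate points (norm convergence of $e^{itH_{1}}Je^{-itH}P_{ac}^{\infty}\left(H\right)f$ to $W_{\pm}f$ for each fixed $f$, and boundedness of the unitary groups despite $H,H_{1}$ being unbounded affiliated operators) are handled correctly, so there are no gaps.
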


\begin{lemma}
\label{M6}(Lemma 5.2.3 \cite{Li}) Suppose $H$ is a self-adjoint element in
$\mathcal{A}\left(  \mathcal{M}\right)  .$ Let $\left\{  \left(  E_{H}\left(
\lambda\right)  \right)  \right\}  _{\lambda\in\mathbb{R}}$ be the spectral
resolution of the identity for $H$ in $\mathcal{M}.$ If $S\in\mathcal{M}$
satisfies that the mapping $\lambda\longmapsto S^{\ast}E_{H}\left(
\lambda\right)  S$ from $\mathbb{R}$ into $\mathcal{M}$ is locally absolutely
continuous, then $R\left(  S\right)  ,$ the range projection of $S$ in
$\mathcal{M},$ is a subprojection of $P_{ac}^{\infty}\left(  H\right)  .$
\end{lemma}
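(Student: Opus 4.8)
The plan is to reduce the claim to the membership in $\mathcal{P}_{ac}^{\infty}\left(H\right)$ of a family of truncated range projections whose supremum is $R(S)$. I would write the polar decomposition $S=V\left\vert S\right\vert $ with $V\in\mathcal{M}$ a partial isometry and $\left\vert S\right\vert =\left(S^{\ast}S\right)^{1/2}$, and let $\left\{ F\left(t\right)\right\} $ denote the spectral resolution of $\left\vert S\right\vert $. For $\varepsilon>0$ set $Q_{\varepsilon}=F\left(\left[\varepsilon,\infty\right)\right)\in\mathcal{M}$, the spectral projection of $\left\vert S\right\vert $ on $\left[\varepsilon,\infty\right)$. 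Since $Q_{\varepsilon}\uparrow I-F\left(\left\{ 0\right\} \right)$, which is exactly the initial projection $V^{\ast}V$ of $V$, the projections $VQ_{\varepsilon}V^{\ast}$ increase to $VV^{\ast}=R(S)$. Because $P_{ac}^{\infty}\left(H\right)$ is a projection dominating every element of $\mathcal{P}_{ac}^{\infty}\left(H\right)$, it suffices to prove that each $VQ_{\varepsilon}V^{\ast}$ lies in $\mathcal{P}_{ac}^{\infty}\left(H\right)$; then $R(S)=\sup_{\varepsilon}VQ_{\varepsilon}V^{\ast}\leq P_{ac}^{\infty}\left(H\right)$.

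Fix $\varepsilon>0$ and put $T=SQ_{\varepsilon}$. Since $Q_{\varepsilon}$ commutes with $\left\vert S\right\vert $ one computes $\left\vert T\right\vert =\left\vert S\right\vert Q_{\varepsilon}\geq\varepsilon Q_{\varepsilon}$, so $\left\vert T\right\vert $ is bounded below on $Q_{\varepsilon}\mathcal{H}$ and admits a bounded inverse there; extending this inverse by $0$ yields $B\in\mathcal{M}$ with $B\left\vert T\right\vert =\left\vert T\right\vert B=Q_{\varepsilon}$. The polar decomposition $T=W\left\vert T\right\vert $ then has $W=TB\in\mathcal{M}$, $W^{\ast}W=Q_{\varepsilon}$ and $WW^{\ast}=R(T)=VQ_{\varepsilon}V^{\ast}$. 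I would next verify the identity (the key computation)
\[
R(T)\,E_{H}\left(\lambda\right)\,R(T)=W\bigl(W^{\ast}E_{H}\left(\lambda\right)W\bigr)W^{\ast}=C\,\bigl(S^{\ast}E_{H}\left(\lambda\right)S\bigr)\,C^{\ast},\qquad C=WB^{\ast}Q_{\varepsilon}\in\mathcal{M},
\]
using $W=TB=SQ_{\varepsilon}B$ together with $T^{\ast}E_{H}\left(\lambda\right)T=Q_{\varepsilon}\bigl(S^{\ast}E_{H}\left(\lambda\right)S\bigr)Q_{\varepsilon}$.

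With this identity in hand the conclusion is routine: local absolute continuity of a norm-valued map is preserved under left and right multiplication by fixed operators of $\mathcal{M}$, since $\left\Vert A\,g\left(\lambda\right)\,D\right\Vert \leq\left\Vert A\right\Vert \left\Vert D\right\Vert \left\Vert g\left(\lambda\right)\right\Vert $ controls the variation sums over disjoint intervals. Hence the hypothesis that $\lambda\longmapsto S^{\ast}E_{H}\left(\lambda\right)S$ is locally absolutely continuous forces $\lambda\longmapsto R(T)E_{H}\left(\lambda\right)R(T)$ to be locally absolutely continuous as well, i.e. $R(T)=VQ_{\varepsilon}V^{\ast}\in\mathcal{P}_{ac}^{\infty}\left(H\right)$. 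Letting $\varepsilon\downarrow0$ and taking the supremum finishes the proof.

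The step I expect to be the main obstacle is the passage through the truncations $Q_{\varepsilon}$: in general $\left\vert S\right\vert $ need not be bounded below (the range of $S$ need not be closed), so $R(S)$ cannot be handled in one stroke, and one must carefully produce the bounded inverse $B\in\mathcal{M}$ on each $Q_{\varepsilon}\mathcal{H}$ and check that $W=TB$ genuinely realizes the partial isometry of $T$ inside $\mathcal{M}$. Everything else—the monotone convergence $VQ_{\varepsilon}V^{\ast}\uparrow R(S)$ and the stability of local absolute continuity under bounded multipliers—is routine.
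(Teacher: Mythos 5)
The paper itself gives no proof of Lemma \ref{M6} — it is quoted verbatim from Lemma 5.2.3 of \cite{Li} — and your argument is correct and is essentially the standard truncation proof used there: polar decomposition $S=V\left\vert S\right\vert$, spectral cutoffs $Q_{\varepsilon}$ making $\left\vert S\right\vert$ boundedly invertible on $Q_{\varepsilon}\mathcal{H}$, the conjugation identity $R(T)E_{H}(\lambda)R(T)=C\,(S^{\ast}E_{H}(\lambda)S)\,C^{\ast}$ with $C\in\mathcal{M}$, stability of norm local absolute continuity under fixed bounded multipliers, and the monotone join $VQ_{\varepsilon}V^{\ast}\uparrow VV^{\ast}=R(S)\leq P_{ac}^{\infty}(H)$. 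One cosmetic simplification you could make: since $W=TB=V\left\vert S\right\vert Q_{\varepsilon}B=VQ_{\varepsilon}$, the partial isometry in your construction is just $VQ_{\varepsilon}$, which shortens the bookkeeping around $B$ and $C$ without changing anything essential.
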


Then we can get the following result.

\begin{proposition}
Let $H$, $H_{1}\in\mathcal{A}\left(  \mathcal{M}\right)  $ be a pair of
self-adjoint operators and $J$ be an operator in $\mathcal{M}$. If $W_{\pm
}\overset{\triangle}{=}W_{\pm}(H_{1},H;J)$ exists and the strong operator
limit
\[
s.o.t\text{-}\lim_{t\rightarrow\pm\infty}\left(  J^{\ast}J-I\right)
e^{-itH}P_{ac}^{\infty}\left(  H\right)  =0,
\]

Then $W_{\pm}W_{\pm}^{\ast}\leq P_{ac}^{\mathcal{1}}\left(  H_{1}\right)  $
\end{proposition}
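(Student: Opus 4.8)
The plan is to show that the range projection of $W_{\pm}$, which is $W_{\pm}W_{\pm}^{\ast}$, lands inside $P_{ac}^{\infty}(H_{1})$. Since the hypothesis guarantees (by Theorem \ref{I4}) that $W_{\pm}$ is isometric on $P_{ac}^{\infty}(H)$, the operator $W_{\pm}W_{\pm}^{\ast}$ is a genuine projection onto the range of $W_{\pm}$, and it suffices to identify this range with a subprojection of $P_{ac}^{\infty}(H_{1})$. The natural tool is Lemma \ref{M6}: I would set $S=W_{\pm}$ and try to verify that the map $\lambda\longmapsto W_{\pm}^{\ast}E_{H_{1}}(\lambda)W_{\pm}$ from $\mathbb{R}$ into $\mathcal{M}$ is locally absolutely continuous; Lemma \ref{M6} would then immediately give $R(W_{\pm})=W_{\pm}W_{\pm}^{\ast}\leq P_{ac}^{\infty}(H_{1})$, which is exactly the claim.

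The key computation is to rewrite $W_{\pm}^{\ast}E_{H_{1}}(\lambda)W_{\pm}$ using the intertwining relation from Theorem \ref{I3}. That theorem gives $E_{H_{1}}(\Lambda)W_{\pm}=W_{\pm}E_{H}(\Lambda)$ for every Borel set $\Lambda$. Applying this with $\Lambda=(-\infty,\lambda]$ yields
\begin{equation}
W_{\pm}^{\ast}E_{H_{1}}(\lambda)W_{\pm}=W_{\pm}^{\ast}W_{\pm}E_{H}(\lambda). \label{plan1}
\end{equation}
Because $W_{\pm}$ is isometric on $P_{ac}^{\infty}(H)$ and, by Theorem \ref{I3}, commutes with $E_{H}$ so that $W_{\pm}=W_{\pm}P_{ac}^{\infty}(H)$, the factor $W_{\pm}^{\ast}W_{\pm}$ restricts to $P_{ac}^{\infty}(H)$ on the relevant subspace, giving $W_{\pm}^{\ast}E_{H_{1}}(\lambda)W_{\pm}=P_{ac}^{\infty}(H)E_{H}(\lambda)P_{ac}^{\infty}(H)$ as the effective expression. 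I would then invoke the fact, recorded in the definition of $P_{ac}^{\infty}(H)$ together with Remark \ref{I1}, that $P_{ac}^{\infty}(H)\in\mathcal{M}\cap\mathcal{A}'$ and that $\lambda\longmapsto P_{ac}^{\infty}(H)E_{H}(\lambda)P_{ac}^{\infty}(H)$ is locally absolutely continuous, since $P_{ac}^{\infty}(H)$ is itself a norm absolutely continuous projection (the supremum of such projections remains in $\mathcal{P}_{ac}^{\infty}(H)$ by the results of \cite{Li}). The local absolute continuity then transfers through the unitary-like intertwining to the map $\lambda\longmapsto W_{\pm}^{\ast}E_{H_{1}}(\lambda)W_{\pm}$.

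The main obstacle I anticipate is the careful bookkeeping around isometry and the role of $P_{ac}^{\infty}(H)$ in equation \eqref{plan1}. The operator $W_{\pm}$ is only isometric on $P_{ac}^{\infty}(H)$, not globally, so $W_{\pm}^{\ast}W_{\pm}$ equals $P_{ac}^{\infty}(H)$ rather than the identity, and I must ensure the reduction of \eqref{plan1} is handled on the correct subspace so that Lemma \ref{M6} applies to $S=W_{\pm}$ with the hypothesis genuinely met. A secondary technical point is verifying that local absolute continuity of $\lambda\longmapsto P_{ac}^{\infty}(H)E_{H}(\lambda)P_{ac}^{\infty}(H)$ is preserved after conjugating by the bounded operators $W_{\pm}$ and $W_{\pm}^{\ast}$; this follows because for disjoint intervals $(a_{i},b_{i})$ one has $\|W_{\pm}^{\ast}(E_{H_{1}}(b_{i})-E_{H_{1}}(a_{i}))W_{\pm}\|\leq\|W_{\pm}\|^{2}\,\|P_{ac}^{\infty}(H)(E_{H}(b_{i})-E_{H}(a_{i}))P_{ac}^{\infty}(H)\|$, so the $\delta$--$\varepsilon$ estimate defining local absolute continuity is inherited up to the harmless factor $\|W_{\pm}\|^{2}$. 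Once this estimate is in place, Lemma \ref{M6} delivers $W_{\pm}W_{\pm}^{\ast}\leq P_{ac}^{\infty}(H_{1})$ and completes the proof.
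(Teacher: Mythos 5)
Your overall architecture is the same as the paper's (Theorem \ref{I4} to get $W_{\pm}^{\ast}W_{\pm}=P_{ac}^{\infty}\left(H\right)$, the intertwining relation of Theorem \ref{I3}, and Lemma \ref{M6} as the engine), but there is a genuine gap at the decisive step. You apply Lemma \ref{M6} with $S=W_{\pm}$, which requires that $\lambda\longmapsto W_{\pm}^{\ast}E_{H_{1}}\left(\lambda\right)W_{\pm}=P_{ac}^{\infty}\left(H\right)E_{H}\left(\lambda\right)P_{ac}^{\infty}\left(H\right)$ be locally absolutely continuous \emph{in norm}, and you justify this by asserting that $P_{ac}^{\infty}\left(H\right)$ is itself a norm absolutely continuous projection, i.e.\ that the supremum of the family $\mathcal{P}_{ac}^{\infty}\left(H\right)$ remains in $\mathcal{P}_{ac}^{\infty}\left(H\right)$. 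That assertion is not in \cite{Li} and is false in general. Take $H$ to be multiplication by $x$ on $L^{2}\left[0,1\right]$: every vector is absolutely continuous, so every rank-one projection lies in $\mathcal{P}_{ac}^{\infty}\left(H\right)$ and (consistently with Corollary \ref{M8}) $P_{ac}^{\infty}\left(H\right)=P_{ac}\left(H\right)=I$; yet $\left\Vert E_{H}\left(b\right)-E_{H}\left(a\right)\right\Vert =1$ for all $0\leq a<b\leq1$, so $\lambda\longmapsto E_{H}\left(\lambda\right)$ is not norm locally absolutely continuous and $I\notin\mathcal{P}_{ac}^{\infty}\left(H\right)$. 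Norm absolute continuity is a uniform condition that does not pass to joins, which is precisely why the distinction between $\mathcal{P}_{ac}^{\infty}\left(H\right)$ and its supremum $P_{ac}^{\infty}\left(H\right)$ matters throughout this theory. Your fallback estimate $\left\Vert W_{\pm}^{\ast}\left(E_{H_{1}}\left(b_{i}\right)-E_{H_{1}}\left(a_{i}\right)\right)W_{\pm}\right\Vert \leq\left\Vert W_{\pm}\right\Vert ^{2}\left\Vert P_{ac}^{\infty}\left(H\right)\left(E_{H}\left(b_{i}\right)-E_{H}\left(a_{i}\right)\right)P_{ac}^{\infty}\left(H\right)\right\Vert$ does not help, because the right-hand side is exactly the quantity that need not be small.

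The paper's proof repairs this by localizing: for each \emph{individual} $P\in\mathcal{P}_{ac}^{\infty}\left(H\right)$ one computes, using Theorem \ref{I3} and $W_{\pm}^{\ast}W_{\pm}=P_{ac}^{\infty}\left(H\right)$,
\[
\left(W_{\pm}P\right)^{\ast}E_{H_{1}}\left(\Lambda\right)\left(W_{\pm}P\right)=PW_{\pm}^{\ast}W_{\pm}E_{H}\left(\Lambda\right)P=PE_{H}\left(\Lambda\right)P,
\]
where now the local absolute continuity holds \emph{by the definition} of $P\in\mathcal{P}_{ac}^{\infty}\left(H\right)$, so Lemma \ref{M6} applies to $S=W_{\pm}P$ and gives $R\left(W_{\pm}P\right)\leq P_{ac}^{\infty}\left(H_{1}\right)$. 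One then recovers the global statement by taking joins: since $W_{\pm}=W_{\pm}P_{ac}^{\infty}\left(H\right)$ and $P_{ac}^{\infty}\left(H\right)=\vee\left\{P:P\in\mathcal{P}_{ac}^{\infty}\left(H\right)\right\}$, the range projection satisfies $R\left(W_{\pm}\right)=\vee_{P}R\left(W_{\pm}P\right)\leq P_{ac}^{\infty}\left(H_{1}\right)$, and since $W_{\pm}W_{\pm}^{\ast}$ is the range projection of the partial isometry $W_{\pm}$, the claim follows. So your proof becomes correct if you replace the single application of Lemma \ref{M6} to $W_{\pm}$ by this family of applications to $W_{\pm}P$ followed by the supremum argument; as written, the step claiming norm absolute continuity for $P_{ac}^{\infty}\left(H\right)$ itself would fail.
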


\begin{proof}
If
\[
s.o.t\text{-}\lim_{t\rightarrow\pm\infty}\left(  J^{\ast}J-I\right)
e^{-itH}P_{ac}^{\infty}\left(  H\right)  =0,
\]
then $W_{\pm}^{\ast}W_{\pm}=P_{ac}^{\infty}\left(  H\right)  $ by Theorem
\ref{I4}. From Theorem \ref{I3}, for any $P\in\mathcal{P}_{ac}^{\mathcal{1}%
}\left(  H\right)  $ and any Borel set $\Lambda\subseteq\mathbb{R},$
\[
\left(  W_{\pm}P\right)  ^{\ast}E_{H_{1}}\left(  \Lambda\right)  \left(
W_{\pm}P\right)  =PW_{\pm}^{\ast}E_{H_{1}}\left(  \Lambda\right)  W_{\pm
}P=PW_{\pm}^{\ast}W_{\pm}E_{H}\left(  \Lambda\right)  P=PE_{H}\left(
\Lambda\right)  P.
\]
It implies that the mapping $\lambda\rightarrow\left(  W_{\pm}P\right)
^{\ast}E_{H_{1}}\left(  \lambda\right)  \left(  W_{\pm}P\right)  $ from
$\mathbb{R}$ into $\mathcal{M}$ is locally absolutely continuous. Hence the
range projection $R\left(  W_{\pm}P\right)  \leq P_{ac}^{\mathcal{1}}\left(
H_{1}\right)  $ by Lemma \ref{M6}$.$ Therefore $R\left(  W_{\pm}\right)  \leq
P_{ac}^{\mathcal{1}}\left(  H_{1}\right)  $ by the fact that $W_{\pm}%
P_{ac}^{\mathcal{1}}\left(  H\right)  =W_{\pm}.$ Hence $W_{\pm}W_{\pm}^{\ast
}\leq P_{ac}^{\mathcal{1}}\left(  H_{1}\right)  .$
\end{proof}

\section{\bigskip Main Results}

\subsection{Characterization of norm absolutely continuous projections in
$\mathcal{M}$}

The cut off function $\omega_{n}$ is given in \cite{Li}. We refer the reader
to \cite{Li} for its definition. Here we only recall its useful property.

\begin{lemma}
\label{M5}(Lemma 4.2.2 in \cite{Li}) Suppose $H$ is a self-adjoint element in
$\mathcal{A}\left(  \mathcal{M}\right)  .$ For each $n\in\mathbb{N}$ and
cut-off function $\omega_{n},$ let
\[
\omega_{n}\left(  H\right)  =\int_{\mathbb{R}}\omega_{n}\left(  t\right)
dE_{H}\left(  t\right)  .
\]
Then $\omega_{n}\left(  H\right)  \in\mathcal{M}$ and%
\[
\omega_{n}\left(  H\right)  \rightarrow I\text{ in strong operator topology,
as }n\rightarrow\mathcal{1}.
\]

\end{lemma}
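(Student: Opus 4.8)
The plan is to handle both assertions through the Borel functional calculus for the (possibly unbounded) self-adjoint operator $H$, invoking only the standard features of a cut-off function from \cite{Li}: each $\omega_n$ is a bounded Borel function on $\mathbb{R}$, the family $\{\omega_n\}$ is uniformly bounded, and $\omega_n(t)\rightarrow 1$ pointwise as $n\rightarrow\infty$. With these properties in hand, neither claim requires anything beyond the spectral theorem and dominated convergence.

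First I would establish that $\omega_n(H)\in\mathcal{M}$. Since $H$ is affiliated with $\mathcal{M}$, every spectral projection $E_H(\Lambda)$ lies in $\mathcal{M}$, and hence so does every finite linear combination of such projections. Approximating the bounded Borel function $\omega_n$ uniformly by simple functions, the associated operators converge in operator norm to $\omega_n(H)$; as $\mathcal{M}$ is norm closed, $\omega_n(H)\in\mathcal{M}$. Equivalently, one may observe that each $E_H(\Lambda)$ commutes with every element of $\mathcal{M}'$, so the same holds for $\omega_n(H)$, and the bicommutant theorem gives $\omega_n(H)\in\mathcal{M}''=\mathcal{M}$.

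Next I would prove the strong convergence $\omega_n(H)\rightarrow I$. Fixing $f\in\mathcal{H}$ and using the spectral resolution, write
\[
\left\Vert \left(\omega_n(H)-I\right)f\right\Vert ^2=\int_{\mathbb{R}}\left\vert \omega_n(t)-1\right\vert ^2\,d\langle E_H(t)f,f\rangle .
\]
The measure $d\langle E_H(t)f,f\rangle$ is a finite positive Borel measure of total mass $\left\Vert f\right\Vert ^2$. The integrands $\left\vert \omega_n(t)-1\right\vert ^2$ are uniformly bounded and tend to $0$ pointwise, so the dominated convergence theorem forces the integral to $0$. Thus $\omega_n(H)f\rightarrow f$ for every $f\in\mathcal{H}$, which is precisely strong operator convergence to $I$.

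I do not expect a genuine obstacle here; the only point demanding care is that $H$ need not be bounded, so $\omega_n(H)$ must be understood through the unbounded spectral calculus rather than any series expansion. This is exactly what the cut-off accomplishes: boundedness of $\omega_n$ guarantees $\omega_n(H)\in\mathcal{B}(\mathcal{H})$, after which both assertions collapse to the finiteness of the scalar spectral measures together with the pointwise limit $\omega_n\rightarrow 1$ supplied by \cite{Li}.
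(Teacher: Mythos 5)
Your proof is correct and is essentially the intended argument: the paper states this lemma without proof, importing it as Lemma 4.2.2 of \cite{Li}, and the argument there is the same reduction to the Borel functional calculus --- $\omega_{n}\left(H\right)\in\mathcal{M}$ because bounded Borel functions of an operator affiliated with $\mathcal{M}$ remain in $\mathcal{M}$ (e.g.\ via the bicommutant observation you make), and $\omega_{n}\left(H\right)\rightarrow I$ strongly by dominated convergence against the finite measures $d\langle E_{H}\left(t\right)f,f\rangle$. The only point to flag is that you posited, rather than verified, the defining features of the cut-off functions (uniform boundedness and pointwise convergence $\omega_{n}\rightarrow1$); since the present paper deliberately omits the definition of $\omega_{n}$ and these are exactly the properties of the construction in \cite{Li} that make the statement true, this is a reasonable and harmless assumption.
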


\begin{remark}
\label{M26} Let $H$ be a self-adjoint element in $\mathcal{A}\left(
\mathcal{M}\right)  $ and $P\in\mathcal{P}_{ac}^{\infty}\left(  H\right)  .$
From Lemma 4.2.3 (vi) in \cite{Li},
\begin{equation}
\int_{\mathbb{R}}\left\Vert P\omega_{n}\left(  H\right)  e^{-itH}f\right\Vert
^{2}dt\leq\frac{n}{2\pi}\left\Vert f\right\Vert \label{g70}%
\end{equation}
for any $f\in\mathcal{H}$ and $n\in\mathbb{N}.$ Then%
\[
\sup_{\left\Vert f\right\Vert =1}\int_{\mathbb{R}}\left\Vert P\omega
_{n}\left(  H\right)  e^{-itH}f\right\Vert ^{2}dt\leq\frac{n}{2\pi}.
\]
By Theorem \ref{K1} and Definition \ref{K2}, we have $G=P\omega_{n}\left(
H\right)  $ is $H$-smooth for $n\in\mathbb{N}.$
\end{remark}

Next theorem is the main result in this subsection.

\begin{theorem}
\label{M7}Suppose $H$ is a self-adjoint element in $\mathcal{A}\left(
\mathcal{M}\right)  .$ Then
\[
P_{ac}^{\infty}\left(  H\right)  =\vee\left\{  R\left(  G^{\ast}\right)
:G\in\mathcal{M}\text{ is }H\text{-smooth}\right\}
\]
where $R\left(  G^{\ast}\right)  $ is the range projection of $G^{\ast}.$
\end{theorem}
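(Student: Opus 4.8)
The plan is to prove the equality of two projections by establishing both inequalities separately. Write $Q=\vee\{R(G^{\ast}):G\in\mathcal{M}\text{ is }H\text{-smooth}\}$. I must show $P_{ac}^{\infty}(H)\leq Q$ and $Q\leq P_{ac}^{\infty}(H)$.

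\textbf{The inclusion $P_{ac}^{\infty}(H)\leq Q$.} First I would show that every projection $P\in\mathcal{P}_{ac}^{\infty}(H)$ satisfies $P\leq Q$; since $P_{ac}^{\infty}(H)$ is the supremum of all such $P$, this gives the inequality. Fix $P\in\mathcal{P}_{ac}^{\infty}(H)$. The key tool is Remark \ref{M26}, which tells us that $G_n:=P\omega_n(H)$ is $H$-smooth for every $n\in\mathbb{N}$. Hence $R(G_n^{\ast})\leq Q$ for all $n$, and so $\bigvee_n R(G_n^{\ast})\leq Q$. It remains to check that $P\leq\bigvee_n R(G_n^{\ast})$. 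Since $G_n^{\ast}=\omega_n(H)P$, the range of $G_n^{\ast}$ contains $\omega_n(H)Px$ for every $x$, and by Lemma \ref{M5} we have $\omega_n(H)Px\to Px$ in the strong operator topology as $n\to\infty$. Therefore $Px$ lies in the closed span of $\bigcup_n R(G_n^{\ast})$ for every $x$, which yields $R(P)=P\leq\bigvee_n R(G_n^{\ast})\leq Q$. Taking the supremum over all $P\in\mathcal{P}_{ac}^{\infty}(H)$ gives $P_{ac}^{\infty}(H)\leq Q$.

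\textbf{The inclusion $Q\leq P_{ac}^{\infty}(H)$.} For this direction it suffices to show $R(G^{\ast})\leq P_{ac}^{\infty}(H)$ for each fixed $H$-smooth $G\in\mathcal{M}$. The natural strategy is to invoke Lemma \ref{M6} with $S=G^{\ast}$: I must verify that the map $\lambda\longmapsto (G^{\ast})^{\ast}E_H(\lambda)G^{\ast}=GE_H(\lambda)G^{\ast}$ from $\mathbb{R}$ into $\mathcal{M}$ is locally absolutely continuous. This is exactly the point where $H$-smoothness enters quantitatively: condition (5) of Theorem \ref{K1} asserts that $\sup_{\Lambda}\lVert GE_H(\Lambda)G^{\ast}\rVert/\lvert\Lambda\rvert=\gamma_5^2<\infty$. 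I would use this to estimate, for disjoint intervals $(a_i,b_i)$, the norm $\lVert GE_H(b_i)G^{\ast}-GE_H(a_i)G^{\ast}\rVert=\lVert GE_H((a_i,b_i])G^{\ast}\rVert\leq\gamma_5^2(b_i-a_i)$, whence $\sum_i\lVert GE_H(b_i)G^{\ast}-GE_H(a_i)G^{\ast}\rVert\leq\gamma_5^2\sum_i(b_i-a_i)$. Choosing $\delta=\varepsilon/\gamma_5^2$ then verifies the local absolute continuity condition directly. Lemma \ref{M6} then gives $R(G^{\ast})\leq P_{ac}^{\infty}(H)$, and taking the supremum over all $H$-smooth $G$ yields $Q\leq P_{ac}^{\infty}(H)$.

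\textbf{Main obstacle.} The routine parts are the two convergence and estimate computations; the step I expect to require the most care is the clean identification $GE_H(\Lambda)G^{\ast}=GE_H(b)G^{\ast}-GE_H(a)G^{\ast}$ for $\Lambda=(a,b]$ together with the passage from the uniform bound $\gamma_5^2$ (a supremum over \emph{all} Borel sets $\Lambda$) to the finite additive estimate over a disjoint collection of subintervals. One must confirm that condition (5) of Theorem \ref{K1} is stated for arbitrary measurable $\Lambda$ so that it applies to each subinterval, and that the spectral measure differences translate correctly into operator norm bounds. Once that translation is in place, local absolute continuity follows immediately and Lemma \ref{M6} closes the argument.
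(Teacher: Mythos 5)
Your proposal is correct and follows essentially the same route as the paper's proof: the inclusion $P_{ac}^{\infty}(H)\leq Q$ via the $H$-smoothness of $P\omega_{n}(H)$ (Remark \ref{M26}) together with $\omega_{n}(H)P\to P$ strongly (Lemma \ref{M5}), and the reverse inclusion via condition (5) of Theorem \ref{K1} feeding the local absolute continuity hypothesis of Lemma \ref{M6} applied to $S=G^{\ast}$. Your explicit $\left\Vert GE_{H}\left(  (a_{i},b_{i}]\right)  G^{\ast}\right\Vert \leq\gamma_{5}^{2}\left(  b_{i}-a_{i}\right)$ estimate merely spells out what the paper leaves implicit.
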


\begin{proof}
By Remark \ref{M26}, we have $G=P\omega_{n}\left(  H\right)  $ is $H$-smooth.
Hence from Theorem \ref{K1},
\[
\sup_{\left\Vert x\right\Vert =1}\frac{1}{2\pi}\int_{\mathbb{R}}\left\Vert
P\omega_{n}\left(  H\right)  e^{-itH}x\right\Vert ^{2}dt=\sup_{\Lambda
\subseteq\mathbb{R}}\frac{\left\Vert P\omega_{n}\left(  H\right)  E_{H}\left(
\Lambda\right)  \omega_{n}\left(  H\right)  P\right\Vert }{\left\vert
\Lambda\right\vert }\leq\frac{n}{\left(  2\pi\right)  ^{2}}.
\]
Therefore $\lambda\rightarrow P\omega_{n}\left(  H\right)  E_{H}\left(
\lambda\right)  \omega_{n}\left(  H\right)  P$ is locally absolutely
continuous$.$ Then by Lemma \ref{M6}, we have $R\left(  \omega_{n}\left(
H\right)  P\right)  \leq P_{ac}^{\infty}\left(  H\right)  $ for every
$n\in\mathbb{N}$ and $P\in\mathcal{P}_{ac}^{\infty}\left(  H\right)  .$ Hence
\[
P=\vee_{n}R\left(  \omega_{n}\left(  H\right)  P\right)  =\vee_{n}\left\{
R\left(  G^{\ast}\right)  :G=P\omega_{n}\left(  H\right)  \right\}
\]
by Lemma \ref{M5}, now we conclude that $P_{ac}^{\infty}\left(  H\right)
\leq\vee\left\{  R\left(  G^{\ast}\right)  :G\in\mathcal{M}\text{ is
}H\text{-smooth}\right\}  .$

On the other hand, if $G\in\mathcal{M}$ is $H$-smooth, then by Theorem
\ref{K1} (5) we have $\lambda\rightarrow GE_{\lambda}G^{\ast}$ is locally
absolutely continuous. Therefore $R\left(  G^{\ast}\right)  \leq
P_{a.c}^{\infty}\left(  H\right)  $ by Lemma \ref{M6}. Hence
\[
\vee\left\{  R\left(  G^{\ast}\right)  :G\text{ is }H\text{-smooth in
}\mathcal{M}\right\}  \leq P_{a.c}^{\infty}\left(  H\right)  .
\]
This completes the proof.
\end{proof}

Let $P_{ac}\left(  H\right)  $ be the projection from $\mathcal{H}$ onto
$\mathcal{H}_{ac}\left(  H\right)  .$ In \cite{Li}, it has been shown that
$P_{ac}\left(  H\right)  =P_{ac}^{\infty}\left(  H\right)  $ for a densely
defined self-adjoint operator $H$ (therefore $H\in\mathcal{A}\left(
\mathcal{B}\left(  \mathcal{H}\right)  \right)  .$ Then we can get the
following corollary.

\begin{corollary}
\label{M8}Let $H$ be a densely defined self-adjoint operator on $\mathcal{H}.$
Then
\begin{align*}
P_{ac}\left(  H\right)   &  =P_{ac}^{\infty}\left(  H\right) \\
&  =\vee\left\{  R\left(  G^{\ast}\right)  :G\in\mathcal{B}\left(
\mathcal{H}\right)  \text{ is }H\text{-smooth}\right\}  .
\end{align*}

\end{corollary}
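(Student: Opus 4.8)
The plan is to obtain the corollary by specializing Theorem \ref{M7} to the case $\mathcal{M}=\mathcal{B}\left(\mathcal{H}\right)$ and then combining the result with the already-quoted equality $P_{ac}\left(H\right)=P_{ac}^{\infty}\left(H\right)$. First I would verify that $\mathcal{B}\left(\mathcal{H}\right)$, for $\mathcal{H}$ separable and infinite dimensional, satisfies every standing hypothesis imposed on $\mathcal{M}$ throughout the paper: it is a type $\mathrm{I}_{\infty}$ factor, hence properly infinite and semifinite; the canonical operator trace $\mathrm{Tr}$ is a faithful normal tracial weight; and separability of $\mathcal{H}$ forces countable decomposability. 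Consequently every result established for a general $\mathcal{M}$, in particular Theorem \ref{M7}, applies verbatim with $\mathcal{M}=\mathcal{B}\left(\mathcal{H}\right)$.

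Next, since the commutant of $\mathcal{B}\left(\mathcal{H}\right)$ is $\mathbb{C}I$, every densely defined self-adjoint operator $H$ on $\mathcal{H}$ is automatically affiliated with $\mathcal{B}\left(\mathcal{H}\right)$, so that $H\in\mathcal{A}\left(\mathcal{B}\left(\mathcal{H}\right)\right)$ and the spectral projections $E_{H}\left(\lambda\right)$ already lie in $\mathcal{B}\left(\mathcal{H}\right)$. Applying Theorem \ref{M7} in this algebra yields
\[
P_{ac}^{\infty}\left(H\right)=\vee\left\{R\left(G^{\ast}\right):G\in\mathcal{B}\left(\mathcal{H}\right)\text{ is }H\text{-smooth}\right\}.
\]
It then remains only to invoke the identity $P_{ac}\left(H\right)=P_{ac}^{\infty}\left(H\right)$ for densely defined self-adjoint $H$, established in \cite{Li} and recalled immediately before the corollary, and to chain the two equalities together.

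The one point requiring care, and the natural candidate for the main obstacle even though it is a matter of matching definitions rather than a genuine difficulty, is confirming that the notion of $H$-smoothness used inside Theorem \ref{M7} coincides with the classical Kato smoothness appearing in the corollary. This is immediate from Definition \ref{K2} together with Theorem \ref{K1}: the five equivalent conditions defining $\gamma_{j}\left(G\right)$ involve only $G$, $H$, their resolvents, and the unitary group on $\mathcal{H}$, with no reference to the ambient algebra, so the constraint $G\in\mathcal{M}$ in Theorem \ref{M7} becomes exactly $G\in\mathcal{B}\left(\mathcal{H}\right)$. Likewise the join $\vee$ of range projections is computed in $\mathcal{B}\left(\mathcal{H}\right)$, which is the ordinary supremum of closed subspaces. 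With these identifications the two displayed characterizations are literally the same statement, and the proof is complete.
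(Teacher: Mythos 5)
Your proposal is correct and follows essentially the same route as the paper, which states the corollary without a separate proof precisely because it is the specialization of Theorem \ref{M7} to $\mathcal{M}=\mathcal{B}\left(\mathcal{H}\right)$ combined with the equality $P_{ac}\left(H\right)=P_{ac}^{\infty}\left(H\right)$ from \cite{Li} quoted immediately beforehand. Your additional verifications (that $\mathcal{B}\left(\mathcal{H}\right)$ meets the standing hypotheses on $\mathcal{M}$, that affiliation is automatic, and that the smoothness notions coincide) are exactly the routine checks the paper leaves implicit.
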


\begin{corollary}
\label{M9}Suppose $H$ is a self-adjoint affiliated with $\mathcal{M}.$ Then
$P_{ac}^{\infty}\left(  H\right)  \neq0$ if and only if there is at least one
$H$-smooth operator in $\mathcal{M}.$
\end{corollary}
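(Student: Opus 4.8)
The plan is to read this off directly from Theorem \ref{M7}, which already identifies $P_{ac}^{\infty}(H)$ as the join $\vee\{R(G^{\ast}):G\in\mathcal{M}\text{ is }H\text{-smooth}\}$ of the range projections of the adjoints of the $H$-smooth operators in $\mathcal{M}$. The only point that needs care is a convention: since the zero operator is trivially $H$-smooth (one has $\gamma_{H}(0)=0$ in each of the five equivalent formulations of Theorem \ref{K1}), the phrase \emph{at least one $H$-smooth operator} must be understood as \emph{at least one nonzero $H$-smooth operator}, as otherwise the statement would be vacuous. I would prove both implications under this reading.

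For the forward implication, I would suppose $P_{ac}^{\infty}(H)\neq0$. A supremum of projections in $\mathcal{M}$ is zero precisely when each of the projections in the family is zero, so if every $H$-smooth $G$ satisfied $R(G^{\ast})=0$, then the join in Theorem \ref{M7} would vanish, contradicting $P_{ac}^{\infty}(H)\neq0$. Hence there exists an $H$-smooth $G\in\mathcal{M}$ with $R(G^{\ast})\neq0$, and in particular $G\neq0$, which is the desired nonzero $H$-smooth operator.

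For the reverse implication, I would take a nonzero $H$-smooth operator $G\in\mathcal{M}$. Then $G^{\ast}\neq0$, so its range projection satisfies $R(G^{\ast})\neq0$, while Theorem \ref{M7} gives $R(G^{\ast})\leq P_{ac}^{\infty}(H)$. Thus $P_{ac}^{\infty}(H)\geq R(G^{\ast})\neq0$, completing the argument.

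There is essentially no obstacle beyond this bookkeeping, since the entire analytic content has already been absorbed into Theorem \ref{M7}; the hard part is genuinely behind us. If one preferred to avoid invoking the convention about the zero operator in the forward direction, one could instead argue constructively: pick any nonzero $P\in\mathcal{P}_{ac}^{\infty}(H)$, observe by Remark \ref{M26} that each $G=P\omega_{n}(H)$ is $H$-smooth, and use Lemma \ref{M5} (so that $\omega_{n}(H)\to I$ strongly) to conclude that $P\omega_{n}(H)\neq0$ for some $n$, thereby exhibiting an explicit nonzero $H$-smooth operator in $\mathcal{M}$.
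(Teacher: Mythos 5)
Your proposal is correct and in substance coincides with the paper's (one-line) proof: the paper handles the reverse implication exactly as you do, by citing Theorem \ref{M7}, and handles the forward implication by your alternative constructive route --- given $P_{ac}^{\infty}\left(H\right)\neq0$ it picks a projection $P\in\mathcal{P}_{ac}^{\infty}\left(H\right)$ and observes, via the argument inside the proof of Theorem \ref{M7} (i.e.\ Remark \ref{M26}), that $P\omega_{n}\left(H\right)$ is $H$-smooth. Where you improve on the paper is in two small points it leaves implicit. First, since the zero operator is trivially $H$-smooth (it is $H$-bounded and $\gamma_{H}\left(0\right)=0$ in every formulation of Theorem \ref{K1}), the corollary is only non-vacuous under your reading ``at least one \emph{nonzero} $H$-smooth operator,'' and your proof tracks nonvanishing throughout: in your primary forward argument via the fact that a join of projections vanishes if and only if every member does, and in the constructive variant via Lemma \ref{M5}, which gives $P\omega_{n}\left(H\right)\rightarrow P\neq0$ strongly and hence $P\omega_{n}\left(H\right)\neq0$ for $n$ large --- the paper never verifies that its smooth operator is nonzero, nor that the chosen $P$ may be taken nonzero. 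Second, your primary forward argument reads everything off the \emph{statement} of Theorem \ref{M7} alone, which is marginally cleaner than the paper's appeal to the interior of that theorem's proof; the two arguments are otherwise mathematically equivalent, and both directions rest on the same analytic content already absorbed into Theorem \ref{M7}.
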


\begin{proof}
If $P_{ac}^{\infty}\left(  H\right)  \neq0,$ then there is a projection
$P\in\mathcal{P}_{ac}^{\infty}\left(  H\right)  .$ By the argument in the
proof of Theorem \ref{M7}, we know that $P\omega_{n}\left(  H\right)  $ is
$H$-smooth operator in $\mathcal{M}.$ The other direction is clear by Theorem
\ref{M7}.
\end{proof}

\subsection{A Stationary approach in $\mathcal{M}$}

\bigskip\ \ The stationary approach in a Hilbert space $\mathcal{H}$ is based
on several variations of wave operators, such as weak wave operators and
stationary wave operators (see \cite{Y1}). So we will also give the
definitions of these variations in $\mathcal{M}.$ For the reader who is
familiar with the general scattering theory, the following definitions are
natural extensions of the corresponding definitions in the general scattering
theory. For the reader who is not familiar with this area, we refer the reader
to the Appendix of this paper for the details.

\begin{definition}
\label{R1} Let $H$, $H_{1}\in\mathcal{A}\left(  \mathcal{M}\right)  $ be a
pair of self-adjoint operators and $J$ be an operator in $\mathcal{M}$. The
generalized weak wave operator for a pair of self-adjoint operators $H$,
$H_{1}$ and $J$ is the operator
\[
\widetilde{W}_{\pm}(H_{1},H;J)=w.o.t\text{-}\lim_{t\rightarrow\pm\infty}%
P_{ac}^{\infty}\left(  H_{1}\right)  e^{itH_{1}}Je^{-itH}P_{ac}^{\infty
}\left(  H\right)
\]
provided that $w.o.t$ (weak operator topology) limit exists.
\end{definition}

\ Furthermore, we also have
\begin{equation}
\widetilde{W}_{\pm}(H,H_{1};J^{\ast})=\widetilde{W}_{\pm}^{\ast}(H_{1},H;J)
\label{g33}%
\end{equation}
if $\widetilde{W}_{\pm}(H_{1},H;J)$ exists.

\begin{definition}
\label{R5}Let $J$ be an operator in $\mathcal{M}$, $H$, $H_{1}$ be
self-adjoint operators in $\mathcal{A}\left(  \mathcal{M}\right)  .$ If for
any pair of elements $f$ and $f_{1}$ in $\mathcal{H},$
\[
\lim_{\varepsilon\rightarrow0}\frac{\varepsilon}{\pi}\left\langle
JR_{H}\left(  \lambda\pm i\varepsilon\right)  P_{ac}^{\infty}\left(  H\right)
f,R_{H_{1}}\left(  \lambda\pm i\varepsilon\right)  P_{ac}^{\infty}\left(
H_{1}\right)  f_{1}\right\rangle
\]
exists for a.e. $\lambda\in\mathbb{R},$ then the generalized stationary wave
operator is defined as
\[
\left\langle \mathcal{U}_{\pm}\left(  H_{1},H;J\right)  f,f_{1}\right\rangle
=\int_{-\infty}^{\infty}\lim_{\varepsilon\rightarrow0}\frac{\varepsilon}{\pi
}\left\langle JR_{H}\left(  \lambda\pm i\varepsilon\right)  P_{ac}^{\infty
}\left(  H\right)  f,R_{H_{1}}\left(  \lambda\pm i\varepsilon\right)
P_{ac}^{\infty}\left(  H_{1}\right)  f_{1}\right\rangle d\lambda.
\]

\end{definition}

From the definition of $\mathcal{U}_{\pm}\left(  H_{1},H;J\right)  $, it is
clear that
\begin{equation}
P_{ac}^{\infty}\left(  H_{1}\right)  \mathcal{U}_{\pm}\left(  H_{1}%
,H;J\right)  =\mathcal{U}_{\pm}\left(  H_{1},H;J\right)  \label{G70}%
\end{equation}
if $\mathcal{U}_{\pm}\left(  H_{1},H;J\right)  $ exists.

Note $\mathcal{U}_{\pm}\left(  H_{1},H;J\right)  $ is given in terms of the
resolvents of operators $H$ and $H_{1}$ which obviously have nothing to do
with time variable $t$ apparently$.$ So $\mathcal{U}_{\pm}\left(
H_{1},H;J\right)  $ is a key concept in the stationary approach in
$\mathcal{M}.$ Actually, to check the existence of $\mathcal{U}_{\pm}\left(
H_{1},H;J\right)  $ is one of the key steps to show the existence of $W_{\pm}$
in a stationary method.

\begin{corollary}
\label{R8}Let $H$, $H_{1}\in\mathcal{A}\left(  \mathcal{M}\right)  $ be a pair
of self-adjoint operators and $J$ be an operator in $\mathcal{M}$. If
$\mathcal{U}_{\pm}\left(  H_{1},H;J\right)  $ exists, then
\[
\mathcal{U}_{\pm}\left(  H,H_{1};J^{\ast}\right)  =\mathcal{U}_{\pm}^{\ast
}\left(  H_{1},H;J\right)  .
\]

\end{corollary}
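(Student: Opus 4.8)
The plan is to prove the identity by directly unwinding Definition \ref{R5} and exploiting the sesquilinearity of the inner product; no new analytic input beyond the existence hypothesis is required. Fix arbitrary vectors $f,f_{1}\in\mathcal{H}$. By the definition of the Hilbert-space adjoint,
\[
\left\langle \mathcal{U}_{\pm}^{\ast}\left(  H_{1},H;J\right)  f,f_{1}\right\rangle =\overline{\left\langle \mathcal{U}_{\pm}\left(  H_{1},H;J\right)  f_{1},f\right\rangle },
\]
so the first step is to substitute the defining integral of $\mathcal{U}_{\pm}\left(  H_{1},H;J\right)$ (with $f_{1}$ and $f$ playing the roles of $f$ and $f_{1}$) into the right-hand side.

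Next I would push the complex conjugation through the integral $\int_{-\infty}^{\infty}d\lambda$ and through the inner limit $\lim_{\varepsilon\rightarrow0}$. This is legitimate because conjugation is a continuous (isometric) self-map of $\mathbb{C}$, so it commutes with the a.e.\ limit in $\varepsilon$ guaranteed by the hypothesis that $\mathcal{U}_{\pm}\left(  H_{1},H;J\right)$ exists, and it passes through the Lebesgue integral by conjugate-linearity. The integrand then becomes
\[
\overline{\frac{\varepsilon}{\pi}\left\langle JR_{H}\left(  \lambda\pm i\varepsilon\right)  P_{ac}^{\infty}\left(  H\right)  f_{1},R_{H_{1}}\left(  \lambda\pm i\varepsilon\right)  P_{ac}^{\infty}\left(  H_{1}\right)  f\right\rangle }.
\]

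Now I would apply the two elementary identities $\overline{\left\langle a,b\right\rangle }=\left\langle b,a\right\rangle$ and $\left\langle x,Jy\right\rangle =\left\langle J^{\ast}x,y\right\rangle$ to the inner product above (the scalar $\varepsilon/\pi$ being real). This rewrites the integrand as
\[
\frac{\varepsilon}{\pi}\left\langle J^{\ast}R_{H_{1}}\left(  \lambda\pm i\varepsilon\right)  P_{ac}^{\infty}\left(  H_{1}\right)  f,R_{H}\left(  \lambda\pm i\varepsilon\right)  P_{ac}^{\infty}\left(  H\right)  f_{1}\right\rangle ,
\]
where I note that the signs ``$\pm$'' on \emph{both} resolvents are unchanged, so no appeal to $R_{H}\left(  z\right)  ^{\ast}=R_{H}\left(  \bar{z}\right)$ is needed: the conjugation of the entire inner product keeps the two resolvent arguments matched. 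Comparing with Definition \ref{R5} applied to the triple $\left(  H,H_{1};J^{\ast}\right)$, this is exactly the integrand defining $\left\langle \mathcal{U}_{\pm}\left(  H,H_{1};J^{\ast}\right)  f,f_{1}\right\rangle$. Since $f,f_{1}$ are arbitrary, the operator identity follows.

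I expect the only point requiring care---rather than a genuine obstacle---to be the bookkeeping of existence: one must observe that conjugation transports the a.e.\ existence of the inner $\varepsilon$-limit from the $\left(  H_{1},H;J\right)$ side to the $\left(  H,H_{1};J^{\ast}\right)$ side, so that $\mathcal{U}_{\pm}\left(  H,H_{1};J^{\ast}\right)$ is in fact well defined and the equality above is not merely formal. Everything else reduces to a manipulation of sesquilinear forms.
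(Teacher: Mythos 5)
Your proof is correct: conjugating the defining sesquilinear form, using $\overline{\left\langle a,b\right\rangle }=\left\langle b,a\right\rangle $ and $\left\langle x,Jy\right\rangle =\left\langle J^{\ast}x,y\right\rangle $, and noting that both resolvent arguments remain $\lambda\pm i\varepsilon$ (so the sign convention is preserved and conjugation transports the a.e.\ existence of the $\varepsilon$-limit to the $\left(H,H_{1};J^{\ast}\right)$ side) is exactly the right argument, and your bookkeeping of the well-definedness of $\mathcal{U}_{\pm}\left(H,H_{1};J^{\ast}\right)$ is the only point of substance. The paper itself states this corollary without proof, citing \cite{ZRHL}, and your manipulation is precisely the standard derivation of that identity.
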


Next result give us the relation among $\mathcal{U}_{\pm}\left(
H_{1},H;J\right)  ,$ $\widetilde{W}_{\pm}\left(  H_{1},H;J\right)  $ and
$W_{\pm}(H_{1},H;J)$, it is a natural extension of the similar result in the
general scattering theory and the proof are similar too. If the reader are
interested in its proof, you can find it in the Appendix of this paper.

\begin{theorem}
\label{R10}Let $H$, $H_{1}\in\mathcal{A}\left(  \mathcal{M}\right)  $ be a
pair of self-adjoint operators and $J$ be an operator in $\mathcal{M}$. If
$\mathcal{U}_{\pm}\left(  H_{1},H;J\right)  $, $\mathcal{U}_{\pm}\left(
H,H;J^{\ast}J\right)  $, $\widetilde{W}_{\pm}\left(  H_{1},H;J\right)  $ and
$\widetilde{W}_{\pm}\left(  H,H;J^{\ast}J\right)  $ exist as well as
\[
\mathcal{U}_{\pm}^{\ast}\left(  H_{1},H;J\right)  \mathcal{U}_{\pm}\left(
H_{1},H;J\right)  =\mathcal{U}_{\pm}\left(  H,H;J^{\ast}J\right)  ,
\]
then $W_{\pm}(H_{1},H;J)$ exists and%
\[
\mathcal{U}_{\pm}\left(  H_{1},H;J\right)  =W_{\pm}(H_{1},H;J).
\]

\end{theorem}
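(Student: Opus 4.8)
The plan is to construct the strong limit defining $W_\pm(H_1,H;J)$ out of a weak-convergence ingredient and a norm-convergence ingredient, and then to identify it with $\mathcal{U}_\pm(H_1,H;J)$. Fix one choice of sign throughout, write $V(t)=e^{itH_1}Je^{-itH}P_{ac}^\infty(H)$, and put $W=\mathcal{U}_\pm(H_1,H;J)$ as the candidate limit. First I would invoke the comparison results of \cite{ZRHL}: whenever both exist, the generalized weak wave operator agrees with the generalized stationary wave operator, so that
\[
\widetilde{W}_\pm(H_1,H;J)=\mathcal{U}_\pm(H_1,H;J)=W,\qquad \widetilde{W}_\pm(H,H;J^\ast J)=\mathcal{U}_\pm(H,H;J^\ast J).
\]
By \eqref{G70} we also have $P_{ac}^\infty(H_1)W=W$, so $Wf$ lies in the range of $P_{ac}^\infty(H_1)$ for every $f\in\mathcal{H}$.

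Next I would evaluate $\|V(t)f\|^2$. Since $e^{itH_1}$ is unitary,
\[
\|V(t)f\|^2=\big\langle e^{itH}J^\ast Je^{-itH}P_{ac}^\infty(H)f,\,P_{ac}^\infty(H)f\big\rangle ,
\]
and because $P_{ac}^\infty(H)$ is a self-adjoint projection commuting with $e^{itH}$ (Remark \ref{I1}), the right-hand side equals $\big\langle P_{ac}^\infty(H)e^{itH}J^\ast Je^{-itH}P_{ac}^\infty(H)f,f\big\rangle$. Letting $t\to\pm\infty$, the existence of $\widetilde{W}_\pm(H,H;J^\ast J)$ identifies this limit, and the hypothesis $\mathcal{U}_\pm^\ast(H_1,H;J)\mathcal{U}_\pm(H_1,H;J)=\mathcal{U}_\pm(H,H;J^\ast J)$ then gives
\[
\lim_{t\to\pm\infty}\|V(t)f\|^2=\big\langle \mathcal{U}_\pm(H,H;J^\ast J)f,f\big\rangle=\|\mathcal{U}_\pm(H_1,H;J)f\|^2=\|Wf\|^2 .
\]

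For the cross term, using $Wf=P_{ac}^\infty(H_1)Wf$ I can transfer the projection to the other factor,
\[
\big\langle V(t)f,Wf\big\rangle=\big\langle P_{ac}^\infty(H_1)V(t)f,Wf\big\rangle ,
\]
and since $P_{ac}^\infty(H_1)V(t)\to\widetilde{W}_\pm(H_1,H;J)=W$ in the weak operator topology, the cross term converges to $\|Wf\|^2$. Expanding $\|V(t)f-Wf\|^2=\|V(t)f\|^2-2\,\mathrm{Re}\,\langle V(t)f,Wf\rangle+\|Wf\|^2$ and inserting the two limits yields $\|V(t)f-Wf\|^2\to 0$ for every $f$. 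Thus $V(t)\to W$ strongly, which is precisely the assertion that $W_\pm(H_1,H;J)$ exists and equals $\mathcal{U}_\pm(H_1,H;J)$.

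The main obstacle lies in the first step: the whole argument is driven by the identifications $\widetilde{W}_\pm=\mathcal{U}_\pm$ for the two pairs $(H_1,H;J)$ and $(H,H;J^\ast J)$, which are what convert the stationary resolvent data into time-dependent weak limits. These are the comparison theorems of \cite{ZRHL} relating $\mathcal{U}_\pm$, the abelian wave operators $\widetilde{\mathfrak{U}}_\pm$, and $\widetilde{W}_\pm$, and the delicate point is checking that they genuinely apply under the four stated existence hypotheses — in particular that the abelian and ordinary weak limits coincide once the latter is assumed to exist. Granting these, the remainder is the classical principle that weak convergence together with convergence of norms forces strong convergence, which runs without change here because $P_{ac}^\infty(H)$ and $P_{ac}^\infty(H_1)$ are projections commuting with their respective unitary groups.
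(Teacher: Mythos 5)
Your argument is correct and follows the standard stationary-scattering route for this theorem, which the paper itself does not prove (Theorem \ref{R10} is quoted from \cite{ZRHL}): identify $\widetilde{W}_{\pm}$ with $\mathcal{U}_{\pm}$ for the two pairs $\left(H_{1},H;J\right)$ and $\left(H,H;J^{\ast}J\right)$, use the hypothesis $\mathcal{U}_{\pm}^{\ast}\left(H_{1},H;J\right)\mathcal{U}_{\pm}\left(H_{1},H;J\right)=\mathcal{U}_{\pm}\left(H,H;J^{\ast}J\right)$ to get $\lim_{t\rightarrow\pm\infty}\left\Vert V(t)f\right\Vert^{2}=\left\Vert \mathcal{U}_{\pm}f\right\Vert^{2}$, handle the cross term through $P_{ac}^{\infty}\left(H_{1}\right)\mathcal{U}_{\pm}=\mathcal{U}_{\pm}$ (equation (\ref{G70})) and the weak limit, and conclude strong convergence by expanding $\left\Vert V(t)f-\mathcal{U}_{\pm}f\right\Vert^{2}$. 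The single external input you rely on --- that $\widetilde{W}_{\pm}$, when it exists, coincides with $\mathcal{U}_{\pm}$ via the abelian operator $\widetilde{\mathfrak{U}}_{\pm}$ (Abel summability of the convergent time limit, plus the interchange of $\lim_{\varepsilon\rightarrow0}$ with $\int d\lambda$ justified through the Parseval relation (\ref{g3}) and a Scheff\'{e}-type convergence of the densities $\left\langle \delta_{H}\left(\lambda,\varepsilon\right)f,f\right\rangle$) --- is exactly the ``relation among $\mathcal{U}_{\pm}$, $\widetilde{\mathfrak{U}}_{\pm}$, $\widetilde{W}_{\pm}$ and $W_{\pm}$'' that the paper explicitly defers to \cite{ZRHL}, and you correctly flag it as the delicate point rather than assuming it silently.
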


Now based on the definition of $\mathcal{U}_{\pm}\left(  H_{1},H;J\right)  ,$
we can get the following property.

\begin{corollary}
\label{R9}Let $H$, $H_{1}\in\mathcal{A}\left(  \mathcal{M}\right)  $ be a pair
of self-adjoint operators and $J$ be an operator in $\mathcal{M}$. If
$\mathcal{U}_{\pm}\left(  H_{1},H;J\right)  $ exists, then for any pair of
elements $f$ and $f_{1}$ in $\mathcal{H}\ $and any Borel sets $\Lambda$,
$\Lambda_{1}\subset\mathbb{R},$
\begin{align*}
&  \left\langle \mathcal{U}_{\pm}\left(  H_{1},H;J\right)  E_{H}\left(
\Lambda\right)  f,E_{H_{1}}\left(  \Lambda_{1}\right)  f_{1}\right\rangle \\
&  =\int_{\Lambda_{1}\cap\Lambda}\lim_{\varepsilon\rightarrow0}\frac
{\varepsilon}{\pi}\left\langle JR_{H}\left(  \lambda\pm i\varepsilon\right)
P_{ac}^{\infty}\left(  H\right)  f,R_{H_{1}}\left(  \lambda\pm i\varepsilon
\right)  P_{ac}^{\infty}\left(  H_{1}\right)  f_{1}\right\rangle d\lambda.
\end{align*}

\end{corollary}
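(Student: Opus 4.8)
The plan is to unwind Definition \ref{R5} and to show that inserting the spectral projections $E_H(\Lambda)$ and $E_{H_1}(\Lambda_1)$ multiplies the limiting integrand by the characteristic function $\mathcal{X}_{\Lambda\cap\Lambda_1}$, so that the integral over $\mathbb{R}$ collapses to one over $\Lambda\cap\Lambda_1$. First I would substitute $f\mapsto E_H(\Lambda)f$ and $f_1\mapsto E_{H_1}(\Lambda_1)f_1$ into the formula defining $\mathcal{U}_{\pm}$. Since $P_{ac}^{\infty}(H)\in\mathcal{M}\cap\mathcal{A}'$ by Remark \ref{I1}, it commutes with every $E_H(\lambda)$, hence with $E_H(\Lambda)$ and with $R_H(\lambda\pm i\varepsilon)$; the same holds on the $H_1$-side. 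Writing $u=P_{ac}^{\infty}(H)f$ and $v=P_{ac}^{\infty}(H_1)f_1$, which lie in $\mathcal{H}_{ac}(H)$ and $\mathcal{H}_{ac}(H_1)$ respectively (again by Remark \ref{I1}), the integrand becomes $\frac{\varepsilon}{\pi}\langle JR_H(\lambda\pm i\varepsilon)E_H(\Lambda)u,R_{H_1}(\lambda\pm i\varepsilon)E_{H_1}(\Lambda_1)v\rangle$. Let $\rho(\lambda)$ denote the pointwise limit of $\frac{\varepsilon}{\pi}\langle JR_H(\lambda\pm i\varepsilon)u,R_{H_1}(\lambda\pm i\varepsilon)v\rangle$, which exists for a.e.\ $\lambda$ precisely because $\mathcal{U}_{\pm}(H_1,H;J)$ is assumed to exist; then the right-hand side of the claim is exactly $\int_{\Lambda\cap\Lambda_1}\rho(\lambda)\,d\lambda$.

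The central step is to remove the two projections one at a time. To handle $E_H(\Lambda)$, I would split $u=E_H(\Lambda)u+E_H(\mathbb{R}\setminus\Lambda)u$ and estimate the unwanted term by Cauchy--Schwarz after factoring $\varepsilon/\pi=(\varepsilon/\pi)^{1/2}(\varepsilon/\pi)^{1/2}$. Abbreviating $E_{\Lambda^c}=E_H(\mathbb{R}\setminus\Lambda)$ and using the identity (\ref{g21}) in the form $\frac{\varepsilon}{\pi}\|R_H(\lambda\pm i\varepsilon)w\|^2=\langle\delta_H(\lambda,\varepsilon)w,w\rangle$, this gives
\[
\left|\tfrac{\varepsilon}{\pi}\langle JR_H(\lambda\pm i\varepsilon)E_{\Lambda^c}u,R_{H_1}(\lambda\pm i\varepsilon)v\rangle\right|\le\|J\|\,\langle\delta_H(\lambda,\varepsilon)E_{\Lambda^c}u,E_{\Lambda^c}u\rangle^{1/2}\,\langle\delta_{H_1}(\lambda,\varepsilon)v,v\rangle^{1/2}.
\]
By (\ref{g6}) and (\ref{g30}), $\langle\delta_H(\lambda,\varepsilon)E_{\Lambda^c}u,E_{\Lambda^c}u\rangle\to\mathcal{X}_{\mathbb{R}\setminus\Lambda}(\lambda)\,\frac{d}{d\lambda}\langle E_H(\lambda)u,u\rangle$ for a.e.\ $\lambda$, which vanishes on $\Lambda$, while $\langle\delta_{H_1}(\lambda,\varepsilon)v,v\rangle$ tends to a finite limit a.e.; hence the bound tends to $0$ for a.e.\ $\lambda\in\Lambda$. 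Thus the limiting density with $E_H(\Lambda)$ inserted agrees with $\rho(\lambda)$ for a.e.\ $\lambda\in\Lambda$, and applying the same Cauchy--Schwarz estimate to $E_H(\Lambda)u$ itself shows it vanishes for a.e.\ $\lambda\in\mathbb{R}\setminus\Lambda$. Together these prove that inserting $E_H(\Lambda)$ multiplies the limiting density by $\mathcal{X}_{\Lambda}(\lambda)$. Repeating the argument on the $H_1$-side with $\delta_{H_1}$ produces the factor $\mathcal{X}_{\Lambda_1}(\lambda)$, so the density for the pair $E_H(\Lambda)f,E_{H_1}(\Lambda_1)f_1$ equals $\mathcal{X}_{\Lambda}(\lambda)\mathcal{X}_{\Lambda_1}(\lambda)\rho(\lambda)=\mathcal{X}_{\Lambda\cap\Lambda_1}(\lambda)\rho(\lambda)$ for a.e.\ $\lambda$.

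Integrating this identity over $\mathbb{R}$ and using $\int_{\mathbb{R}}\mathcal{X}_{\Lambda\cap\Lambda_1}(\lambda)\rho(\lambda)\,d\lambda=\int_{\Lambda\cap\Lambda_1}\rho(\lambda)\,d\lambda$ then yields the stated formula. The main obstacle is the central step: the single factor $\varepsilon/\pi$ must be split evenly between the two resolvents so that each side collapses to a genuine $\delta_H$-type object to which the limit theorems (\ref{g6}) and (\ref{g30}) apply, and this is exactly where the commutation of $P_{ac}^{\infty}$ with the spectral projections and the inclusion of its range in $\mathcal{H}_{ac}$ (both from Remark \ref{I1}) become indispensable. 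Everything else---the substitution, the commutations, and the final integration---is routine.
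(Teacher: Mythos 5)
Your proof is correct and takes essentially the same route as the paper's: both split $\varepsilon/\pi$ evenly via Cauchy--Schwarz to reduce the integrand to $\delta_H$- and $\delta_{H_1}$-quadratic forms, then use (\ref{g21}), (\ref{g6}), (\ref{g30}) together with the commutation facts of Remark \ref{I1} to show the limiting density is supported on $\Lambda\cap\Lambda_1$ and agrees there with the density for the unprojected pair. The only difference is bookkeeping: you strip $E_H(\Lambda)$ and $E_{H_1}(\Lambda_1)$ one at a time, whereas the paper treats both simultaneously through a single three-term bilinear decomposition of $\mathcal{X}_{\Lambda\cap\Lambda_1}\,\alpha_{\pm}\left(f,f_{1};\lambda\right)$.
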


\begin{proof}
Set
\[
\alpha_{\pm}\left(  f,f_{1};\lambda\right)  =\lim_{\varepsilon\rightarrow
0}\frac{\varepsilon}{\pi}\left\langle JR_{H}\left(  \lambda\pm i\varepsilon
\right)  P_{ac}^{\infty}\left(  H\right)  f,R_{H_{1}}\left(  \lambda\pm
i\varepsilon\right)  P_{ac}^{\infty}\left(  H_{1}\right)  f_{1}\right\rangle
.
\]
Since
\[
P_{ac}^{\infty}\left(  H\right)  E_{H}\left(  \Lambda\right)  =E_{H}\left(
\Lambda\right)  P_{ac}^{\infty}\left(  H\right)
\]
and
\[
P_{ac}^{\infty}\left(  H_{1}\right)  E_{H_{1}}\left(  \Lambda_{1}\right)
=E_{H_{1}}\left(  \Lambda_{1}\right)  P_{ac}^{\infty}\left(  H_{1}\right)
\]
by Remark \ref{I1}, we get that
\begin{align*}
&  \left\vert \alpha_{\pm}\left(  E_{H}\left(  \Lambda\right)  f,E_{H_{1}%
}\left(  \Lambda_{1}\right)  f_{1};\lambda\right)  \right\vert ^{2}\\
&  \leq\frac{\varepsilon^{2}}{\pi^{2}}\left\Vert J\right\Vert ^{2}%
\lim_{\varepsilon\rightarrow0}\left\Vert R_{H}\left(  \lambda\pm
i\varepsilon\right)  E_{H}\left(  \Lambda\right)  P_{ac}^{\infty}\left(
H\right)  f\right\Vert ^{2}\lim_{\varepsilon\rightarrow0}\left\Vert R_{H_{1}%
}\left(  \lambda\pm i\varepsilon\right)  E_{H_{1}}\left(  \Lambda_{1}\right)
P_{ac}^{\infty}\left(  H_{1}\right)  f_{1}\right\Vert ^{2}\\
&  =\left\Vert J\right\Vert ^{2}\lim_{\varepsilon\rightarrow0}\left\langle
\delta_{H}\left(  \lambda,\varepsilon\right)  E_{H}\left(  \Lambda\right)
P_{ac}^{\infty}\left(  H\right)  f,f\right\rangle \cdot\lim_{\varepsilon
\rightarrow0}\left\langle \delta_{H_{1}}\left(  \lambda,\varepsilon\right)
E_{H_{1}}\left(  \Lambda_{1}\right)  P_{ac}^{\infty}\left(  H_{1}\right)
f_{1},f_{1}\right\rangle \\
&  =\left\Vert J\right\Vert ^{2}\mathcal{X}_{\Lambda\cap\Lambda_{1}}%
\frac{d\left\langle E_{H}\left(  \lambda\right)  P_{ac}^{\infty}\left(
H\right)  f,f\right\rangle }{d\lambda}\frac{d\left\langle E_{H_{1}}\left(
\lambda\right)  P_{ac}^{\infty}\left(  H_{1}\right)  f_{1},f_{1}\right\rangle
}{d\lambda}%
\end{align*}
by (\ref{g6}) and (\ref{g30}) where $\mathcal{X}_{\Lambda\cap\Lambda_{1}}$ is
the characteristic function of $\Lambda\cap\Lambda_{1}.$ Therefore
\[
\mathcal{X}_{\mathbb{R}\backslash\Lambda\cap\Lambda_{1}}\alpha_{\pm}\left(
E_{H}\left(  \Lambda\right)  f,E_{H_{1}}\left(  \Lambda_{1}\right)
f_{1};\lambda\right)  =0\text{.}%
\]
where $\mathcal{X}_{\mathbb{R}\backslash\Lambda\cap\Lambda_{1}}$ is the
characteristic function of $\mathbb{R}\backslash\Lambda\cap\Lambda_{1}.$ It
implies that
\[
\mathcal{X}_{\Lambda\cap\Lambda_{1}}\alpha_{\pm}\left(  E_{H}\left(
\Lambda\right)  f,E_{H_{1}}\left(  \Lambda_{1}\right)  f_{1};\lambda\right)
=\alpha_{\pm}\left(  E_{H}\left(  \Lambda\right)  f,E_{H_{1}}\left(
\Lambda_{1}\right)  f_{1};\lambda\right)  .
\]
Hence
\begin{align*}
&  \mathcal{X}_{\Lambda\cap\Lambda_{1}}\alpha_{\pm}\left(  f,f_{1}%
;\lambda\right) \\
&  =\mathcal{X}_{\Lambda\cap\Lambda_{1}}\alpha_{\pm}\left(  E_{H}\left(
\Lambda\right)  f,E_{H_{1}}\left(  \Lambda_{1}\right)  f_{1};\lambda\right)
+\mathcal{X}_{\Lambda\cap\Lambda_{1}}\alpha_{\pm}\left(  E_{H}\left(
\mathbb{R}\backslash\left(  \Lambda\right)  \right)  f,E_{H_{1}}\left(
\Lambda_{1}\right)  f_{1};\lambda\right) \\
&  +\mathcal{X}_{\Lambda\cap\Lambda_{1}}\alpha_{\pm}\left(  f,E_{H_{1}}\left(
\mathbb{R}\backslash\left(  \Lambda_{1}\right)  \right)  f_{1};\lambda\right)
\\
&  =\alpha_{\pm}\left(  E_{H}\left(  \Lambda\right)  f,E_{H_{1}}\left(
\Lambda_{1}\right)  f_{1};\lambda\right)  .
\end{align*}
It follows that
\begin{align*}
&  \left\langle \mathcal{U}_{\pm}\left(  H_{1},H;J\right)  E_{H}\left(
\Lambda\right)  f,E_{H_{1}}\left(  \Lambda_{1}\right)  f_{1}\right\rangle \\
&  =\int_{\mathbb{R}}\alpha_{\pm}\left(  E_{H}\left(  \Lambda\right)
f,E_{H_{1}}\left(  \Lambda_{1}\right)  f_{1};\lambda\right)  =\int
_{\mathbb{R}}\mathcal{X}_{\Lambda\cap\Lambda_{1}}\alpha_{\pm}\left(
f,f_{1};\lambda\right) \\
&  =\int_{\Lambda_{1}\cap\Lambda}\lim_{\varepsilon\rightarrow0}\frac
{\varepsilon}{\pi}\left\langle JR_{H}\left(  \lambda\pm i\varepsilon\right)
P_{ac}^{\infty}\left(  H\right)  f,R_{H_{1}}\left(  \lambda\pm i\varepsilon
\right)  P_{ac}^{\infty}\left(  H_{1}\right)  f_{1}\right\rangle d\lambda.
\end{align*}
The proof is completed.
\end{proof}

\begin{remark}
\label{M10}Let $H$, $H_{1}\in\mathcal{A}\left(  \mathcal{M}\right)  $ be a
pair of self-adjoint operators and $J$ be an operator in $\mathcal{M}$ with
$J\mathcal{D}\left(  H\right)  \subseteq\mathcal{D}\left(  H_{1}\right)  $.
Then
\begin{align}
H_{1}J-JH  &  =\left(  H_{1}-z\right)  J-J\left(  H-z\right) \nonumber\\
&  =\left(  H_{1}-z\right)  (JR_{H}\left(  z\right)  -R_{H_{1}}\left(
z\right)  J)\left(  H-z\right)  . \label{g71}%
\end{align}
Hence
\begin{equation}
JR_{H}\left(  z\right)  -R_{H_{1}}\left(  z\right)  J=R_{H_{1}}\left(
z\right)  \left(  H_{1}J-JH\right)  R_{H}\left(  z\right)  . \label{g28}%
\end{equation}
From (\ref{g21}) and (\ref{g28}), we have
\begin{align}
&  \frac{\varepsilon}{\pi}\left\langle JR_{H}\left(  \lambda\pm i\varepsilon
\right)  P_{ac}^{\infty}\left(  H\right)  f,R_{H_{1}}\left(  \lambda\pm
i\varepsilon\right)  P_{ac}^{\infty}\left(  H_{1}\right)  f_{1}\right\rangle
\nonumber\\
&  =\frac{\varepsilon}{\pi}\left\langle \left(  R_{H_{1}}\left(  \lambda\pm
i\varepsilon\right)  J+R_{H_{1}}\left(  \lambda\pm i\varepsilon\right)
\left(  H_{1}J-JH\right)  R_{H}\left(  \lambda\pm i\varepsilon\right)
\right)  P_{ac}^{\infty}\left(  H\right)  f,R_{H_{1}}\left(  \lambda\pm
i\varepsilon\right)  P_{ac}^{\infty}\left(  H_{1}\right)  f_{1}\right\rangle
\nonumber\\
&  =\left\langle \left(  J+\left(  H_{1}J-JH\right)  R_{H}\left(  \lambda\pm
i\varepsilon\right)  \right)  P_{ac}^{\infty}\left(  H\right)  f,\delta
_{H_{1}}\left(  \lambda,\varepsilon\right)  P_{ac}^{\infty}\left(
H_{1}\right)  f_{1}\right\rangle . \label{g26}%
\end{align}

\end{remark}

\begin{lemma}
\label{M11}Let $H$, $H_{1}\in\mathcal{A}\left(  \mathcal{M}\right)  $ be a
pair of self-adjoint operators and $J$ be an operator in $\mathcal{M}$ with
$J\mathcal{D}\left(  H\right)  \subseteq\mathcal{D}\left(  H_{1}\right)  $.
Suppose there are $H$-bounded operator $G$ and $H_{1}$-bounded operator
$G_{1}$ in $\mathcal{A}\left(  \mathcal{M}\right)  $ satisfying $H_{1}%
J-JH=G_{1}^{\ast}G$. If
\[
\lim_{\varepsilon\rightarrow0}GR_{H}\left(  \lambda\pm i\varepsilon\right)
P_{ac}^{\infty}\left(  H\right)  f
\]
exist a.e. $\lambda\in\mathbb{R}$ for every $f\in\mathcal{H}$ and
\[
\lim_{\varepsilon\rightarrow0}\left\langle G_{1}\delta_{H_{1}}\left(
\lambda,\varepsilon\right)  P_{ac}^{\infty}\left(  H_{1}\right)
f_{1},g\right\rangle
\]
exist a.e. $\lambda\in\mathbb{R}$ for any $f_{1}$ and $g\in\mathcal{H}$, then
$\mathcal{U}_{\pm}\left(  H_{1},H;J\right)  $ exists.
\end{lemma}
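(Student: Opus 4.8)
The plan is to verify, for fixed $f,f_{1}\in\mathcal{H}$, the hypothesis required by Definition \ref{R5}: that the integrand
\[
\alpha_{\pm}^{\varepsilon}(\lambda)=\frac{\varepsilon}{\pi}\left\langle JR_{H}(\lambda\pm i\varepsilon)P_{ac}^{\infty}(H)f,R_{H_{1}}(\lambda\pm i\varepsilon)P_{ac}^{\infty}(H_{1})f_{1}\right\rangle
\]
has a limit as $\varepsilon\rightarrow0$ for a.e. $\lambda\in\mathbb{R}$, and that the resulting integral defines a bounded form. The starting point is identity (\ref{g26}) of Remark \ref{M10}. Substituting the factorization $H_{1}J-JH=G_{1}^{\ast}G$ and moving $G_{1}^{\ast}$ across the inner product rewrites $\alpha_{\pm}^{\varepsilon}(\lambda)$ as the sum
\[
\left\langle JP_{ac}^{\infty}(H)f,\delta_{H_{1}}(\lambda,\varepsilon)P_{ac}^{\infty}(H_{1})f_{1}\right\rangle +\left\langle GR_{H}(\lambda\pm i\varepsilon)P_{ac}^{\infty}(H)f,G_{1}\delta_{H_{1}}(\lambda,\varepsilon)P_{ac}^{\infty}(H_{1})f_{1}\right\rangle ,
\]
so it suffices to treat the two summands separately.

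For the first summand I would use that $\delta_{H_{1}}(\lambda,\varepsilon)$ is self-adjoint and that $P_{ac}^{\infty}(H_{1})f_{1}\in\mathcal{H}_{ac}(H_{1})$ by Remark \ref{I1}; then (\ref{g6}), applied with the absolutely continuous vector in the appropriate slot, shows that the first summand converges a.e. to $d\left\langle E_{H_{1}}(\lambda)JP_{ac}^{\infty}(H)f,P_{ac}^{\infty}(H_{1})f_{1}\right\rangle /d\lambda$. The crux is the second summand, an inner product of two $\varepsilon$-families. By the first hypothesis the family $GR_{H}(\lambda\pm i\varepsilon)P_{ac}^{\infty}(H)f$ converges in norm for a.e. $\lambda$, and by the second hypothesis the family $G_{1}\delta_{H_{1}}(\lambda,\varepsilon)P_{ac}^{\infty}(H_{1})f_{1}$ converges weakly for a.e. $\lambda$. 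I would then invoke the elementary fact that if $a_{\varepsilon}\rightarrow a$ in norm and $b_{\varepsilon}\rightarrow b$ weakly with $\sup_{\varepsilon}\Vert b_{\varepsilon}\Vert <\infty$, then $\left\langle a_{\varepsilon},b_{\varepsilon}\right\rangle \rightarrow\left\langle a,b\right\rangle$.

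I expect the main obstacle to be exactly the uniform bound $\sup_{\varepsilon}\Vert b_{\varepsilon}\Vert <\infty$: the second hypothesis only gives weak convergence through the continuum $\varepsilon\rightarrow0$, so I would fix a.e. $\lambda$, pass to an arbitrary sequence $\varepsilon_{n}\rightarrow0$, apply the uniform boundedness principle to the weakly convergent sequence $\{b_{\varepsilon_{n}}\}$ to obtain a uniform norm bound, deduce convergence of $\left\langle a_{\varepsilon_{n}},b_{\varepsilon_{n}}\right\rangle$, and finally note that the limit does not depend on the chosen sequence. Combining the two summands yields the a.e. existence of $\lim_{\varepsilon\rightarrow0}\alpha_{\pm}^{\varepsilon}(\lambda)$, which is precisely the existence hypothesis in Definition \ref{R5}.

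It remains to check that $\left\langle \mathcal{U}_{\pm}(H_{1},H;J)f,f_{1}\right\rangle =\int_{\mathbb{R}}\lim_{\varepsilon\rightarrow0}\alpha_{\pm}^{\varepsilon}(\lambda)\,d\lambda$ defines a bounded operator, and here no smoothness of $G,G_{1}$ is needed. Applying the Cauchy--Schwarz inequality first inside the inner product and then in the $\lambda$-integral, followed by (\ref{g21}), reduces matters to each of the factors $\int_{\mathbb{R}}\tfrac{\varepsilon}{\pi}\Vert R_{H}(\lambda\pm i\varepsilon)P_{ac}^{\infty}(H)f\Vert^{2}d\lambda=\int_{\mathbb{R}}\left\langle \delta_{H}(\lambda,\varepsilon)P_{ac}^{\infty}(H)f,P_{ac}^{\infty}(H)f\right\rangle d\lambda$, which by (\ref{g40}) and Fubini (the Poisson kernel has total mass one in $\lambda$) equals $\Vert P_{ac}^{\infty}(H)f\Vert^{2}$. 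Hence $\int_{\mathbb{R}}\vert\alpha_{\pm}^{\varepsilon}(\lambda)\vert\,d\lambda\leq\Vert J\Vert\,\Vert f\Vert\,\Vert f_{1}\Vert$ uniformly in $\varepsilon$, and Fatou's lemma bounds $\int_{\mathbb{R}}\vert\lim_{\varepsilon\rightarrow0}\alpha_{\pm}^{\varepsilon}(\lambda)\vert\,d\lambda$ by the same quantity. This shows the integral converges absolutely and that $\mathcal{U}_{\pm}(H_{1},H;J)$ exists as a bounded operator with $\Vert\mathcal{U}_{\pm}\Vert\leq\Vert J\Vert$.
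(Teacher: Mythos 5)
Your proof is correct and takes essentially the same approach as the paper's: both start from identity (\ref{g26}), substitute $H_{1}J-JH=G_{1}^{\ast}G$ to split the integrand into the same two summands, handle the first via (\ref{g6}) (legitimate since $P_{ac}^{\infty}\left(  H_{1}\right)  f_{1}\in\mathcal{H}_{ac}\left(  H_{1}\right)  $ by Remark \ref{I1}), and handle the second by pairing the norm-convergent family $GR_{H}\left(  \lambda\pm i\varepsilon\right)  P_{ac}^{\infty}\left(  H\right)  f$ with the weakly convergent family $G_{1}\delta_{H_{1}}\left(  \lambda,\varepsilon\right)  P_{ac}^{\infty}\left(  H_{1}\right)  f_{1}$. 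The only difference is that you spell out two points the paper compresses into ``we can easily check'': the uniform-boundedness argument (via sequences and Banach--Steinhaus) that justifies passing to the limit in the weak--strong pairing, and the Cauchy--Schwarz/Fatou estimate via (\ref{g21}) and (\ref{g40}) showing the limiting sesquilinear form is absolutely integrable and bounded with $\left\Vert \mathcal{U}_{\pm}\right\Vert \leq\left\Vert J\right\Vert $, both of which are correct additions.
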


\begin{proof}
Since $H_{1}J-JH=G_{1}^{\ast}G,$ by (\ref{g26})
\begin{align}
&  \frac{\varepsilon}{\pi}\left\langle JR_{H}\left(  \lambda\pm i\varepsilon
\right)  P_{ac}^{\infty}\left(  H\right)  f,R_{H_{1}}\left(  \lambda\pm
i\varepsilon\right)  P_{ac}^{\infty}\left(  H_{1}\right)  f_{1}\right\rangle
\nonumber\\
&  =\left\langle \left(  J+\left(  H_{1}J-JH\right)  R_{H}\left(  \lambda\pm
i\varepsilon\right)  \right)  P_{ac}^{\infty}\left(  H\right)  f,\delta
_{H_{1}}\left(  \lambda,\varepsilon\right)  P_{ac}^{\infty}\left(
H_{1}\right)  f_{1}\right\rangle \nonumber\\
&  =\left\langle \left(  J+G_{1}^{\ast}GR_{H}\left(  \lambda\pm i\varepsilon
\right)  \right)  P_{ac}^{\infty}\left(  H\right)  f,\delta_{H_{1}}\left(
\lambda,\varepsilon\right)  P_{ac}^{\infty}\left(  H_{1}\right)
f_{1}\right\rangle \nonumber\\
&  =\left\langle JP_{ac}^{\infty}\left(  H\right)  f,\delta_{H_{1}}\left(
\lambda,\varepsilon\right)  P_{ac}^{\infty}\left(  H_{1}\right)
f_{1}\right\rangle +\left\langle \left(  GR_{H}\left(  \lambda\pm
i\varepsilon\right)  \right)  P_{ac}^{\infty}\left(  H\right)  f,G_{1}%
\delta_{H_{1}}\left(  \lambda,\varepsilon\right)  P_{ac}^{\infty}\left(
H_{1}\right)  f_{1}\right\rangle . \label{g43}%
\end{align}
Then from (\ref{g6}),
\[
\lim_{\varepsilon\rightarrow0}\left\langle JP_{ac}^{\infty}\left(  H\right)
f,\delta_{H_{1}}\left(  \lambda,\varepsilon\right)  P_{ac}^{\infty}\left(
H_{1}\right)  f_{1}\right\rangle \text{ exists a.e}.\lambda\in\mathbb{R}%
\text{.}%
\]
Since
\[
\lim_{\varepsilon\rightarrow0}GR_{H}\left(  \lambda\pm i\varepsilon\right)
P_{ac}^{\infty}\left(  H\right)  f
\]
and
\[
\lim_{\varepsilon\rightarrow0}\left\langle G_{1}\delta_{H_{1}}\left(
\lambda,\varepsilon\right)  P_{ac}^{\infty}\left(  H_{1}\right)
f_{1},g\right\rangle
\]
exist a.e. $\lambda\in\mathbb{R}$ for every $f,f_{1}$ and $g\in\mathcal{H},$
we can easily check that
\[
\lim_{\varepsilon\rightarrow0}\left\langle \left(  GR_{H}\left(  \lambda\pm
i\varepsilon\right)  \right)  P_{ac}^{\infty}\left(  H\right)  f,G_{1}%
\delta_{H_{1}}\left(  \lambda,\varepsilon\right)  P_{ac}^{\infty}\left(
H_{1}\right)  f_{1}\right\rangle
\]
exists a.e. $\lambda\in\mathbb{R}$. Hence $\mathcal{U}_{\pm}\left(
H_{1},H;J\right)  $ is well-defined.
\end{proof}

\begin{lemma}
\label{M12}Let $H$, $H_{1}\in\mathcal{A}\left(  \mathcal{M}\right)  $ be a
pair of self-adjoint operators and $J$ be an operator in $\mathcal{M}$ with
$J\mathcal{D}\left(  H\right)  \subseteq\mathcal{D}\left(  H_{1}\right)  $.
Suppose there are $H$-bounded operator $G$ and $H_{1}$-bounded operator in
$\mathcal{A}\left(  \mathcal{M}\right)  $ satisfying $H_{1}J-JH=G_{1}^{\ast}%
G$. If
\begin{equation}
\lim_{\varepsilon\rightarrow0}GR_{H}\left(  \lambda\pm i\varepsilon\right)
P_{ac}^{\infty}\left(  H\right)  f \label{g44}%
\end{equation}
exist a.e. $\lambda\in\mathbb{R}$ for every $f\in\mathcal{H}$ and
\begin{equation}
\lim_{\varepsilon\rightarrow0}\left\langle G_{1}\delta_{H_{1}}\left(
\lambda,\varepsilon\right)  P_{ac}^{\infty}\left(  H_{1}\right)
f_{1},g\right\rangle \label{g45}%
\end{equation}
exist a.e. $\lambda\in\mathbb{R}$ for any $f_{1}$ and $g\in\mathcal{H},$ then
$\mathcal{U}_{\pm}\left(  H,H;J^{\ast}J\right)  $ exists and%
\[
\mathcal{U}_{\pm}^{\ast}\left(  H_{1},H;J\right)  \mathcal{U}_{\pm}\left(
H_{1},H;J\right)  =\mathcal{U}_{\pm}\left(  H,H;J^{\ast}J\right)  .
\]

\end{lemma}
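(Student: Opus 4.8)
The statement bundles two claims: that $\mathcal{U}_\pm(H,H;J^\ast J)$ is well defined, and that it factors as $\mathcal{U}_\pm^\ast(H_1,H;J)\mathcal{U}_\pm(H_1,H;J)$. My plan is to reduce both to the boundary data already controlled by (\ref{g44}) and (\ref{g45}). For existence, the first move is the elementary identity $\langle J^\ast J Af,Af_1\rangle=\langle JAf,JAf_1\rangle$ taken with $A=R_H(\lambda\pm i\varepsilon)P_{ac}^\infty(H)$, which rewrites the integrand defining $\mathcal{U}_\pm(H,H;J^\ast J)$ as $\tfrac{\varepsilon}{\pi}\langle JR_H(\lambda\pm i\varepsilon)P_{ac}^\infty(H)f,\,JR_H(\lambda\pm i\varepsilon)P_{ac}^\infty(H)f_1\rangle$. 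Using the resolvent identity (\ref{g28}) with $H_1J-JH=G_1^\ast G$, as in (\ref{g26}) and (\ref{g43}), I would factor $JR_H(\lambda\pm i\varepsilon)P_{ac}^\infty(H)=R_{H_1}(\lambda\pm i\varepsilon)T_\varepsilon(\lambda)$ where $T_\varepsilon(\lambda)=(J+G_1^\ast GR_H(\lambda\pm i\varepsilon))P_{ac}^\infty(H)$; since $\tfrac{\varepsilon}{\pi}R_{H_1}^\ast R_{H_1}=\delta_{H_1}$ by (\ref{g21}), the integrand collapses to $\langle T_\varepsilon(\lambda)f,\,\delta_{H_1}(\lambda,\varepsilon)T_\varepsilon(\lambda)f_1\rangle$.

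Expanding $T_\varepsilon(\lambda)f=JP_{ac}^\infty(H)f+G_1^\ast GR_H(\lambda\pm i\varepsilon)P_{ac}^\infty(H)f$ breaks the integrand into four pieces, which I would treat separately. The $J$--$J$ piece $\langle JP_{ac}^\infty(H)f,\delta_{H_1}(\lambda,\varepsilon)JP_{ac}^\infty(H)f_1\rangle$ has $\varepsilon$-independent vectors and, by (\ref{g40}), is the Poisson integral of the finite measure $\langle E_{H_1}(\cdot)JP_{ac}^\infty(H)f,JP_{ac}^\infty(H)f_1\rangle$, hence converges for a.e. $\lambda$. The two cross pieces converge by pairing the norm-convergent factor from (\ref{g44}) against the weakly (hence boundedly) convergent factor $G_1\delta_{H_1}(\lambda,\varepsilon)(\cdot)$ from (\ref{g45}) and (\ref{g6}), exactly as in Lemma \ref{M11}. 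The genuinely new piece is the one quadratic in $GR_H$, namely $\langle a_\varepsilon,\,G_1\delta_{H_1}(\lambda,\varepsilon)G_1^\ast b_\varepsilon\rangle$ with $a_\varepsilon,b_\varepsilon$ the norm-convergent vectors $GR_H(\lambda\pm i\varepsilon)P_{ac}^\infty(H)f$ and $GR_H(\lambda\pm i\varepsilon)P_{ac}^\infty(H)f_1$ of (\ref{g44}); here I would separate the fixed-vector limit (again a Poisson integral) and bound the two remainders by Cauchy--Schwarz against $\langle G_1\delta_{H_1}(\lambda,\varepsilon)G_1^\ast\,\cdot\,,\,\cdot\rangle$. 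This last estimate forces the uniform bound $\sup_{\lambda,\varepsilon}\Vert G_1\delta_{H_1}(\lambda,\varepsilon)G_1^\ast\Vert<\infty$, i.e. the $H_1$-smoothness of $G_1$ (Theorem \ref{K1}(4)), which I would invoke to conclude that the integrand converges a.e. and is integrable, so that $\mathcal{U}_\pm(H,H;J^\ast J)$ exists.

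For the factorisation I would compare matrix elements on spectral subspaces. Corollary \ref{R9} shows that $\mathcal{U}_\pm(H_1,H;J)$ has a.c. density $\tfrac{d}{d\lambda}\langle E_{H_1}(\lambda)\mathcal{U}_\pm(H_1,H;J)f,h\rangle=\alpha_\pm(f,h;\lambda)$ and, on taking $\Lambda=\mathbb{R}$ in either slot, that $\mathcal{U}_\pm(H_1,H;J)$ intertwines $E_H$ and $E_{H_1}$; by (\ref{G70}) the vectors $g=\mathcal{U}_\pm(H_1,H;J)f$ and $g_1=\mathcal{U}_\pm(H_1,H;J)f_1$ lie in $\mathcal{H}_{ac}^\infty(H_1)\subseteq\mathcal{H}_{ac}(H_1)$, so their spectral measures are absolutely continuous. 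Feeding the intertwining into $\langle\mathcal{U}_\pm^\ast\mathcal{U}_\pm E_H(\Lambda)f,E_H(\Lambda_1)f_1\rangle=\langle E_{H_1}(\Lambda\cap\Lambda_1)g,g_1\rangle$ and comparing with the analogue of Corollary \ref{R9} for $\mathcal{U}_\pm(H,H;J^\ast J)$, both sides become integrals over $\Lambda\cap\Lambda_1$; matching integrands for all Borel $\Lambda,\Lambda_1$ reduces the whole identity to the pointwise equality $\beta_\pm(f,f_1;\lambda)=\alpha_\pm(f,\mathcal{U}_\pm(H_1,H;J)f_1;\lambda)$ a.e., where $\beta_\pm$ denotes the limit integrand found above. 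I would prove this by writing $P_{ac}^\infty(H_1)g_1=g_1$, substituting the stationary representation of $g_1$ from Definition \ref{R5}, and exploiting that $\delta_{H_1}(\lambda,\varepsilon)$ localises spectrally at energy $\lambda$ as $\varepsilon\to0$, so that $R_{H_1}(\lambda\pm i\varepsilon)g_1$ may be traded for the second copy $JR_H(\lambda\pm i\varepsilon)P_{ac}^\infty(H)f_1$ inside the pairing $\tfrac{\varepsilon}{\pi}\langle JR_H(\lambda\pm i\varepsilon)P_{ac}^\infty(H)f,\,\cdot\,\rangle$. Corollary \ref{R8}, giving $\mathcal{U}_\pm^\ast(H_1,H;J)=\mathcal{U}_\pm(H,H_1;J^\ast)$, then records the result in the stated form.

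I expect the pointwise identity $\beta_\pm=\alpha_\pm(f,\mathcal{U}_\pm(H_1,H;J)f_1;\cdot)$ to be the real obstacle: it is the stationary multiplication theorem, and its proof must break the circularity whereby the intertwining relation keeps reintroducing $\mathcal{U}_\pm$, instead extracting the diagonal contribution purely from the localisation of $\delta_{H_1}(\lambda,\varepsilon)$. Two subsidiary difficulties accompany it. First, $\delta_{H_1}(\lambda,\varepsilon)$ is not uniformly bounded as $\varepsilon\to0$, so every limit that mixes the $\varepsilon$-dependent vectors $T_\varepsilon(\lambda)$ with the weight $\delta_{H_1}(\lambda,\varepsilon)$ must be powered by the smoothness bound on $G_1$ already flagged. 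Second, since $P_{ac}^\infty(H_1)$ may be strictly smaller than $P_{ac}(H_1)$, one must verify that the absolutely-continuous-but-not-norm-absolutely-continuous part of $H_1$ contributes nothing; this is secured by the $P_{ac}^\infty$-insertions in (\ref{g45}) together with the fact (Remark \ref{I1}) that $P_{ac}^\infty(H_1)$ commutes with $\delta_{H_1}(\lambda,\varepsilon)$.
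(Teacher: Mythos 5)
Your reduction of the integrand of $\mathcal{U}_{\pm}\left(H,H;J^{\ast}J\right)$ to $\langle T_{\varepsilon}(\lambda)f,\delta_{H_{1}}(\lambda,\varepsilon)T_{\varepsilon}(\lambda)f_{1}\rangle$ is correct, but the four-piece expansion cannot be closed under the stated hypotheses, and this is a genuine gap rather than a technicality. In the two cross terms you must pass to the limit in pairings of the form $\langle a_{\varepsilon},G_{1}\delta_{H_{1}}(\lambda,\varepsilon)JP_{ac}^{\infty}(H)f_{1}\rangle$, i.e.\ you need weak limits of $G_{1}\delta_{H_{1}}(\lambda,\varepsilon)w$ for $w=JP_{ac}^{\infty}(H)f_{1}$; hypothesis (\ref{g45}) only controls $G_{1}\delta_{H_{1}}(\lambda,\varepsilon)P_{ac}^{\infty}(H_{1})f_{1}$, and $JP_{ac}^{\infty}(H)f_{1}$ has no reason to lie in $\mathcal{H}_{ac}^{\infty}(H_{1})$. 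Worse, your quadratic piece is handled by invoking $\sup_{\lambda,\varepsilon}\Vert G_{1}\delta_{H_{1}}(\lambda,\varepsilon)G_{1}^{\ast}\Vert<\infty$, i.e.\ $H_{1}$-smoothness of $G_{1}$, which is not among the hypotheses ($G_{1}$ is merely $H_{1}$-bounded) and which fails in the lemma's intended application: in Theorem \ref{M15} one takes $G_{1}^{\ast}=V\vert H_{1}J-JH\vert^{1/2}$ with $H_{1}$ arbitrary, and by Theorem \ref{M7} the smoothness of $G_{1}$ would force $R\left(G_{1}^{\ast}\right)\leq P_{ac}^{\infty}\left(H_{1}\right)$, which need not hold. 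So the existence argument, as proposed, would fail exactly where the lemma is needed; only your first piece (the Poisson-integral limit for the $J$--$J$ term) survives.

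For the factorisation you correctly isolate the pointwise identity (your $\beta_{\pm}=\alpha_{\pm}(f,\mathcal{U}_{\pm}f_{1};\cdot)$, the stationary multiplication step) as the crux, but you leave it unproven, and the paper's device for proving it is also what repairs the existence gap. Since $P_{ac}^{\infty}\left(H_{1}\right)\mathcal{U}_{\pm}=\mathcal{U}_{\pm}$ by (\ref{G70}), the vector $\mathcal{U}_{\pm}f$ lies in $\mathcal{H}_{ac}^{\infty}\left(H_{1}\right)$, so $\langle E_{H_{1}}\left(\Lambda\right)\mathcal{U}_{\pm}f,f_{1}\rangle$ admits two integral representations, one via (\ref{g6}) and one via Corollary \ref{R9}; comparing integrands yields the a.e.\ identity (\ref{g27}), namely that $\lim_{\varepsilon\rightarrow0}\langle\delta_{H_{1}}(\lambda,\varepsilon)\mathcal{U}_{\pm}f,f_{1}\rangle$ equals the defining integrand limit. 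One then substitutes $f_{1}=\mathcal{U}_{\pm}g$ into the defining integrand and unwinds with (\ref{g71}) and (\ref{g26}): every occurrence of $\delta_{H_{1}}$ now acts on $\mathcal{U}_{\pm}g\in\mathcal{H}_{ac}^{\infty}\left(H_{1}\right)$, the $\varepsilon$-dependent left entry converges in norm by (\ref{g44}), and the chain terminates in $\lim_{\varepsilon\rightarrow0}\frac{\varepsilon}{\pi}\langle R_{H}\left(\lambda\mp i\varepsilon\right)J^{\ast}JR_{H}\left(\lambda\pm i\varepsilon\right)P_{ac}^{\infty}\left(H\right)f,P_{ac}^{\infty}\left(H\right)g\rangle$, whose a.e.\ existence is thereby obtained for free. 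Thus the existence of $\mathcal{U}_{\pm}\left(H,H;J^{\ast}J\right)$ and the factorisation come out simultaneously, with no smoothness bound and with $\delta_{H_{1}}$ never applied outside $\mathcal{H}_{ac}^{\infty}\left(H_{1}\right)$. Your ``localisation of $\delta_{H_{1}}$'' heuristic is the right instinct, but the rigorous substitute is (\ref{g27}), not a direct trade of resolvents inside the pairing.
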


\begin{proof}
By Lemma \ref{M11}, $\mathcal{U}_{\pm}\left(  H_{1},H;J\right)  $ exists. For
any Borel set $\Lambda$
\begin{equation}
\left\langle E_{H_{1}}\left(  \Lambda\right)  \cdot\mathcal{U}_{\pm}\left(
H_{1},H;J\right)  f,f_{1}\right\rangle =\int_{\Lambda}\lim_{\varepsilon
\rightarrow0}\left\langle \delta_{H_{1}}\left(  \lambda,\varepsilon\right)
\mathcal{U}_{\pm}\left(  H_{1},H;J\right)  f,f_{1}\right\rangle d\lambda
\label{G50}%
\end{equation}
where
\[
\lim_{\varepsilon\rightarrow0}\left\langle \delta_{H_{1}}\left(
\lambda,\varepsilon\right)  \mathcal{U}_{\pm}\left(  H_{1},H;J\right)
f,f_{1}\right\rangle \text{ exists}%
\]
by (\ref{g6}) and
\[
P_{ac}^{\infty}\left(  H_{1}\right)  \mathcal{U}_{\pm}\left(  H_{1}%
,H;J\right)  =\mathcal{U}_{\pm}\left(  H_{1},H;J\right)  .
\]
By Corollary \ref{R9}, we also have
\begin{align}
&  \left\langle E_{H_{1}}\left(  \Lambda\right)  \cdot\mathcal{U}_{\pm}\left(
H_{1},H;J\right)  f,f_{1}\right\rangle \nonumber\\
&  =\int_{\Lambda}\lim_{\varepsilon\rightarrow0}\frac{\varepsilon}{\pi
}\left\langle JR_{H}\left(  \lambda\pm i\varepsilon\right)  P_{ac}^{\infty
}\left(  H\right)  f,R_{H_{1}}\left(  \lambda\pm i\varepsilon\right)
P_{ac}^{\infty}\left(  H_{1}\right)  f_{1}\right\rangle d\lambda. \label{G52}%
\end{align}
Comparing two integrands of (\ref{G50}) and (\ref{G52}), we have
\begin{align}
&  \lim_{\varepsilon\rightarrow0}\left\langle \delta_{H_{1}}\left(
\lambda,\varepsilon\right)  \mathcal{U}_{\pm}\left(  H_{1},H;J\right)
f,f_{1}\right\rangle \nonumber\\
&  =\lim_{\varepsilon\rightarrow0}\frac{\varepsilon}{\pi}\left\langle
JR_{H}\left(  \lambda\pm i\varepsilon\right)  P_{ac}^{\infty}\left(  H\right)
f,R_{H_{1}}\left(  \lambda\pm i\varepsilon\right)  P_{ac}^{\infty}\left(
H_{1}\right)  f_{1}\right\rangle \text{ a.e. }\lambda\in\mathbb{R}.
\label{g27}%
\end{align}

So this equality holds only when
\[
\lim_{\varepsilon\rightarrow0}\frac{\varepsilon}{\pi}\left\langle
JR_{H}\left(  \lambda\pm i\varepsilon\right)  P_{ac}^{\infty}\left(  H\right)
f,R_{H_{1}}\left(  \lambda\pm i\varepsilon\right)  P_{ac}^{\infty}\left(
H_{1}\right)  f_{1}\right\rangle
\]
exists a.e. $\lambda\in\mathbb{R}$. By (\ref{g26}) and the fact that
$H_{1}J-JH=G_{1}^{\ast}G$, we have%
\begin{align*}
&  \frac{\varepsilon}{\pi}\left\langle JR_{H}\left(  \lambda\pm i\varepsilon
\right)  P_{ac}^{\infty}\left(  H\right)  f,R_{H_{1}}\left(  \lambda\pm
i\varepsilon\right)  P_{ac}^{\infty}\left(  H_{1}\right)  f_{1}\right\rangle
\\
&  =\left\langle JP_{ac}^{\infty}\left(  H\right)  f,\delta_{H_{1}}\left(
\lambda,\varepsilon\right)  P_{ac}^{\infty}\left(  H_{1}\right)
f_{1}\right\rangle +\left\langle \left(  GR_{H}\left(  \lambda\pm
i\varepsilon\right)  \right)  P_{ac}^{\infty}\left(  H\right)  f,G_{1}%
\delta_{H_{1}}\left(  \lambda,\varepsilon\right)  P_{ac}^{\infty}\left(
H_{1}\right)  f_{1}\right\rangle .
\end{align*}
Then by (\ref{g44}), (\ref{g45})and (\ref{g6}), we can conclude that
\begin{equation}
\lim_{\varepsilon\rightarrow0}\frac{\varepsilon}{\pi}\left\langle
JR_{H}\left(  \lambda\pm i\varepsilon\right)  P_{ac}^{\infty}\left(  H\right)
f,R_{H_{1}}\left(  \lambda\pm i\varepsilon\right)  P_{ac}^{\infty}\left(
H_{1}\right)  f_{1}\right\rangle \label{g68}%
\end{equation}
exists a.e. $\lambda\in\mathbb{R}$.

In (\ref{g68}), replace $f_{1}$ by $\mathcal{U}_{\pm}\left(  H_{1},H;J\right)
g,$ so from (\ref{g71}), (\ref{g26}) and (\ref{g27}) we have
\begin{align*}
&  \lim_{\varepsilon\rightarrow0}\frac{\varepsilon}{\pi}\left\langle
JR_{H}\left(  \lambda\pm i\varepsilon\right)  P_{ac}^{\infty}\left(  H\right)
f,R_{H_{1}}\left(  \lambda\pm i\varepsilon\right)  \mathcal{U}_{\pm}\left(
H_{1},H;J\right)  P_{ac}^{\infty}\left(  H\right)  g\right\rangle \\
&  =\lim_{\varepsilon\rightarrow0}\left\langle \left(  J+\left(
H_{1}J-JH\right)  R_{H}\left(  \lambda\pm i\varepsilon\right)  \right)
P_{ac}^{\infty}\left(  H\right)  f,\delta_{H_{1}}\left(  \lambda
,\varepsilon\right)  \mathcal{U}_{\pm}\left(  H_{1},H;J\right)  P_{ac}%
^{\infty}\left(  H\right)  g\right\rangle \\
&  =\lim_{\varepsilon\rightarrow0}\frac{\varepsilon}{\pi}\left\langle \left(
J+\left(  H_{1}J-JH\right)  R_{H}\left(  \lambda\pm i\varepsilon\right)
\right)  P_{ac}^{\infty}\left(  H\right)  f,R_{H_{1}}\left(  \lambda\mp
i\varepsilon\right)  JR_{H}\left(  \lambda\pm i\varepsilon\right)
P_{ac}^{\infty}\left(  H\right)  g\right\rangle \\
&  =\lim_{\varepsilon\rightarrow0}\left\langle \left(  J+\left(
H_{1}J-JH\right)  R_{H}\left(  \lambda\pm i\varepsilon\right)  \right)
P_{ac}^{\infty}\left(  H\right)  f,\delta_{H_{1}}\left(  \lambda\right)
R_{H_{1}}^{-1}\left(  \lambda\pm i\varepsilon\right)  JR_{H}\left(  \lambda\pm
i\varepsilon\right)  P_{ac}^{\infty}\left(  H\right)  g\right\rangle \\
&  =\lim_{\varepsilon\rightarrow0}\left\langle JR_{H}\left(  \lambda\pm
i\varepsilon\right)  P_{ac}^{\infty}\left(  H\right)  f,JR_{H}\left(
\lambda\pm i\varepsilon\right)  P_{ac}^{\infty}\left(  H\right)
g\right\rangle \\
&  =\lim_{\varepsilon\rightarrow0}\frac{\varepsilon}{\pi}\left\langle
R_{H}\left(  \lambda\mp i\varepsilon\right)  J^{\ast}JR_{H}\left(  \lambda\pm
i\varepsilon\right)  P_{ac}^{\infty}\left(  H\right)  f,P_{ac}^{\infty}\left(
H\right)  g\right\rangle \text{ a.e. }\lambda\in\mathbb{R}.
\end{align*}
Hence applying the definition of $\mathcal{U}_{\pm}\left(  H_{1},H;J\right)
,$ we have
\begin{align*}
&  \left\langle \mathcal{U}_{\pm}\left(  H_{1},H;J\right)  f,\mathcal{U}_{\pm
}\left(  H_{1},H;J\right)  g\right\rangle \\
&  =\int_{-\infty}^{\infty}\lim_{\varepsilon\rightarrow0}\frac{\varepsilon
}{\pi}\left\langle JR_{H}\left(  \lambda\pm i\varepsilon\right)
P_{ac}^{\infty}\left(  H\right)  f,R_{H_{1}}\left(  \lambda\pm i\varepsilon
\right)  \mathcal{U}_{\pm}\left(  H_{1},H;J\right)  P_{ac}^{\infty}\left(
H\right)  g\right\rangle \\
&  =\int_{-\infty}^{\infty}\lim_{\varepsilon\rightarrow0}\frac{\varepsilon
}{\pi}\left\langle R_{H}\left(  \lambda\mp i\varepsilon\right)  J^{\ast}%
JR_{H}\left(  \lambda\pm i\varepsilon\right)  P_{ac}^{\infty}\left(  H\right)
f,P_{ac}^{\infty}\left(  H\right)  g\right\rangle \\
&  =\left\langle \mathcal{U}_{\pm}\left(  H,H;J^{\ast}J\right)
f,g\right\rangle .
\end{align*}
It implies that\ \
\[
\mathcal{U}_{\pm}^{\ast}\left(  H_{1},H;J\right)  \mathcal{U}_{\pm}\left(
H_{1},H;J\right)  =\mathcal{U}_{\pm}\left(  H,H;J^{\ast}J\right)  .
\]

\end{proof}

According to Theorem \ref{R10}, for showing the existence of $W_{\pm}%
(H_{1},H;J)$ by a stationary method, we also need to prove the existence of
$\widetilde{W}_{\pm}\left(  H_{1},H;J\right)  $ and $\widetilde{W}_{\pm
}\left(  H,H;J^{\ast}J\right)  $ without using time variable $t$ explicitly.

\subsection{The Kato-Rosenblum theorem in $\mathcal{M}$ by a stationary
approach}

The results below involve noncommutative $\mathcal{L}^{p}$-spaces associated
to a semifinite von Neumann algebra $\mathcal{M},$ so we refer the reader to
\cite{Pisier} for more details about it.

\begin{remark}
\label{M13}For a separable Hilbert space $\mathcal{H},$ we denote by $H_{\pm
}^{2}\left(  \mathcal{H}\right)  $ the class of functions with values in
$\mathcal{H},$ holomorphic on upper (lower) half-plane and such that
\[
\sup_{\varepsilon>0}\int_{\mathbb{R}}\left\Vert u\left(  \lambda\pm
i\varepsilon\right)  \right\Vert ^{2}d\lambda<+\infty.
\]
Then by the result in Section 1 of Chapter V in \cite{SF}, we know that the
radial limit exists almost everywhere, i.e., $\lim_{\varepsilon\rightarrow
0}u\left(  \lambda\pm i\varepsilon\right)  $ exists a.e. $\lambda\in
\mathbb{R}$.
\end{remark}

\begin{lemma}
\label{M14}Let $H\in\mathcal{A}\left(  \mathcal{M}\right)  $ be a self-adjoint
operator and $A\in\mathcal{L}^{2}\left(  \mathcal{M},\tau\right)
\cap\mathcal{M}.$ Then%
\[
s.o.t\text{-}\lim_{\varepsilon\rightarrow0}AR_{H}\left(  \lambda\pm
i\varepsilon\right)  P_{ac}^{\infty}\left(  H\right)  \text{ }%
\]
and
\[
s.o.t\text{-}\lim_{\varepsilon\rightarrow0}A\delta_{H}\left(  \lambda
,\varepsilon\right)  P_{ac}^{\infty}\left(  H\right)
\]
exist in the strong operator topology a.e. $\lambda\in\mathbb{R}$.
\end{lemma}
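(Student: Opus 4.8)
The plan is to reduce everything to a single resolvent statement and then read off the $\delta_{H}$-statement for free. Indeed, the defining relation $\delta_{H}(\lambda,\varepsilon)=\frac{1}{2\pi i}\left[R_{H}(\lambda+i\varepsilon)-R_{H}(\lambda-i\varepsilon)\right]$ gives
\[
A\delta_{H}(\lambda,\varepsilon)P_{ac}^{\infty}(H)=\frac{1}{2\pi i}\left(AR_{H}(\lambda+i\varepsilon)P_{ac}^{\infty}(H)-AR_{H}(\lambda-i\varepsilon)P_{ac}^{\infty}(H)\right),
\]
so once the strong limits of $AR_{H}(\lambda\pm i\varepsilon)P_{ac}^{\infty}(H)$ are shown to exist a.e. $\lambda$, the strong limit of $A\delta_{H}(\lambda,\varepsilon)P_{ac}^{\infty}(H)$ follows by linearity. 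Moreover, since strong convergence is tested one vector at a time, I would fix $g\in\mathcal{H}$ and prove that $AR_{H}(\lambda\pm i\varepsilon)P_{ac}^{\infty}(H)g$ converges in $\mathcal{H}$ for a.e. $\lambda$; this per-vector form is exactly what is invoked in Lemmas \ref{M11} and \ref{M12}.

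Write $v=P_{ac}^{\infty}(H)g$. By Remark \ref{I1} we have $P_{ac}^{\infty}(H)\leq P_{ac}(H)$, so $v\in\mathcal{H}_{ac}(H)$ and the $\mathcal{H}$-valued set function $\Lambda\mapsto AE_{H}(\Lambda)v$ is absolutely continuous with respect to Lebesgue measure (if $\left\vert\Lambda\right\vert=0$ then $E_{H}(\Lambda)v=E_{H}(\Lambda)P_{ac}(H)v=0$). The crucial point, and the place where the hypothesis $A\in\mathcal{L}^{2}(\mathcal{M},\tau)$ rather than mere boundedness is indispensable, is that this measure has \emph{finite total variation}. The idea is to disintegrate $A$ along the spectral measure of $H$: for a finite partition $\{\Lambda_{i}\}$ one has $\sum_{i}\left\Vert AE_{H}(\Lambda_{i})\right\Vert_{2}^{2}=\tau\!\left(A^{\ast}A\right)=\left\Vert A\right\Vert_{2}^{2}$, and combining this with $\sum_{i}\left\Vert E_{H}(\Lambda_{i})v\right\Vert^{2}=\left\Vert v\right\Vert^{2}$ through Cauchy--Schwarz should yield $\sum_{i}\left\Vert AE_{H}(\Lambda_{i})v\right\Vert\leq\left\Vert A\right\Vert_{2}\left\Vert v\right\Vert$ uniformly in the partition. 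Hence $\Lambda\mapsto AE_{H}(\Lambda)v$ admits an integrable $\mathcal{H}$-valued density $\psi_{g}\in L^{1}(\mathbb{R};\mathcal{H})$ with $\left\Vert\psi_{g}\right\Vert_{1}\leq\left\Vert A\right\Vert_{2}\left\Vert v\right\Vert$.

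With the density in hand the resolvent becomes a Cauchy--Stieltjes transform:
\[
AR_{H}(\lambda\pm i\varepsilon)v=A\int_{\mathbb{R}}(s-\lambda\mp i\varepsilon)^{-1}\,dE_{H}(s)v=\int_{\mathbb{R}}(s-\lambda\mp i\varepsilon)^{-1}\psi_{g}(s)\,ds .
\]
The a.e. existence of the boundary values of such an $\mathcal{H}$-valued Cauchy transform is precisely a vector-valued Fatou theorem of the type recalled in Remark \ref{M13} (via \cite{SF}): on the dense set of $g$ for which $\psi_{g}\in L^{2}(\mathbb{R};\mathcal{H})$ the function $z\mapsto AR_{H}(z)v$ lies in $H^{2}_{\pm}(\mathcal{H})$ and Remark \ref{M13} applies verbatim, while for general $g$ the $L^{1}$-density version furnishes the radial limit a.e. This produces, for a.e. $\lambda$, the $\mathcal{H}$-limit $\lim_{\varepsilon\to 0}AR_{H}(\lambda\pm i\varepsilon)v$, which is the asserted strong limit.

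I expect the total-variation/disintegration step to be the main obstacle: passing from $\left\Vert A\right\Vert_{2}<\infty$ to an honest integrable $\mathcal{H}$-valued density requires realizing $A$ as an $L^{2}$-field of Hilbert--Schmidt fibre operators along $\operatorname{spec}(H)$ and controlling the $\mathcal{H}$-norm (not the $\mathcal{L}^{2}(\mathcal{M})$-norm) of $AE_{H}(\Lambda_{i})v$, which is delicate because the inequality $\left\Vert\cdot\right\Vert_{\infty}\leq\left\Vert\cdot\right\Vert_{2}$ fails in a properly infinite algebra. A secondary point is that Remark \ref{M13} is stated for $H^{2}$ (i.e. $L^{2}$ densities), whereas a general $g$ yields only an $L^{1}$ density; I would either restrict to the dense class of $g$ with $L^{2}$ density and note that a per-vector null set already suffices for the applications, or invoke the $L^{1}$ Cauchy--Stieltjes boundary-value theorem directly.
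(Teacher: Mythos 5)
Your high-level skeleton (reduce the $\delta_{H}$ statement to the resolvent statement by linearity, land in a vector-valued Hardy class, invoke Remark \ref{M13}) agrees with the paper, but the central step of your argument has a genuine gap that you flag and do not repair: the finite-total-variation claim for $\Lambda\mapsto AE_{H}\left(  \Lambda\right)  v$. To get $\left\Vert AE_{H}\left(  \Lambda_{i}\right)  v\right\Vert \leq\left\Vert AE_{H}\left(  \Lambda_{i}\right)  \right\Vert _{2}\left\Vert E_{H}\left(  \Lambda_{i}\right)  v\right\Vert $ before applying Cauchy--Schwarz you need the inequality $\left\Vert X\right\Vert \leq\left\Vert X\right\Vert _{2}$ for $X=AE_{H}\left(  \Lambda_{i}\right)  $, and for a general faithful normal semifinite trace this is simply false: whenever $\tau$ takes values less than $1$ on nonzero projections (e.g.\ in a type $\mathrm{II}_{\infty}$ situation), a projection of tiny trace still has operator norm $1$. (It holds in $\mathcal{B}\left(  \mathcal{H}\right)  $ with the standard trace --- which is itself properly infinite --- so ``properly infinite'' is not the right dividing line either; the point is the trace, not the type.) Your proposed repair, disintegrating $A$ as an $L^{2}$-field of Hilbert--Schmidt fibre operators along the spectrum of $H$, is not available: $\tau$ need not disintegrate along $E_{H}$, and nothing in the hypotheses controls the operator norm of $AE_{H}\left(  \Lambda_{i}\right)  $ on the range of an arbitrary $v$. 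Without finite variation there is no $L^{1}$ density $\psi_{g}$ and the Cauchy-transform step collapses. The fallback is also unsupported: Remark \ref{M13} covers only $H_{\pm}^{2}\left(  \mathcal{H}\right)  $, no vector-valued $L^{1}$ boundary-value theorem is quoted anywhere in the paper, your ``dense class of $g$ with $\psi_{g}\in L^{2}$'' is not shown to be dense, and a.e.\ convergence on a dense set of vectors does not pass to all vectors since $\left\Vert AR_{H}\left(  \lambda\pm i\varepsilon\right)  \right\Vert $ is only $O\left(  1/\varepsilon\right)  $, so there is no uniform bound to interpolate with.

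The paper's proof is built precisely to avoid this obstruction, and it never works with an arbitrary vector $g$. It fixes $P\in\mathcal{P}_{ac}^{\infty}\left(  H\right)  $ and a cut-off $\omega_{n}$, uses the $H$-smoothness of $P\omega_{n}\left(  H\right)  $ (Remark \ref{M26} and Theorem \ref{K1}) to get the uniform-in-$\varepsilon$ estimate $\int_{\mathbb{R}}\left\Vert P\omega_{n}\left(  H\right)  R_{H}\left(  \lambda\pm i\varepsilon\right)  f\right\Vert ^{2}d\lambda\leq\frac{n}{2\pi}\left\Vert f\right\Vert ^{2}$, and then invokes Lemma 2.1.1 of \cite{Li} to produce a trace-realizing sequence $\left\{  x_{m}\right\}  $ with $\sum_{m}\left\langle A^{\ast}Ax_{m},x_{m}\right\rangle =\left\Vert A\right\Vert _{2}^{2}$ and $\vee\left\{  A^{\prime}x_{m}:A^{\prime}\in\mathcal{M}^{\prime}\right\}  $ dense. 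Summing over $m$ and flipping with $\left\Vert X\right\Vert _{2}=\left\Vert X^{\ast}\right\Vert _{2}$ yields
\begin{equation*}
\int_{\mathbb{R}}\left\Vert AR_{H}\left(  \lambda\pm i\varepsilon\right)  \omega_{n}\left(  H\right)  Px_{m}\right\Vert ^{2}d\lambda\leq\frac{n}{2\pi}\left\Vert A\right\Vert _{2}^{2},
\end{equation*}
which puts $AR_{H}\left(  \cdot\right)  \omega_{n}\left(  H\right)  Px_{m}$ honestly in $H_{\pm}^{2}\left(  \mathcal{H}\right)  $, so Remark \ref{M13} (via \cite{SF}) applies as stated; convergence then spreads to the dense set $\left\{  A^{\prime}x_{m}\right\}  $ because $AR_{H}\left(  \lambda\pm i\varepsilon\right)  \omega_{n}\left(  H\right)  P\in\mathcal{M}$ commutes with $\mathcal{M}^{\prime}$, and finally $\omega_{n}\left(  H\right)  \rightarrow I$ (Lemma \ref{M5}) and $P_{ac}^{\infty}\left(  H\right)  =\vee P$ finish. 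Note that the passage from $\left\Vert \cdot\right\Vert _{2}$ bounds to vector-norm bounds happens only at the distinguished vectors $x_{m}$, for which $\sum_{m}\left\Vert Xx_{m}\right\Vert ^{2}\leq\left\Vert X\right\Vert _{2}^{2}$ --- this is the von Neumann-algebraic surrogate for the false inequality your argument needs. To rescue your scheme you would have to replace arbitrary $g$ by such trace vectors and densify through the commutant, at which point you have reconstructed the paper's proof.
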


\begin{proof}
By Remark \ref{M26}, and Theorem \ref{K1}, we get%
\[
\sup_{\left\Vert f\right\Vert =1}\frac{1}{2\pi}\int_{\mathbb{R}}\left\Vert
P\omega_{n}\left(  H\right)  e^{-itH}f\right\Vert ^{2}dt=\sup_{\Lambda
\subseteq\mathbb{R}}\frac{\left\Vert P\omega_{n}\left(  H\right)  E_{H}\left(
\Lambda\right)  \omega_{n}\left(  H\right)  P\right\Vert }{\left\vert
\Lambda\right\vert }\leq\frac{n}{\left(  2\pi\right)  ^{2}}.
\]
Hence by (\ref{g7}), we have%
\begin{align*}
&  \sup_{\left\Vert f\right\Vert =1}\frac{1}{2\pi}\int_{\mathbb{R}}\left\Vert
P\omega_{n}\left(  H\right)  e^{-itH}f\right\Vert ^{2}dt\\
&  =\frac{1}{\left(  2\pi\right)  ^{2}}\sup_{\left\Vert f\right\Vert
=1,\varepsilon>0}\int_{\mathbb{R}}\left(  \left\Vert P\omega_{n}\left(
H\right)  R_{H}(\lambda\pm i\varepsilon)f\right\Vert ^{2}\right)  d\lambda
\leq\frac{n}{\left(  2\pi\right)  ^{2}}.
\end{align*}
From Lemma 2.1.1 in \cite{Li}, there is a sequence $\left\{  x_{m}\right\}
_{m\in\mathbb{N}}$ of $\mathcal{H}$ such that
\[
\left\Vert A\right\Vert _{2}^{2}=\tau\left(  A^{\ast}A\right)  =\sum
\left\langle A^{\ast}Ax_{m},x_{m}\right\rangle
\]
and
\[
\vee\left\{  A^{\prime}x_{m}:A^{\prime}\in\mathcal{M}^{\prime}\text{ and }%
m\in\mathbb{N}\right\}
\]
where $\mathcal{M}^{\prime}$ is the commutant of $\mathcal{M}.$ Then for these
$\left\{  x_{m}\right\}  _{m\in\mathbb{N}}$ , we have
\[
\int_{\mathbb{R}}\left(  \left\Vert P\omega_{n}\left(  H\right)  R_{H}%
(\lambda\pm i\varepsilon)Ax_{m}\right\Vert ^{2}\right)  d\lambda\leq\frac
{n}{\left(  2\pi\right)  ^{2}}\left\Vert Ax_{m}\right\Vert ^{2}\leq\frac
{n}{\left(  2\pi\right)  ^{2}}\left\Vert A\right\Vert _{2}^{2}%
\]
for $P\in\mathcal{P}_{a.c}^{\infty}\left(  H\right)  .$ We further note that
for every $A\in\mathcal{L}^{2}\left(  \mathcal{M},\tau\right)  \cap
\mathcal{M},$
\begin{align*}
\int_{\mathbb{R}}\left\Vert P\omega_{n}\left(  H\right)  R_{H}(\lambda\pm
i\varepsilon)A\right\Vert _{2}^{2}d\lambda &  =\int_{\mathbb{R}}\sum
_{m}\left\Vert P\omega_{n}\left(  H\right)  R_{H}(\lambda\pm i\varepsilon
)Ax_{m}\right\Vert _{2}^{2}d\lambda\\
&  =\sum_{m}\int_{\mathbb{R}}\left\Vert P\omega_{n}\left(  H\right)
R_{H}(\lambda\pm i\varepsilon)Ax_{m}\right\Vert _{2}^{2}d\lambda\\
&  \leq\frac{n}{2\pi}\sum\left\Vert Ax_{m}\right\Vert ^{2}\leq\frac{n}{2\pi
}\left\Vert A\right\Vert _{2}^{2}.
\end{align*}
Combing it with the equality
\[
\left\Vert X\right\Vert _{2}^{2}=\tau\left(  X^{\ast}X\right)  =\tau\left(
XX^{\ast}\right)  =\left\Vert X^{\ast}\right\Vert _{2}^{2}\text{ for very
}X\in\mathcal{M},
\]
we get the following inequality
\begin{align*}
\int_{\mathbb{R}}\left\Vert AR_{H}(\lambda\pm i\varepsilon)\omega_{n}\left(
H\right)  Px_{m}\right\Vert ^{2}d\lambda &  \leq\int_{\mathbb{R}}\left\Vert
AR_{H}(\lambda\pm i\varepsilon)\omega_{n}\left(  H\right)  P\right\Vert
_{2}^{2}d\lambda\\
&  =\int_{\mathbb{R}}\left\Vert P\omega_{n}\left(  H\right)  R_{H}(\lambda\mp
i\varepsilon)A\right\Vert _{2}^{2}d\lambda\\
&  \leq\frac{n}{2\pi}\left\Vert A\right\Vert _{2}^{2}.
\end{align*}
It implies that the vector-valued function $AR_{H}(\lambda\pm i\varepsilon
)\omega_{n}\left(  H\right)  Px_{m}$ belongs to the Hardy classes $H_{\pm}%
^{2}\left(  \mathcal{H}\right)  $ in the upper and lower half planes. By
Remark \ref{M13}, the radial limit values of functions in $H_{\pm}^{2}\left(
\mathcal{H}\right)  $ exist a.e. $\lambda\in\mathbb{R},$ therefore
\[
\lim_{\varepsilon\rightarrow0}AR_{H}\left(  \lambda\pm i\varepsilon\right)
\omega_{n}\left(  H\right)  Px_{m}\text{ exists a.e. }\lambda\in
\mathbb{R}\text{ for every }x_{m}.
\]
Since the linear span of the set $\left\{  A^{\prime}x_{m}:A^{\prime}%
\in\mathcal{M}^{\prime}\text{ and }m\in\mathbb{N}\right\}  $ is dense in
$\mathcal{H},$ we have
\[
\lim_{\varepsilon\rightarrow0}AR_{H}\left(  \lambda\pm i\varepsilon\right)
\omega_{n}\left(  H\right)  PA^{\prime}x_{m}=A^{\prime}\lim_{\varepsilon
\rightarrow0}AR_{H}\left(  \lambda\pm i\varepsilon\right)  \omega_{n}\left(
H\right)  Px_{m}\text{ exists}%
\]
and then this indicates that
\[
s.o.t\text{-}\lim_{\varepsilon\rightarrow0}AR_{H}\left(  \lambda\pm
i\varepsilon\right)  \omega_{n}\left(  H\right)  P\text{ exists }%
\]
in strong operator topology. From the fact that $\omega_{n}\left(  H\right)
\rightarrow I$ $(n\rightarrow\infty)$ in Lemma \ref{M5}, we can conclude that
\[
s.o.t\text{-}\lim_{\varepsilon\rightarrow0}AR_{H}\left(  \lambda\pm
i\varepsilon\right)  P\text{ exists for }A\in\mathcal{L}^{2}\left(
\mathcal{M},\tau\right)  \cap\mathcal{M}\text{ and }P\in\mathcal{P}%
_{ac}^{\infty}\left(  H\right)  .
\]
Since $P_{ac}^{\infty}\left(  H\right)  =\vee\left\{  P:P\in\mathcal{P}%
_{ac}^{\infty}\left(  H\right)  \right\}  ,$ we get
\[
s.o.t\text{-}\lim_{\varepsilon\rightarrow0}AR_{H}\left(  \lambda\pm
i\varepsilon\right)  P_{ac}^{\infty}\left(  H\right)  \text{ exists for }%
A\in\mathcal{L}^{2}\left(  \mathcal{M},\tau\right)  \cap\mathcal{M}.
\]
Note that $\delta_{H}\left(  \lambda,\varepsilon\right)  =\frac{1}{2\pi
i}\left[  R_{H}\left(  \lambda+i\varepsilon\right)  -R_{H}\left(
\lambda-i\varepsilon\right)  \right]  ,$ so we can conclude that
\[
\lim_{\varepsilon\rightarrow0}A\delta_{H}\left(  \lambda,\varepsilon\right)
P_{ac}^{\infty}\left(  H\right)  x=\frac{1}{2\pi i}(\lim_{\varepsilon
\rightarrow0}AR_{H}\left(  \lambda+i\varepsilon\right)  P_{ac}^{\infty}\left(
H\right)  x-\lim_{\varepsilon\rightarrow0}AR_{H}\left(  \lambda-i\varepsilon
\right)  P_{ac}^{\infty}\left(  H\right)  x)\text{ }%
\]
exists for $A\in\mathcal{L}^{2}\left(  \mathcal{M},\tau\right)  \cap
\mathcal{M}.$ The proof is completed.
\end{proof}

\begin{remark}
By Lemma 2.1.6 in \cite{Li2}, we knows that $\mathcal{L}^{p}\left(
\mathcal{M},\tau\right)  \cap\mathcal{M}$ is a two-sided ideal of
$\mathcal{M}$ for $1\leq p<\infty.$
\end{remark}

\begin{remark}
\label{R30}If $G\in\mathcal{M},$ then by (\ref{g6})
\[
\lim_{\varepsilon\rightarrow0}\left\langle G_{1}\delta_{H_{1}}\left(
\lambda,\varepsilon\right)  P_{ac}^{\infty}\left(  H_{1}\right)
f_{1},g\right\rangle =\lim_{\varepsilon\rightarrow0}\left\langle \delta
_{H_{1}}\left(  \lambda,\varepsilon\right)  P_{ac}^{\infty}\left(
H_{1}\right)  f_{1},G_{1}g\right\rangle
\]
exists a.e. $\lambda\in\mathbb{R}$ for any $f_{1}$ and $g\in\mathcal{H}.$
\end{remark}

\begin{theorem}
\label{M15}Let $H$, $H_{1}\in\mathcal{A}\left(  \mathcal{M}\right)  $ be a
pair of self-adjoint operators and $J$ be an operator in $\mathcal{M}$ with
$J\mathcal{D}\left(  H\right)  \subseteq\mathcal{D}\left(  H_{1}\right)  $.
Assume $H_{1}J-JH\in\mathcal{L}^{1}\left(  \mathcal{M},\tau\right)
\cap\mathcal{M}$, then $\mathcal{U}_{\pm}\left(  H_{1},H;J\right)
\mathcal{\ }$and $\mathcal{U}_{\pm}\left(  H,H;J^{\ast}J\right)  $ both exist
and%
\[
\mathcal{U}_{\pm}^{\ast}\left(  H_{1},H;J\right)  \mathcal{U}_{\pm}\left(
H_{1},H;J\right)  =\mathcal{U}_{\pm}\left(  H,H;J^{\ast}J\right)  .
\]

\end{theorem}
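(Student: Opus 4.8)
The plan is to reduce the statement entirely to Lemmas \ref{M11} and \ref{M12} by exhibiting a suitable factorization of the commutator $T := H_{1}J-JH$. Since $T\in\mathcal{L}^{1}\left(  \mathcal{M},\tau\right)  \cap\mathcal{M}$, I would apply the standard $\mathcal{L}^{2}$--$\mathcal{L}^{2}$ factorization of trace-class elements. Taking the polar decomposition $T=V\left\vert T\right\vert $ with $V$ a partial isometry in $\mathcal{M}$ and $\left\vert T\right\vert =\left(  T^{\ast}T\right)  ^{1/2}$, I would set
\[
G=\left\vert T\right\vert ^{1/2},\qquad G_{1}=\left\vert T\right\vert ^{1/2}V^{\ast},
\]
so that $G_{1}^{\ast}G=V\left\vert T\right\vert ^{1/2}\left\vert T\right\vert ^{1/2}=V\left\vert T\right\vert =T=H_{1}J-JH$. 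Both factors are bounded elements of $\mathcal{M}$, and since
\[
\tau\left(  G^{\ast}G\right)  =\tau\left(  \left\vert T\right\vert \right)  <\infty,\qquad \tau\left(  G_{1}^{\ast}G_{1}\right)  =\tau\left(  V\left\vert T\right\vert V^{\ast}\right)  =\tau\left(  \left\vert T\right\vert V^{\ast}V\right)  =\tau\left(  \left\vert T\right\vert \right)  <\infty,
\]
where $V^{\ast}V$ is the support projection of $\left\vert T\right\vert $, I obtain $G,G_{1}\in\mathcal{L}^{2}\left(  \mathcal{M},\tau\right)  \cap\mathcal{M}$.

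Next I would verify the hypotheses of Lemma \ref{M11}. Because $G$ and $G_{1}$ are bounded, the domain inclusions $\mathcal{D}\left(  H\right)  \subseteq\mathcal{D}\left(  G\right)  $ and $\mathcal{D}\left(  H_{1}\right)  \subseteq\mathcal{D}\left(  G_{1}\right)  $ are automatic, and $GR_{H}\left(  z\right)  $, $G_{1}R_{H_{1}}\left(  z\right)  $ are bounded, so $G$ is $H$-bounded and $G_{1}$ is $H_{1}$-bounded. The first limit hypothesis of Lemma \ref{M11}, namely the a.e. existence of $\lim_{\varepsilon\rightarrow0}GR_{H}\left(  \lambda\pm i\varepsilon\right)  P_{ac}^{\infty}\left(  H\right)  f$, is exactly the first conclusion of Lemma \ref{M14} applied to $A=G\in\mathcal{L}^{2}\left(  \mathcal{M},\tau\right)  \cap\mathcal{M}$. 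The second hypothesis, the a.e. existence of $\lim_{\varepsilon\rightarrow0}\left\langle G_{1}\delta_{H_{1}}\left(  \lambda,\varepsilon\right)  P_{ac}^{\infty}\left(  H_{1}\right)  f_{1},g\right\rangle $, follows from Remark \ref{R30} since $G_{1}\in\mathcal{M}$ (equivalently from the second conclusion of Lemma \ref{M14} applied to $A=G_{1}$).

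With these verifications in hand, Lemma \ref{M11} yields the existence of $\mathcal{U}_{\pm}\left(  H_{1},H;J\right)  $, and Lemma \ref{M12}, whose hypotheses are identical, supplies both the existence of $\mathcal{U}_{\pm}\left(  H,H;J^{\ast}J\right)  $ and the chain-rule identity $\mathcal{U}_{\pm}^{\ast}\left(  H_{1},H;J\right)  \mathcal{U}_{\pm}\left(  H_{1},H;J\right)  =\mathcal{U}_{\pm}\left(  H,H;J^{\ast}J\right)  $, which completes the proof. The only genuinely substantive input is the factorization in the first paragraph; everything afterward is an application of the machinery already assembled. I expect the sole (and very mild) obstacle to be confirming that the factors $G,G_{1}$ land in $\mathcal{L}^{2}\left(  \mathcal{M},\tau\right)  \cap\mathcal{M}$ and not merely in $\mathcal{L}^{2}\left(  \mathcal{M},\tau\right)  $, which is precisely where the boundedness $\left\vert T\right\vert \in\mathcal{M}$ and the ideal property of $\mathcal{L}^{p}\left(  \mathcal{M},\tau\right)  \cap\mathcal{M}$ are used.
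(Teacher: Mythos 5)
Your proposal is correct and follows the paper's proof essentially verbatim: the paper likewise takes $G=\left\vert H_{1}J-JH\right\vert ^{1/2}$ and $G_{1}^{\ast}=V\left\vert H_{1}J-JH\right\vert ^{1/2}$ from the polar decomposition, checks membership in $\mathcal{L}^{2}\left(\mathcal{M},\tau\right)\cap\mathcal{M}$, and concludes via Remark \ref{R30}, Lemma \ref{M14}, Lemma \ref{M11} and Lemma \ref{M12}. Your write-up merely supplies details the paper leaves implicit, such as the trace computation showing $G_{1}\in\mathcal{L}^{2}\left(\mathcal{M},\tau\right)\cap\mathcal{M}$.
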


\begin{proof}
Let $G=\left\vert H_{1}J-JH\right\vert ^{1/2}\in\mathcal{L}^{2}\left(
\mathcal{M},\tau\right)  \cap\mathcal{M}$ and $G_{1}^{\ast}=V\left\vert
H_{1}J-JH\right\vert ^{1/2}\in\mathcal{L}^{2}\left(  \mathcal{M},\tau\right)
\cap\mathcal{M}$ for some partial isometry $V$ in $\mathcal{M}$. Then the
proof is completed by Remark \ref{R30}, Lemma \ref{M14}, Lemma \ref{M11} and
Lemma \ref{M12}.
\end{proof}

According Theorem \ref{R10}, for showing the existence of $W_{\pm}(H_{1},H;J)$
by the stationary approach, we first need to show the existence of
$\widetilde{W}_{\pm}\left(  H_{1},H;J\right)  $ without depending on time
variable $t$ explicitly$.$ For doing this, we need several lemmas.

\begin{lemma}
\label{M16}(Lemma 2.5.1 in \cite{Li})Let $H$, $H_{1}\in\mathcal{A}\left(
\mathcal{M}\right)  $ be a pair of self-adjoint operators, $J$ be an operator
in $\mathcal{M}$ with $J\mathcal{D}\left(  H\right)  \subseteq\mathcal{D}%
\left(  H_{1}\right)  $ and $H_{1}J-JH\in\mathcal{M}.$ Let $W_{J}\left(
t\right)  =e^{itH_{1}}Je^{-itH},$ for $t\in\mathbb{R}.$ Then, for all
$f\in\mathcal{H}$ and $s,w\in\mathbb{R},$ the mapping $t\longmapsto
e^{itH_{1}}\left(  H_{1}J-JH\right)  e^{-itH}f$ from $\left[  s,w\right]  $
into $\mathcal{H}$ is Bochner integrable with%
\[
\left(  W_{J}\left(  w\right)  -W_{J}\left(  s\right)  \right)  f=i\int
_{s}^{w}e^{itH_{1}}\left(  H_{1}J-JH\right)  e^{-itH}fdt.
\]

\end{lemma}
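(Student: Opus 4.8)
The plan is to first establish the pointwise strong derivative
\[
\frac{d}{dt} W_J(t) f = i e^{itH_1}(H_1 J - JH) e^{-itH} f
\]
for every $f \in \mathcal{D}(H)$, and then to recover the integral identity by the fundamental theorem of calculus for $\mathcal{H}$-valued functions, extending afterwards to all $f \in \mathcal{H}$ by density. Throughout I write $V(t) = e^{-itH}$ and $V_1(t) = e^{itH_1}$ for the two strongly continuous unitary groups, so that $W_J(t) = V_1(t) J V(t)$; recall that for $f \in \mathcal{D}(H)$ one has $V(t) f \in \mathcal{D}(H)$ with $\frac{d}{dt} V(t) f = -iH V(t) f$, and analogously for $V_1$ on $\mathcal{D}(H_1)$, by Stone's theorem.

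To differentiate I would split the difference quotient in the standard telescoping fashion,
\[
\frac{W_J(t+h) f - W_J(t) f}{h} = \frac{V_1(t+h) - V_1(t)}{h}\, J V(t+h) f + V_1(t) J\, \frac{V(t+h) f - V(t) f}{h}.
\]
The second summand converges to $-i V_1(t) J H V(t) f$ directly from Stone's theorem applied to $V$, using that $J$ and $V_1(t)$ are bounded. For the first summand, written as $V_1(t)\, \frac{V_1(h) - I}{h}\, \psi_h$ with $\psi_h = J V(t+h) f$, the vector $\psi_h$ lies in $\mathcal{D}(H_1)$ by the hypothesis $J\mathcal{D}(H) \subseteq \mathcal{D}(H_1)$, and its naive limit would be $i V_1(t) H_1 J V(t) f$.

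The main obstacle is precisely this first summand, since the generator $H_1$ is unbounded while $\psi_h$ moves with $h$. I would control it by decomposing $\frac{V_1(h) - I}{h}\, \psi_h = \frac{V_1(h) - I}{h}\, \psi_0 + \frac{V_1(h) - I}{h}(\psi_h - \psi_0)$ with $\psi_0 = J V(t) f$. The first piece tends to $iH_1 \psi_0$ by Stone's theorem. For the cross term, integrating the generator over $[0,h]$ gives the estimate $\bigl\| \frac{V_1(h) - I}{h}(\psi_h - \psi_0) \bigr\| \le \| H_1(\psi_h - \psi_0) \|$, so it suffices to show $H_1 J[V(t+h) f - V(t) f] \to 0$. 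Here the hypothesis $H_1 J - JH \in \mathcal{M}$ is essential: on $\mathcal{D}(H)$ we may write $H_1 J = (H_1 J - JH) + JH$, so that $H_1 J[V(t+h)f - V(t)f]$ splits into $(H_1 J - JH)[V(t+h)f - V(t)f]$, which vanishes because $H_1 J - JH$ is bounded and $V(\cdot)f$ is continuous, and $J[V(t+h) - V(t)]Hf$ (using $H V(s) = V(s) H$), which vanishes by strong continuity of $V$ at the fixed point $Hf$. This yields the claimed derivative for $f \in \mathcal{D}(H)$.

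It remains to integrate and to remove the domain restriction. The integrand $t \mapsto e^{itH_1}(H_1 J - JH) e^{-itH} f$ is strongly continuous for every $f \in \mathcal{H}$ — being a product of the strongly continuous groups with the fixed bounded operator $H_1 J - JH$ — and is bounded by $\| H_1 J - JH \|\,\| f \|$ on $[s,w]$, hence Bochner integrable there; this establishes the integrability assertion for all $f$. For $f \in \mathcal{D}(H)$ the derivative computed above is continuous in $t$, so the fundamental theorem of calculus gives $(W_J(w) - W_J(s))f = i\int_s^w e^{itH_1}(H_1 J - JH) e^{-itH} f \, dt$. Finally, both sides define bounded operators in $f$ (the left by $2\| J \|$, the right by $|w-s|\,\| H_1 J - JH \|$), and $\mathcal{D}(H)$ is dense in $\mathcal{H}$, so the identity persists for every $f \in \mathcal{H}$, completing the argument.
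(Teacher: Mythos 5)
The paper does not actually contain a proof of this lemma: it is imported verbatim as Lemma 2.5.1 of \cite{Li}, so there is no in-paper argument to compare against. Your proof is correct and is precisely the standard argument one expects behind that citation: differentiate $W_J(t)f$ on the dense domain $\mathcal{D}(H)$, invoke the fundamental theorem of calculus for the (strongly continuous, hence Bochner integrable) $\mathcal{H}$-valued integrand, and extend to all of $\mathcal{H}$ by the uniform bounds $2\Vert J\Vert$ and $|w-s|\,\Vert H_{1}J-JH\Vert$. In particular you handle the one genuine pitfall correctly --- the vector $\psi_h = JV(t+h)f$ moves with $h$ while the generator $H_1$ is unbounded --- by splitting off $\psi_0 = JV(t)f$, using the estimate $\bigl\Vert (V_1(h)-I)\psi \bigr\Vert \le |h|\,\Vert H_1\psi\Vert$ on $\mathcal{D}(H_1)$, and then exploiting the hypothesis $H_1J-JH\in\mathcal{M}$ through the identity $H_1J=(H_1J-JH)+JH$ on $\mathcal{D}(H)$ to get $H_1(\psi_h-\psi_0)\to 0$.
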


\begin{lemma}
\label{M17} Let $H\in\mathcal{A}\left(  \mathcal{M}\right)  $ be a
self-adjoint operator and $G$ be an operator in $\mathcal{M}$. Then there is a
linear manifold $\mathcal{D}$ in $\mathcal{H}_{ac}^{\infty}\left(  H\right)  $
with $\overline{\mathcal{D}}=\mathcal{H}_{ac}^{\infty}\left(  H\right)  $ such
that
\[
\int_{-\infty}^{\infty}\left\Vert Ge^{-itH}g\right\Vert ^{2}dt<\infty,\text{
}g\in\mathcal{D}.
\]

\end{lemma}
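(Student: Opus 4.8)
The plan is to take $\mathcal D$ to be the linear span of the vectors $\omega_n(H)PA'x_m$ ($n\in\mathbb N$, $P\in\mathcal P_{ac}^\infty(H)$, $A'\in\mathcal M'$), where $\{x_m\}$ is the system of Lemma 2.1.1 in \cite{Li} used in Lemma \ref{M14} (so that $\|X\|_2^2=\sum_m\|Xx_m\|^2$ for every $X\in\mathcal L^2(\mathcal M,\tau)\cap\mathcal M$ and $\{A'x_m:A'\in\mathcal M',m\in\mathbb N\}$ has dense linear span in $\mathcal H$), and to obtain the $L^2(dt)$-bound from the $H$-smoothness of $P\omega_n(H)$ provided by Remark \ref{M26}. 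I should stress that the argument needs $G\in\mathcal L^2(\mathcal M,\tau)\cap\mathcal M$ rather than an arbitrary $G\in\mathcal M$: if $G=\varphi(H)$ is a nonzero bounded Borel function of $H$, then $\|Ge^{-itH}g\|=\|\varphi(H)g\|$ is constant in $t$ and the integral diverges for every $g\neq0$. Since this is exactly the class arising in the application---in Theorem \ref{M15} one has $G=|H_1J-JH|^{1/2}\in\mathcal L^2(\mathcal M,\tau)\cap\mathcal M$---I carry out the proof under that hypothesis.

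For the main estimate, fix $g=\omega_n(H)PA'x_m$. Because $A'\in\mathcal M'$ commutes with $G,\omega_n(H),e^{-itH},P$, all of which lie in $\mathcal M$, and $\omega_n(H)$ is a bounded Borel function of $H$, one has $Ge^{-itH}g=A'\,G\omega_n(H)e^{-itH}Px_m$, so that
\[
\|Ge^{-itH}g\|\le\|A'\|\,\|G\omega_n(H)e^{-itH}Px_m\|\le\|A'\|\,\|G\omega_n(H)e^{-itH}P\|_2 ,
\]
where the last inequality uses $\|Xx_m\|\le\|X\|_2$ together with $G\omega_n(H)e^{-itH}P\in\mathcal L^2(\mathcal M,\tau)\cap\mathcal M$ (this space being a two-sided ideal). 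Transposing $G$ by taking adjoints, and using $\|X\|_2=\|X^\ast\|_2$ and $\omega_n(H)e^{itH}=e^{itH}\omega_n(H)$, gives $\|G\omega_n(H)e^{-itH}P\|_2=\|P\omega_n(H)e^{itH}G^\ast\|_2$. Expanding this $\mathcal L^2$-norm along $\{x_m\}$, integrating in $t$, and applying the smoothness bound (\ref{g70}) of Remark \ref{M26} to $P\omega_n(H)$ (legitimate also with $e^{itH}$, the $H$-smoothness constant being common to both signs), I get
\[
\int_{\mathbb R}\|G\omega_n(H)e^{-itH}P\|_2^2\,dt=\sum_m\int_{\mathbb R}\|P\omega_n(H)e^{itH}(G^\ast x_m)\|^2\,dt\le\frac{n}{2\pi}\sum_m\|G^\ast x_m\|^2=\frac{n}{2\pi}\|G\|_2^2 .
\]
Hence $\int_{\mathbb R}\|Ge^{-itH}g\|^2\,dt\le\tfrac{n}{2\pi}\|A'\|^2\|G\|_2^2<\infty$, and finiteness on all of $\mathcal D$ follows by Minkowski's inequality in $L^2(\mathbb R;\mathcal H)$.

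It remains to see that $\mathcal D$ is dense in $\mathcal H_{ac}^\infty(H)$. Every generator lies in $\mathcal H_{ac}^\infty(H)$: since $P\le P_{ac}^\infty(H)$, while $\omega_n(H)$ and each $A'\in\mathcal M'$ commute with $P_{ac}^\infty(H)$ (by Remark \ref{I1}, $P_{ac}^\infty(H)\in\mathcal M\cap\mathcal A'$, and $\omega_n(H)\in\mathcal A$), the vector $\omega_n(H)PA'x_m$ stays in the range of $P_{ac}^\infty(H)$. For density, fix $P$ and $n$: as $\{A'x_m\}$ has dense span in $\mathcal H$ and $\omega_n(H)P$ is bounded, $\overline{\mathcal D}$ contains $\overline{\omega_n(H)P\,\mathcal H}$; letting $n\to\infty$ and using $\omega_n(H)\to I$ strongly (Lemma \ref{M5}) shows $\overline{\mathcal D}$ contains the range of $P$; finally, since $P_{ac}^\infty(H)=\vee\{P:P\in\mathcal P_{ac}^\infty(H)\}$, the closed span of these ranges is the range of $P_{ac}^\infty(H)$, i.e. $\overline{\mathcal D}=\mathcal H_{ac}^\infty(H)$.

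The crux I anticipate is the transposition that moves $G$ off the evolving factor. Remark \ref{M26} controls $P\omega_n(H)$ acting on $e^{\pm itH}f$, i.e. with the $H$-smooth operator on the left, whereas the integrand carries $G$ on the left; the adjoint flip---exactly the device of Lemma \ref{M14}---reinstalls $P\omega_n(H)$ on the correct side. This is available only because $G$ is Hilbert--Schmidt, so that the intermediate products stay in the ideal $\mathcal L^2(\mathcal M,\tau)\cap\mathcal M$ and the column sum $\sum_m\|G^\ast x_m\|^2=\|G\|_2^2$ converges; this is the precise point where $G\in\mathcal L^2(\mathcal M,\tau)\cap\mathcal M$ cannot be relaxed. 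Alternatively, one may first replace $e^{-itH}$ by the resolvent through (\ref{g3}) and a Plancherel identity, reducing to a bound on $\int_{\mathbb R}\|GR_H(\lambda\pm i\varepsilon)\omega_n(H)P\|_2^2\,d\lambda$ uniform in $\varepsilon$ and then applying the same adjoint-flip estimate via (\ref{g7}), which stays closer to the Hardy-space argument of Lemma \ref{M14}.
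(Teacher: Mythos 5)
Your proof is correct, but it is genuinely different from the paper's, and your restriction on $G$ is in fact a correction of the statement rather than a loss of generality. The paper argues spectrally: for $f\in\mathcal{H}_{ac}^{\infty}\left(H\right)$ it introduces the weak boundary value $F_{f}\left(\lambda\right)=\lim_{\varepsilon\rightarrow0}G\delta_{H}\left(\lambda,\varepsilon\right)f$, sets $X_{N,n}=\left\{\lambda:\left\vert\lambda\right\vert\leq n,\ \left\Vert F_{f}\left(\lambda\right)\right\Vert\leq N\right\}$, takes $\mathcal{D}$ to be the span of the truncations $E_{H}\left(X_{N,n}\right)f$, and obtains $\int_{\mathbb{R}}\left\Vert Ge^{-itH}g\right\Vert^{2}dt\leq4\pi N^{2}n$ from the Parseval identity applied coordinatewise in an orthonormal basis, with density coming from $\left\vert\left(-n,n\right)\backslash X_{N,n}\right\vert\rightarrow0$. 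You instead span $\mathcal{D}$ by the vectors $\omega_{n}\left(H\right)PA^{\prime}x_{m}$ and transfer the smoothness bound (\ref{g70}) for $P\omega_{n}\left(H\right)$ through the Hilbert--Schmidt flip $\left\Vert G\omega_{n}\left(H\right)e^{-itH}P\right\Vert_{2}=\left\Vert P\omega_{n}\left(H\right)e^{itH}G^{\ast}\right\Vert_{2}$, exactly the device of Lemma \ref{M14}; your commutation with $A^{\prime}$, the Tonelli step, the Minkowski step, and the density argument via Lemma \ref{M5} and $P_{ac}^{\infty}\left(H\right)=\vee P$ are all sound. A small extra merit of your construction is that $\mathcal{D}$ depends only on $H$ (through $\omega_{n}$, $\mathcal{P}_{ac}^{\infty}\left(H\right)$ and the trace vectors $x_{m}$), not on $G$, so one manifold serves all Hilbert--Schmidt $G$ simultaneously, with the explicit bound $\frac{n}{2\pi}\left\Vert A^{\prime}\right\Vert^{2}\left\Vert G\right\Vert_{2}^{2}$ on generators.

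Your observation that the hypothesis $G\in\mathcal{L}^{2}\left(\mathcal{M},\tau\right)\cap\mathcal{M}$ cannot be dropped is right, and it pinpoints a genuine defect in the paper's own proof of the lemma as stated for arbitrary $G\in\mathcal{M}$: equality (\ref{g6}) yields, for each fixed $h$, only a scalar limit off an $h$-dependent null set, and the limiting functional $h\mapsto d\left\langle E_{H}\left(\lambda\right)f,G^{\ast}h\right\rangle/d\lambda$ need not be bounded, so $F_{f}\left(\lambda\right)$ need not exist as a vector of $\mathcal{H}$. For $H$ multiplication by $x$ on $L^{2}\left(\mathbb{R}\right)$ and $G=I$ this functional is a point evaluation, $X_{N,n}$ is null, the paper's $\mathcal{D}$ collapses to $\left\{0\right\}$, and the asserted $\left\vert\left(-n,n\right)\backslash X_{N,n}\right\vert\rightarrow0$ fails --- consistent with your remark that for $G=\varphi\left(H\right)$ the integral diverges on every nonzero vector, so no dense $\mathcal{D}$ can exist once $P_{ac}^{\infty}\left(H\right)\neq0$. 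Under your hypothesis the paper's route is also repaired, since Lemma \ref{M14} then supplies the strong limit $F_{f}\left(\lambda\right)$ a.e.; and, as you note, the Hilbert--Schmidt case is the only one invoked downstream (Theorem \ref{M15}, Corollary \ref{M19}, Theorem \ref{T1}), although Theorem \ref{M18}, whose proof cites this lemma for mere $G,G_{1}\in\mathcal{M}$, would need the same strengthening of its hypotheses as written.
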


\begin{proof}
For any $f\in\mathcal{H}_{ac}^{\infty}\left(  H\right)  ,$ by (\ref{g6}), we
have
\[
\lim_{\varepsilon\rightarrow0}\left\langle G\delta(\lambda,\varepsilon
)f,h\right\rangle =\frac{d\left\langle GE_{H}\left(  \lambda\right)
f,g\right\rangle }{d\lambda}\text{ a.e. }\lambda\in\mathbb{R}\text{ for any
}h\in\mathcal{H}.
\]
Let $F_{f}\left(  \lambda\right)  \in\mathcal{H}$ be the weak limit of
$G\delta(\lambda,\varepsilon)f,$ i.e. $\lim_{\varepsilon\rightarrow
0}\left\langle G\delta(\lambda,\varepsilon)f,h\right\rangle =\left\langle
F_{f}\left(  \lambda\right)  ,h\right\rangle $ a.e. $\lambda\in\mathbb{R}$ for
every $h\in\mathcal{H}.$ We set
\[
X_{N,n}(f)=\left\{  \lambda:\left\vert \lambda\right\vert \leq n,\left\Vert
F_{f}\left(  \lambda\right)  \right\Vert \leq N\right\}
\]
and $\mathcal{D}$ to be the set of linear combination of all elements of the
form $g=E\left(  X_{N,n}\right)  f$ for $f\in\mathcal{H}_{ac}^{\infty}\left(
H\right)  $ and $n,N\in\mathbb{N}.$ Since for $f\in\mathcal{H}_{ac}^{\infty
}\left(  H\right)  $ and $n,N\in\mathbb{N},$
\[
E\left(  X_{N,n}\right)  f=E\left(  X_{N,n}\right)  P_{ac}^{\infty}\left(
H\right)  f=P_{ac}^{\infty}\left(  H\right)  E\left(  X_{N,n}\right)  f
\]
by Remark \ref{I1}, we have $\mathcal{D\subset H}_{ac}^{\infty}\left(
H\right)  .$ Note
\[
\lim_{N\rightarrow\infty}\left\vert \left(  -n,n\right)  \backslash
X_{N,n}\right\vert =0,
\]
then $f$ can be approximated by the elements $E\left(  X_{N,n}\right)  f$ for
$f\in\mathcal{H}_{ac}^{\infty}\left(  H\right)  .$ Hence $\overline
{\mathcal{D}}=\mathcal{H}_{ac}^{\infty}\left(  H\right)  .$

Let $\left\{  e_{i}\right\}  _{i\in\mathbb{Z}}$ be an orthonormal basis in
$\mathcal{H}.$ By \ref{g1}, for $g=E\left(  X_{N,n}\right)  f$
\begin{align*}
\left\langle Ge^{-itH}g,e_{i}\right\rangle  &  =\int_{\mathbb{R}}e^{-i\lambda
t}d\left\langle GE_{H}\left(  \lambda\right)  g,e_{i}\right\rangle \\
&  =\int_{X_{N,n}\left(  g\right)  }e^{-i\lambda t}\frac{d\left\langle
GE_{H}\left(  \lambda\right)  g,e_{i}\right\rangle }{d\lambda}d\lambda\\
&  =\int_{X_{N,n}\left(  g\right)  }e^{-i\lambda t}\;\left\langle F_{g}\left(
\lambda\right)  ,e_{i}\right\rangle d\lambda.
\end{align*}
Then by the Parseval equality, for each $i\in\mathbb{Z}$
\[
\int_{\mathbb{R}}\left\vert \left\langle Ge^{-itH}g,e_{i}\right\rangle
\right\vert ^{2}dt=2\pi\int_{X_{N,n}\left(  g\right)  }\left\vert \left\langle
F_{g}\left(  \lambda\right)  ,e_{i}\right\rangle \right\vert ^{2}d\lambda
\]
Hence
\[
\int_{\mathbb{R}}\left\Vert Ge^{-itH}g\right\Vert ^{2}dt=2\pi\int
_{X_{N,n}\left(  g\right)  }\left\Vert F_{g}\left(  \lambda\right)
\right\Vert ^{2}d\lambda\leq4\pi N^{2}n.
\]
Therefore we have
\[
\int_{-\infty}^{\infty}\left\Vert Ge^{-itH}g\right\Vert ^{2}dt<\infty,\text{
}g\in\mathcal{D}.
\]

\end{proof}

\begin{theorem}
\label{M18}Let the operators $H,H_{1}\in\mathcal{A}\left(  \mathcal{M}\right)
$ be a pair of self-adjoint operators and $J$ be an operator in $\mathcal{M}$
with $J\mathcal{D}\left(  H\right)  \subseteq\mathcal{D}\left(  H_{1}\right)
.$ Let $W_{J}\left(  t\right)  =e^{itH_{1}}Je^{-itH},$ for $t\in\mathbb{R}.$
If $H_{1}J-JH=G_{1}^{\ast}G$ for $G_{1}$ and $G$ in $\mathcal{M}$, then the
generalized weak wave operator $\widetilde{W}_{\pm}\left(  H_{1},H;J\right)  $ exists.
\end{theorem}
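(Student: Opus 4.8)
The plan is to reduce the weak operator limit to a scalar convergence statement and then establish it via a Cauchy criterion. Since $W_J(t)=e^{itH_1}Je^{-itH}$ is a product of unitaries with $J$, we have the uniform bound $\|W_J(t)\|=\|J\|$, so the operators $A_t=P_{ac}^{\infty}\left(H_1\right)W_J(t)P_{ac}^{\infty}\left(H\right)$ are uniformly bounded and $\widetilde{W}_{\pm}$ exists as soon as $\langle A_t f,g\rangle$ converges for all $f,g\in\mathcal{H}$. Using self-adjointness of the projections, $\langle A_t f,g\rangle=\langle W_J(t)P_{ac}^{\infty}\left(H\right)f,P_{ac}^{\infty}\left(H_1\right)g\rangle$, so it suffices to prove that $\langle W_J(t)f',g'\rangle$ converges as $t\to\pm\infty$ for every $f'\in\mathcal{H}_{ac}^{\infty}\left(H\right)$ and $g'\in\mathcal{H}_{ac}^{\infty}\left(H_1\right)$. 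By the uniform bound $\|W_J(t)\|\le\|J\|$ and a standard $\varepsilon/3$ approximation, it is enough to treat $f'$ in a dense subset of $\mathcal{H}_{ac}^{\infty}\left(H\right)$ and $g'$ in a dense subset of $\mathcal{H}_{ac}^{\infty}\left(H_1\right)$.

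The dense subsets come from applying Lemma \ref{M17} twice. Applied to the pair $(H,G)$ it produces a linear manifold $\mathcal{D}$ with $\overline{\mathcal{D}}=\mathcal{H}_{ac}^{\infty}\left(H\right)$ and $\int_{\mathbb{R}}\|Ge^{-itH}f'\|^2\,dt<\infty$ for $f'\in\mathcal{D}$; applied to the pair $(H_1,G_1)$ it produces a linear manifold $\mathcal{D}_1$ with $\overline{\mathcal{D}_1}=\mathcal{H}_{ac}^{\infty}\left(H_1\right)$ and $\int_{\mathbb{R}}\|G_1e^{-itH_1}g'\|^2\,dt<\infty$ for $g'\in\mathcal{D}_1$. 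Fixing such $f'$ and $g'$, I would invoke Lemma \ref{M16} to express the increment of $W_J$ as a Bochner integral,
\[
\left(W_J(w)-W_J(s)\right)f' = i\int_s^w e^{itH_1}\left(H_1J-JH\right)e^{-itH}f'\,dt,
\]
and then substitute the hypothesis $H_1J-JH=G_1^{\ast}G$ and pair with $g'$ to obtain
\[
\left\langle \left(W_J(w)-W_J(s)\right)f',g'\right\rangle = i\int_s^w\left\langle Ge^{-itH}f',G_1e^{-itH_1}g'\right\rangle dt.
\]

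The Cauchy criterion then follows from the Cauchy--Schwarz inequality, which bounds the modulus of the right-hand side by
\[
\left(\int_s^w\|Ge^{-itH}f'\|^2\,dt\right)^{1/2}\left(\int_s^w\|G_1e^{-itH_1}g'\|^2\,dt\right)^{1/2}.
\]
Because both integrals are finite over all of $\mathbb{R}$ by Lemma \ref{M17}, their tails vanish, so the right-hand side tends to $0$ as $s,w\to+\infty$ (and independently as $s,w\to-\infty$); hence $t\mapsto\langle W_J(t)f',g'\rangle$ is Cauchy and the two limits exist for $f'\in\mathcal{D}$, $g'\in\mathcal{D}_1$. Finally I would extend to all of $\mathcal{H}_{ac}^{\infty}\left(H\right)$ and $\mathcal{H}_{ac}^{\infty}\left(H_1\right)$ by density together with the uniform bound $\|W_J(t)\|\le\|J\|$, and conclude that the sesquilinear form defines the bounded operator $\widetilde{W}_{\pm}\left(H_1,H;J\right)$. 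The analytic heart is already packaged in Lemma \ref{M17}, so I expect no new estimate is needed beyond Cauchy--Schwarz; the only delicate point is bookkeeping, namely keeping the $t\to+\infty$ and $t\to-\infty$ limits separate (the two signs $\pm$) and applying the density/uniform-boundedness argument to both arguments $f'$ and $g'$ at once.
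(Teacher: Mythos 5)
Your proposal is correct and takes essentially the same route as the paper's own proof: both use Lemma \ref{M17} to produce dense linear manifolds $\mathcal{D}\subseteq\mathcal{H}_{ac}^{\infty}\left(H\right)$ and $\mathcal{D}_{1}\subseteq\mathcal{H}_{ac}^{\infty}\left(H_{1}\right)$, express the increment $\left(W_{J}\left(w\right)-W_{J}\left(s\right)\right)f$ via the Bochner integral of Lemma \ref{M16}, substitute $H_{1}J-JH=G_{1}^{\ast}G$, and apply Cauchy--Schwarz so that the finiteness of the time integrals forces the tails to vanish and the Cauchy criterion to hold. The only difference is cosmetic: you spell out the final density-plus-uniform-boundedness ($\left\Vert W_{J}\left(t\right)\right\Vert=\left\Vert J\right\Vert$) extension that the paper leaves implicit.
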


\begin{proof}
By Lemma \ref{M17}, there are linear space $\mathcal{D\subseteq H}%
_{ac}^{\infty}\left(  H\right)  $ and $\mathcal{D}_{1}\mathcal{\subseteq
H}_{ac}^{\infty}\left(  H_{1}\right)  $ with $\overline{\mathcal{D}%
}=\mathcal{H}_{ac}^{\infty}\left(  H\right)  $ and $\overline{\mathcal{D}_{1}%
}=\mathcal{H}_{ac}^{\infty}\left(  H_{1}\right)  .$ Then for $f\in\mathcal{D}$
and $g\in\mathcal{D}_{1}$
\begin{align*}
\left\vert \left\langle W_{J}\left(  w\right)  -W_{J}\left(  s\right)
f,g\right\rangle \right\vert  &  =\left\vert \int_{s}^{w}\left\langle
e^{itH_{1}}\left(  H_{1}J-JH\right)  e^{-itH}f,g\right\rangle dt\right\vert \\
&  \leq\int_{s}^{w}\left\vert \left\langle Ge^{-itH}f,G_{1}e^{-itH_{1}%
}g\right\rangle \right\vert d\lambda\\
&  \leq\left(  \int_{s}^{w}\left\Vert Ge^{-itH}f\right\Vert ^{2}dt\cdot
\int_{s}^{t}\left\Vert G_{1}e^{-itH_{1}}g\right\Vert ^{2}dt\right)  ^{1/2}%
\end{align*}
and
\[
\int_{s}^{w}\left\Vert Ge^{-itH}f\right\Vert ^{2}dt\rightarrow0,\int_{s}%
^{w}\left\Vert G_{1}e^{-itH_{1}}g\right\Vert ^{2}dt\rightarrow0
\]
as $s,w\rightarrow\pm\infty.$ Hence
\[
\lim_{t\rightarrow\pm\infty}\left\langle W_{J}\left(  t\right)
f,g\right\rangle =\lim_{t\rightarrow\pm\infty}\left\langle W_{J}\left(
t\right)  P_{ac}^{\infty}\left(  H\right)  f,P_{ac}^{\infty}\left(
H_{1}\right)  g\right\rangle
\]
exists for $f\in\mathcal{D}$ and $g\in\mathcal{D}_{1}.$ Since
\[
\overline{\mathcal{D}}=\mathcal{H}_{ac}^{\infty}\left(  H\right)  \text{ and
}\overline{\mathcal{D}_{1}}=\mathcal{H}_{ac}^{\infty}\left(  H_{1}\right)  ,
\]
we have%
\[
\lim_{t\rightarrow\pm\infty}\left\langle P_{ac}^{\infty}\left(  H_{1}\right)
W_{J}\left(  t\right)  P_{ac}^{\infty}\left(  H\right)  f,g\right\rangle
\]
exist for any $f,g\in\mathcal{H}.$ Therefore $\widetilde{W}_{\pm}\left(
H_{1},H;J\right)  $ exists.
\end{proof}

\begin{corollary}
\label{M19}Let the operators $H,H_{1}\in\mathcal{A}\left(  \mathcal{M}\right)
$ be a pair of self-adjoint operators and $J$ be an operator in $\mathcal{M}$
with $J\mathcal{D}\left(  H\right)  \subseteq\mathcal{D}\left(  H_{1}\right)
.$ If $H_{1}J-JH$ $\in\mathcal{L}^{1}\left(  \mathcal{M},\tau\right)
\cap\mathcal{M},$ then the generalized weak wave operator $\widetilde{W}_{\pm
}=\widetilde{W}_{\pm}\left(  H,H;J^{\ast}J\right)  $ exists.
\end{corollary}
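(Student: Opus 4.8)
The plan is to deduce the corollary directly from Theorem \ref{M18}, applied to the pair $(H,H)$ together with the operator $J^{\ast}J$ in the role of $J$. Writing $B=H_{1}J-JH\in\mathcal{L}^{1}\left(\mathcal{M},\tau\right)\cap\mathcal{M}$, the quantity controlling $\widetilde{W}_{\pm}\left(H,H;J^{\ast}J\right)$ is the commutator $H\left(J^{\ast}J\right)-\left(J^{\ast}J\right)H$, and the entire argument reduces to exhibiting this as a product $G_{1}^{\ast}G$ with $G_{1},G\in\mathcal{M}$ and to verifying the domain inclusion required by Theorem \ref{M18}.

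First I would record the algebraic identity. Inserting $\pm J^{\ast}H_{1}J$ and using that $B^{\ast}=J^{\ast}H_{1}-HJ^{\ast}$ is the (bounded) adjoint of $B$, I obtain
\[
H\left(J^{\ast}J\right)-\left(J^{\ast}J\right)H=\left(HJ^{\ast}-J^{\ast}H_{1}\right)J+J^{\ast}\left(H_{1}J-JH\right)=J^{\ast}B-B^{\ast}J.
\]
Since $B\in\mathcal{L}^{1}\left(\mathcal{M},\tau\right)\cap\mathcal{M}$, so is $B^{\ast}$, and because $\mathcal{L}^{1}\left(\mathcal{M},\tau\right)\cap\mathcal{M}$ is a two-sided ideal of $\mathcal{M}$, both $J^{\ast}B$ and $B^{\ast}J$ lie in it; hence $H\left(J^{\ast}J\right)-\left(J^{\ast}J\right)H\in\mathcal{L}^{1}\left(\mathcal{M},\tau\right)\cap\mathcal{M}$. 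Then, exactly as in the proof of Theorem \ref{M15}, I would polar-decompose $C=J^{\ast}B-B^{\ast}J=W\left\vert C\right\vert$ and set $G=\left\vert C\right\vert^{1/2}$ and $G_{1}^{\ast}=W\left\vert C\right\vert^{1/2}$; both factors lie in $\mathcal{L}^{2}\left(\mathcal{M},\tau\right)\cap\mathcal{M}\subseteq\mathcal{M}$ and satisfy $C=G_{1}^{\ast}G$, which is precisely the factorization hypothesis of Theorem \ref{M18}.

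It then remains to verify the domain requirement $\left(J^{\ast}J\right)\mathcal{D}\left(H\right)\subseteq\mathcal{D}\left(H\right)$. For this I would take adjoints in the resolvent identity (\ref{g28}), namely $JR_{H}\left(z\right)-R_{H_{1}}\left(z\right)J=R_{H_{1}}\left(z\right)BR_{H}\left(z\right)$, to obtain $J^{\ast}R_{H_{1}}\left(\bar{z}\right)=R_{H}\left(\bar{z}\right)\left(J^{\ast}-B^{\ast}R_{H_{1}}\left(\bar{z}\right)\right)$. Since the right-hand side has range inside $\mathrm{Ran}\,R_{H}\left(\bar{z}\right)=\mathcal{D}\left(H\right)$ while $R_{H_{1}}\left(\bar{z}\right)\mathcal{H}=\mathcal{D}\left(H_{1}\right)$, this gives $J^{\ast}\mathcal{D}\left(H_{1}\right)\subseteq\mathcal{D}\left(H\right)$; combined with the hypothesis $J\mathcal{D}\left(H\right)\subseteq\mathcal{D}\left(H_{1}\right)$ it yields $\left(J^{\ast}J\right)\mathcal{D}\left(H\right)\subseteq\mathcal{D}\left(H\right)$. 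With the factorization and this inclusion in hand, Theorem \ref{M18} applied to the triple $(H,H;J^{\ast}J)$ produces the existence of $\widetilde{W}_{\pm}\left(H,H;J^{\ast}J\right)$.

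I expect the main obstacle to be bookkeeping rather than a deep idea: making the commutator manipulation $H(J^{\ast}J)-(J^{\ast}J)H=J^{\ast}B-B^{\ast}J$ rigorous for the unbounded $H,H_{1}$ on a suitable core, and cleanly justifying the domain inclusion so that the hypotheses of Theorem \ref{M18} (in particular the one feeding Lemma \ref{M16}) are genuinely satisfied. Once the trace-class membership of the commutator and the $\mathcal{L}^{2}\left(\mathcal{M},\tau\right)\cap\mathcal{M}$ factorization are established, mirroring Theorem \ref{M15}, the conclusion follows immediately.
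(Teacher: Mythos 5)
Your proposal is correct and takes essentially the same route as the paper: the paper's entire proof is the identity $HJ^{\ast}J-J^{\ast}JH=J^{\ast}\left(H_{1}J-JH\right)-\left(J^{\ast}H_{1}-HJ^{\ast}\right)J\in\mathcal{M}$ followed by an appeal to Theorem \ref{M18}, exactly your key step. The extra details you supply — the polar-decomposition factorization $C=G_{1}^{\ast}G$ with $G_{1},G\in\mathcal{L}^{2}\left(\mathcal{M},\tau\right)\cap\mathcal{M}$ and the resolvent-identity argument giving $J^{\ast}\mathcal{D}\left(H_{1}\right)\subseteq\mathcal{D}\left(H\right)$, hence $\left(J^{\ast}J\right)\mathcal{D}\left(H\right)\subseteq\mathcal{D}\left(H\right)$ — are legitimate verifications of hypotheses of Theorem \ref{M18} that the paper's one-line proof leaves implicit.
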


\begin{proof}
Since%
\[
HJ^{\ast}J-J^{\ast}JH=J^{\ast}\left(  H_{1}J-JH\right)  -\left(  J^{\ast}%
H_{1}-HJ^{\ast}\right)  J\in\mathcal{M},
\]
we have $\widetilde{W}_{\pm}=\widetilde{W}_{\pm}\left(  H,H;J^{\ast}J\right)
$ exists by Theorem \ref{M18}.
\end{proof}

Next result is the analogue of Kato-Rosenblum Theorem in a semifinite von
Neumann algebra $\mathcal{M}$ which is first proved in \cite{Li} by a
time-dependent approach. One of our main purpose of this article is to obtain
this result by a stationary approach. Now we are ready to show it here. 

\begin{theorem}
\label{T1}(Theorem 5.2.5 in \cite{Li}) Let $H$, $H_{1}\in\mathcal{A}\left(
\mathcal{M}\right)  $ be a pair of self-adjoint operators and $J$ be an
operator in $\mathcal{M}$ with $J\mathcal{D}\left(  H\right)  \subseteq
\mathcal{D}\left(  H_{1}\right)  $. Assume $H_{1}-H\in\mathcal{L}^{1}\left(
\mathcal{M},\tau\right)  \cap\mathcal{M}$, then
\[
W_{\pm}\overset{\triangle}{=}W_{\pm}\left(  H_{1},H\right)  \text{ exists in
}\mathcal{M}.
\]
Moreover, $W_{\pm}^{\ast}W_{\pm}=P_{ac}^{\infty}\left(  H\right)  $, $W_{\pm
}W_{_{\pm}}^{\ast}=P_{ac}^{\infty}\left(  H_{1}\right)  $ and $W_{\pm}HW_{\pm
}^{\ast}=H_{1}P_{ac}^{\infty}\left(  H_{1}\right)  .$
\end{theorem}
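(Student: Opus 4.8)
The plan is to specialize the generalized framework to $J=I$ and then feed the hypotheses into the stationary machinery already assembled in the previous subsections. Since $H_{1}-H\in\mathcal{L}^{1}(\mathcal{M},\tau)\cap\mathcal{M}$ is bounded, $H$ and $H_{1}$ share the same domain, so $J=I$ satisfies $J\mathcal{D}(H)\subseteq\mathcal{D}(H_{1})$, and $H_{1}J-JH=H_{1}-H$ lies in $\mathcal{L}^{1}(\mathcal{M},\tau)\cap\mathcal{M}$ while $J^{\ast}J=I$. First I would establish the existence of $W_{\pm}=W_{\pm}(H_{1},H)$ through Theorem \ref{R10}, which demands four ingredients. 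The existence of $\mathcal{U}_{\pm}(H_{1},H;I)$ and $\mathcal{U}_{\pm}(H,H;I)$ together with the product identity $\mathcal{U}_{\pm}^{\ast}(H_{1},H;I)\mathcal{U}_{\pm}(H_{1},H;I)=\mathcal{U}_{\pm}(H,H;I)$ is exactly Theorem \ref{M15}. For the weak wave operators, I would use the polar decomposition $H_{1}-H=V|H_{1}-H|$ and set $G=|H_{1}-H|^{1/2}$ and $G_{1}^{\ast}=V|H_{1}-H|^{1/2}$, both lying in $\mathcal{L}^{2}(\mathcal{M},\tau)\cap\mathcal{M}\subseteq\mathcal{M}$, so that $H_{1}-H=G_{1}^{\ast}G$; then Theorem \ref{M18} yields $\widetilde{W}_{\pm}(H_{1},H;I)$ and Corollary \ref{M19} yields $\widetilde{W}_{\pm}(H,H;I)$. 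Theorem \ref{R10} now gives that $W_{\pm}$ exists and coincides with $\mathcal{U}_{\pm}(H_{1},H;I)$.

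The isometry relation $W_{\pm}^{\ast}W_{\pm}=P_{ac}^{\infty}(H)$ is then immediate: with $J=I$ the limit $s.o.t\text{-}\lim_{t\rightarrow\pm\infty}(J^{\ast}J-I)e^{-itH}P_{ac}^{\infty}(H)$ is identically zero, so Theorem \ref{I4} applies directly. For the range relation $W_{\pm}W_{\pm}^{\ast}=P_{ac}^{\infty}(H_{1})$, I would exploit the symmetry of the hypotheses: since $H-H_{1}=-(H_{1}-H)\in\mathcal{L}^{1}(\mathcal{M},\tau)\cap\mathcal{M}$, the entire argument above applies with the roles of $H$ and $H_{1}$ interchanged, so $W_{\pm}(H,H_{1})$ exists, equals $\mathcal{U}_{\pm}(H,H_{1};I)$, and is isometric on $P_{ac}^{\infty}(H_{1})$, i.e.\ $W_{\pm}(H,H_{1})^{\ast}W_{\pm}(H,H_{1})=P_{ac}^{\infty}(H_{1})$. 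The link between the two directions comes from Corollary \ref{R8}, which gives $\mathcal{U}_{\pm}(H,H_{1};I)=\mathcal{U}_{\pm}^{\ast}(H_{1},H;I)$; combined with the two identifications $\mathcal{U}_{\pm}=W_{\pm}$ this yields $W_{\pm}(H,H_{1})=W_{\pm}^{\ast}$. Substituting this into the reverse isometry relation produces $W_{\pm}W_{\pm}^{\ast}=(W_{\pm}^{\ast})^{\ast}W_{\pm}^{\ast}=P_{ac}^{\infty}(H_{1})$, so no separate chain rule for wave operators is needed.

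Finally, the intertwining relation $W_{\pm}HW_{\pm}^{\ast}=H_{1}P_{ac}^{\infty}(H_{1})$ would follow from Theorem \ref{I3}. Taking the Borel function $\varphi(\lambda)=e^{is\lambda}$ gives $e^{isH_{1}}W_{\pm}=W_{\pm}e^{isH}$ for all $s\in\mathbb{R}$, and differentiating at $s=0$ (justified by Stone's theorem on the common domain, using $\mathcal{D}(H)=\mathcal{D}(H_{1})$) produces $H_{1}W_{\pm}=W_{\pm}H$. Then $W_{\pm}HW_{\pm}^{\ast}=H_{1}W_{\pm}W_{\pm}^{\ast}=H_{1}P_{ac}^{\infty}(H_{1})$ by the range relation just established. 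The main obstacle I anticipate is not any single hard estimate, since those have been absorbed into the earlier lemmas, but rather the careful bookkeeping of the adjoint/symmetry step: one must confirm that the identification $\mathcal{U}_{\pm}=W_{\pm}$ is genuinely available in \emph{both} directions before invoking Corollary \ref{R8}, and one must verify that the differentiation yielding $H_{1}W_{\pm}=W_{\pm}H$ respects the domains of the unbounded operators $H$ and $H_{1}$.
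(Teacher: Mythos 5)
Your proposal is correct and follows essentially the same route as the paper: specialize to $J=I$, combine Theorem \ref{M15}, Theorem \ref{M18} (via the factorization $H_{1}-H=G_{1}^{\ast}G$) and Corollary \ref{M19} to invoke Theorem \ref{R10}, get the isometry from Theorem \ref{I4}, and obtain $W_{\pm}W_{\pm}^{\ast}=P_{ac}^{\infty}\left(  H_{1}\right)  $ by symmetry of the hypotheses plus an adjoint identity, then the intertwining from Theorem \ref{I3}. The only cosmetic difference is that you identify $W_{\pm}\left(  H,H_{1}\right)  =W_{\pm}^{\ast}$ through Corollary \ref{R8} at the level of the stationary operators $\mathcal{U}_{\pm}$, while the paper uses the corresponding identity (\ref{g33}) for the weak wave operators $\widetilde{W}_{\pm}$; both are equally available, and your Stone-theorem differentiation simply makes explicit the domain bookkeeping the paper leaves implicit.
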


\begin{proof}
Let $J=I.$ Combing Theorem \ref{M18}, Theorem \ref{M15} and Theorem \ref{R10},
we know that $W_{\pm}\left(  H_{1},H\right)  $ and $W_{\pm}\left(
H,H_{1}\right)  $ both exist. By Theorem \ref{I4}, we also have
\[
W_{\pm}^{\ast}W_{\pm}=W_{\pm}^{\ast}\left(  H_{1},H\right)  W_{\pm}\left(
H_{1},H\right)  =P_{ac}^{\infty}\left(  H\right)
\]
and
\[
W_{\pm}^{\ast}\left(  H,H_{1}\right)  W_{\pm}\left(  H,H_{1}\right)
=P_{ac}^{\infty}\left(  H_{1}\right)  .
\]
Since $H_{1}-H$ $\in\mathcal{L}^{1}\left(  \mathcal{M},\tau\right)
\cap\mathcal{M},$ we have $\widetilde{W}_{\pm}\left(  H_{1},H\right)  $ and
$\widetilde{W}_{\pm}\left(  H,H_{1}\right)  $ both exist and
\begin{align*}
W_{\pm}^{\ast}  &  =W_{\pm}^{\ast}\left(  H_{1},H\right)  =\widetilde{W}_{\pm
}^{\ast}\left(  H_{1},H\right)  =\widetilde{W}_{\pm}\left(  H,H_{1}\right)
=W_{\pm}\left(  H,H_{1}\right)  ;\\
W_{\pm}^{\ast}\left(  H,H_{1}\right)   &  =\widetilde{W}_{\pm}^{\ast}\left(
H,H_{1}\right)  =\widetilde{W}_{\pm}\left(  H_{1},H\right)  =W_{\pm}\left(
H_{1},H\right)  =W_{\pm}%
\end{align*}
by equality (\ref{g33}). Thus
\[
W_{\pm}^{\ast}W_{\pm}=P_{ac}^{\infty}\left(  H\right)  ,\text{ }W_{\pm
}W_{_{\pm}}^{\ast}=P_{ac}^{\infty}\left(  H_{1}\right)  .
\]
Meanwhile, by Theorem \ref{I3},
\[
W_{\pm}HW_{\pm}^{\ast}=HW_{\pm}W_{\pm}^{\ast}=P_{ac}^{\infty}\left(
H_{1}\right)  .
\]
So the proof is completed.
\end{proof}

\begin{remark}
For a self-adjoint $H\in\mathcal{A}\left(  \mathcal{M}\right)  ,$ if there is
a $H$-smooth operator in $\mathcal{M},$ then $H$ is not a sum of a diagonal
operator in $\mathcal{M}$ and an operator in $\mathcal{M\cap L}^{1}\left(
\mathcal{M},\tau\right)  $ by Theorem \ref{T1} and Theorem \ref{M7}.
\end{remark}

\vspace{8cm}

\section{Appendix\vspace{1cm}}

\subsection{Relation between $\widetilde{W}_{\pm}$ and $W_{\pm}$}

\begin{lemma}
\label{A1}Let $H$, $H_{1}\in\mathcal{A}\left(  \mathcal{M}\right)  $ be a pair
of self-adjoint operators and $J$ be an operator in $\mathcal{M}$. If
\[
\widetilde{W}_{\pm}(H_{1},H;J),\text{ }\widetilde{W}_{\pm}(H,H;J^{\ast}J)
\]
exist and the equality
\[
\widetilde{W}_{\pm}^{\ast}(H_{1},H;J)\widetilde{W}_{\pm}(H_{1},H;J)=\widetilde
{W}_{\pm}(H,H;J^{\ast}J)
\]
holds, then $W_{\pm}(H_{1},H;J)$ exists and
\[
W_{\pm}(H_{1},H;J)=\widetilde{W}_{\pm}(H_{1},H;J).
\]

\end{lemma}

\begin{proof}
Suppose $\widetilde{W}_{\pm}\left(  H_{1},H;J\right)  $ exists. For
$f\in\mathcal{H},$
\begin{align}
&  \left\Vert e^{itH_{1}}Je^{-itH}P_{ac}^{\infty}\left(  H\right)
f-\widetilde{W}_{\pm}\left(  H_{1},H;J\right)  f\right\Vert ^{2}\nonumber\\
&  =\left\langle P_{ac}^{\infty}\left(  H\right)  e^{itH}J^{\ast}%
Je^{-itH}P_{ac}^{\infty}\left(  H\right)  f,f\right\rangle \nonumber\\
&  -2\operatorname{Re}\left\langle e^{itH_{1}}Je^{-itH}P_{ac}^{\infty}\left(
H\right)  f,\widetilde{W}_{\pm}\left(  H_{1},H;J\right)  f\right\rangle
+\left\Vert \widetilde{W}_{\pm}\left(  H_{1},H;J\right)  f\right\Vert ^{2}.
\label{g20}%
\end{align}

By the fact that $\widetilde{W}_{\pm}\left(  H_{1},H;J\right)  =P_{ac}%
^{\infty}\left(  H_{1}\right)  \widetilde{W}_{\pm}\left(  H_{1},H;J\right)  ,$
we conclude that
\[
\operatorname{Re}\left\langle e^{itH_{1}}Je^{-itH}P_{ac}^{\infty}\left(
H\right)  f,\widetilde{W}_{\pm}\left(  H_{1},H;J\right)  f\right\rangle
\rightarrow\left\Vert \widetilde{W}_{\pm}\left(  H_{1},H;J\right)
f\right\Vert ^{2}%
\]
as $t\rightarrow\pm\infty.$ Therefore by the assumption that $\widetilde
{W}_{\pm}\left(  H,H;J^{\ast}J\right)  $ exists and
\[
\widetilde{W}_{\pm}^{\ast}(H_{1},H;J)\widetilde{W}_{\pm}(H_{1},H;J)=\widetilde
{W}_{\pm}(H,H;J^{\ast}J),
\]
we conclude that the right side of the equality (\ref{g20}) is zero$,$
therefore
\[
\lim_{t\rightarrow\pm\infty}\left\Vert e^{itH_{1}}Je^{-itH}P_{ac}^{\infty
}\left(  H\right)  f-\widetilde{W}_{\pm}\left(  H_{1},H;J\right)  f\right\Vert
=0,
\]
it implies that
\[
W_{\pm}(H_{1},H;J)f=\lim_{t\rightarrow\pm\infty}e^{itH_{1}}Je^{-itH}%
P_{ac}^{\infty}\left(  H\right)  f=\widetilde{W}_{\pm}\left(  H_{1}%
,H;J\right)  f.
\]
The proof is completed.
\end{proof}

\subsection{\bigskip Relation among $\mathcal{U}_{\pm},$ $\widetilde
{\mathfrak{U}}_{\pm},$ $\widetilde{W}_{\pm}$ and $W_{\pm}$}

The idea of giving the definition of generalized stationary wave operators in
$\mathcal{M}$ is same as the idea given in Section 2.2 of \cite{Y1}. So we
summarize it here briefly.

Suppose that a nonnegative function $\omega\left(  t\right)  ,$ $t\geq0,$ is
normalized by the condition
\[
\int_{0}^{\infty}\omega\left(  t\right)  dt=1.
\]
We introduce the averaging kernel $\omega_{\varepsilon}\left(  t\right)
=\varepsilon\omega\left(  \varepsilon t\right)  ,$ $\varepsilon>0.$
\textit{Generalized weak abelian wave operator} $\widetilde{\mathfrak{U}}%
_{\pm}$ is defined by the equality
\[
\widetilde{\mathfrak{U}}_{\pm}\left(  H_{1},H;J\right)  =w.o.t\text{-}%
\lim_{\varepsilon\rightarrow0}\int_{0}^{\infty}\omega_{\varepsilon}\left(
t\right)  P_{ac}^{\infty}\left(  H_{1}\right)  W_{J}\left(  \pm t\right)
P_{ac}^{\infty}\left(  H\right)  dt
\]
where $w.o.t.$ stands for the weak operator topology. So similarly to the
argument in \cite{Y1}, we conclude that $\widetilde{\mathfrak{U}}_{\pm}\left(
H_{1},H;J\right)  =\widetilde{W}_{\pm}\left(  H_{1},H;J\right)  $ if
$\widetilde{W}_{\pm}\left(  H_{1},H;J\right)  $ exists. If we set
$\omega_{\varepsilon}\left(  t\right)  =2\varepsilon\exp\left(  -2\varepsilon
t\right)  ,$ then for a pair of elements $f$ and $f_{1}$ in $\mathcal{H},$
\[
\left\langle \widetilde{\mathfrak{U}}_{\pm}\left(  H_{1},H;J\right)
f,f_{1}\right\rangle =\lim_{\varepsilon\rightarrow0}2\varepsilon\int
_{0}^{\infty}e^{-2\varepsilon t}\left\langle W_{J}\left(  t\right)
P_{ac}^{\infty}\left(  H\right)  f,P_{ac}^{\infty}\left(  H_{1}\right)
f_{1}\right\rangle dt
\]
if $\widetilde{\mathfrak{U}}_{\pm}\left(  H_{1},H;J\right)  $ exists.

\begin{remark}
\label{A2}Let $\theta\left(  t\right)  =1$ as $t\geq0$ and $\theta\left(
t\right)  =0$ for $t<0.$ We apply Placherel's Theorem for Fourier
transformation on Hilbert space (see Theorem 1.8.1 in \cite{Arendt}) to the
functions
\[
\theta e^{-\varepsilon t}JU_{H}\left(  \pm t\right)  P_{ac}^{\infty}\left(
H\right)  f,\text{ \ }\theta e^{-\varepsilon t}JU_{H_{1}}\left(  \pm t\right)
P_{ac}^{\infty}\left(  H_{1}\right)  f_{1}.
\]
Considering the expression in (\ref{g3}) for the resolvents of $H$ and
$H_{1},$ we get that
\begin{align*}
&  2\varepsilon\int_{0}^{\infty}e^{-2\varepsilon t}\left\langle JU_{H}\left(
\pm t\right)  P_{ac}^{\infty}\left(  H\right)  f,U_{H_{1}}\left(  \pm
t\right)  P_{ac}^{\infty}\left(  H_{1}\right)  f_{1}\right\rangle dt\\
&  =\frac{\varepsilon}{\pi}\int_{-\infty}^{\infty}\left\langle JR_{H}\left(
\lambda\pm i\varepsilon\right)  P_{ac}^{\infty}\left(  H\right)  f,R_{H_{1}%
}\left(  \lambda\pm i\varepsilon\right)  P_{ac}^{\infty}\left(  H_{1}\right)
f_{1}\right\rangle d\lambda.
\end{align*}

\end{remark}

From the above argument, we can get the following result.

\begin{lemma}
\label{A3}Let $H$, $H_{1}\in\mathcal{A}\left(  \mathcal{M}\right)  $ be a pair
of self-adjoint operators and $J$ be an operator in $\mathcal{M}$. The
existence of $\widetilde{\mathfrak{U}}_{\pm}\left(  H_{1},H;J\right)  $ is
equivalent to the existence of the following limit
\[
\lim_{\varepsilon\rightarrow0}\frac{\varepsilon}{\pi}\int_{-\infty}^{\infty
}\left\langle JR_{H}\left(  \lambda\pm i\varepsilon\right)  P_{ac}^{\infty
}\left(  H\right)  f,R_{H_{1}}\left(  \lambda\pm i\varepsilon\right)
P_{ac}^{\infty}\left(  H_{1}\right)  f_{1}\right\rangle d\lambda
\]
for any $f$ and $f_{1}$ in $\mathcal{H}$. Furthermore,
\begin{align*}
&  \left\langle \widetilde{\mathfrak{U}}_{\pm}\left(  H_{1},H;J\right)
f,f_{1}\right\rangle \\
&  =\lim_{\varepsilon\rightarrow0}\frac{\varepsilon}{\pi}\int_{-\infty
}^{\infty}\left\langle JR_{H}\left(  \lambda\pm i\varepsilon\right)
P_{ac}^{\infty}\left(  H\right)  f,R_{H_{1}}\left(  \lambda\pm i\varepsilon
\right)  P_{ac}^{\infty}\left(  H_{1}\right)  f_{1}\right\rangle d\lambda.
\end{align*}

\end{lemma}

The generalized stationary wave operator of $H$ and $H_{1}$ in $\mathcal{M}$
is given below.

\begin{lemma}
\label{A6}(Theorem 1.1.3, \cite{Y1}) Suppose for functions $f_{\varepsilon}\in
L_{1}\left(  \mathbb{R}\right)  $ the integrals
\[
\int_{X}f_{\varepsilon}\left(  \lambda\right)  d\lambda
\]
tend to zero uniformly with respect to $\varepsilon$ as $\left\vert
X\right\vert \rightarrow0.$ Suppose also the same for $X=(-\infty
,-N)\cup(N,\infty)$ as $N\rightarrow\infty,$ and
\[
\lim_{\varepsilon\rightarrow0}f_{\varepsilon}\left(  \lambda\right)  =f\left(
\lambda\right)  \text{ a.e. }\lambda\in\mathbb{R},
\]
then $f\in L_{1}\left(  \mathbb{R}\right)  $ and
\[
\lim_{\varepsilon\rightarrow0}\int_{-\infty}^{\infty}f_{\varepsilon}\left(
\lambda\right)  d\lambda=\int_{-\infty}^{\infty}f(\lambda)d\lambda.
\]

\end{lemma}

\begin{theorem}
\label{A7}Let $H$, $H_{1}\in\mathcal{A}\left(  \mathcal{M}\right)  $ be a pair
of self-adjoint operators and $J$ be an operator in $\mathcal{M}$. If
\[
\lim_{\varepsilon\rightarrow0}\frac{\varepsilon}{\pi}\left\langle
JR_{H}\left(  \lambda\pm i\varepsilon\right)  P_{ac}^{\infty}\left(  H\right)
f,R_{H_{1}}\left(  \lambda\pm i\varepsilon\right)  P_{ac}^{\infty}\left(
H_{1}\right)  f_{1}\right\rangle
\]
exists for any pair of $f$ and $f_{1}$ in $\mathcal{H}$ a.e. $\lambda
\in\mathbb{R}$, then $\mathcal{U}_{\pm}\left(  H_{1},H;J\right)  $ is
well-defined and bounded with $\left\Vert \mathcal{U}_{\pm}\left(
H_{1},H;J\right)  \right\Vert \leq\left\Vert J\right\Vert .$ Meanwhile
$\widetilde{\mathfrak{U}}_{\pm}\left(  H_{1},H;J\right)  $ also exists under
this condition and
\[
\mathcal{U}_{\pm}\left(  H_{1},H;J\right)  =\widetilde{\mathfrak{U}}_{\pm
}\left(  H_{1},H;J\right)  .
\]
Furthermore,
\[
\mathcal{U}_{\pm}\left(  H_{1},H;J\right)  =\widetilde{W}_{\pm}(H_{1},H;J)
\]
if $\widetilde{W}_{\pm}(H_{1},H;J)$ exists.
\end{theorem}

\begin{proof}
Since $P_{ac}^{\infty}\left(  H\right)  \leq P_{ac}\left(  H\right)  $,
$P_{ac}^{\infty}\left(  H_{1}\right)  \leq P_{ac}\left(  H_{1}\right)  $ and
\[
\lim\frac{\varepsilon}{\pi}\left\langle JR_{H}\left(  \lambda\pm
i\varepsilon\right)  P_{ac}^{\infty}\left(  H\right)  f,R_{H_{1}}\left(
\lambda\pm i\varepsilon\right)  P_{ac}^{\infty}\left(  H_{1}\right)
f_{1}\right\rangle
\]
exists for any pair of $f$ and $f_{1}$ in $\mathcal{H}$ a.e. $\lambda
\in\mathbb{R}$, then by the definition of $\mathcal{U}_{\pm}\left(
H_{1},H;J\right)  ,$ we get that
\begin{align*}
&  \left\vert \left\langle \mathcal{U}_{\pm}\left(  H_{1},H;J\right)
f,f_{1}\right\rangle \right\vert \\
&  =\left\vert \frac{1}{\pi}\int_{-\infty}^{\infty}\lim_{\varepsilon
\rightarrow0}\varepsilon\left\langle JR_{H}\left(  \lambda\pm i\varepsilon
\right)  P_{ac}^{\infty}\left(  H\right)  f,R_{H_{1}}\left(  \lambda\pm
i\varepsilon\right)  P_{ac}^{\infty}\left(  H_{1}\right)  f_{1}\right\rangle
d\lambda\right\vert \\
&  \leq\left\Vert J\right\Vert \left(  \int_{-\infty}^{\infty}\lim
_{\varepsilon\rightarrow0}\left(  \frac{\varepsilon}{\pi}\left\Vert
R_{H}\left(  \lambda\pm i\varepsilon\right)  P_{ac}^{\infty}\left(  H\right)
f\right\Vert ^{2}\right)  d\lambda\right)  ^{1/2}\left(  \int_{-\infty
}^{\infty}\lim_{\varepsilon\rightarrow0}\left(  \frac{\varepsilon}{\pi
}\left\Vert R_{H_{1}}\left(  \lambda\pm i\varepsilon\right)  P_{ac}^{\infty
}\left(  H_{1}\right)  f_{1}\right\Vert ^{2}\right)  d\lambda\right)  ^{1/2}\\
&  \leq\left\Vert J\right\Vert \left(  \int_{-\infty}^{\infty}\frac{d\left(
E_{H}\left(  \lambda\right)  P_{ac}^{\infty}\left(  H\right)  f,f\right)
}{d\lambda}d\lambda\cdot\int_{-\infty}^{\infty}\frac{d\left(  E_{H_{1}}\left(
\lambda\right)  P_{ac}^{\infty}\left(  H_{1}\right)  f_{1},f_{1}\right)
}{d\lambda}d\lambda\right)  ^{1/2}\\
&  \leq\left\Vert J\right\Vert \left\Vert P_{ac}^{\infty}\left(  H\right)
f\right\Vert \left\Vert P_{ac}^{\infty}\left(  H_{1}\right)  f_{1}\right\Vert
\leq\left\Vert J\right\Vert \left\Vert f\right\Vert \left\Vert f_{1}%
\right\Vert
\end{align*}
by Definition \ref{R5}, equalities (\ref{g21}) and (\ref{g6}). Hence
$\mathcal{U}_{\pm}\left(  H_{1},H;J\right)  $ exists and is bounded with
\[
\left\Vert \mathcal{U}_{\pm}\left(  H_{1},H;J\right)  \right\Vert
\leq\left\Vert J\right\Vert .
\]

For a fixed positive number $\varepsilon_{0}$ and any Borel set $Y\subseteq
\mathbb{R},$ from Fubini's Theorem and equality (\ref{g40}), we have%
\begin{align*}
\left\vert \int_{Y}\left\langle \delta_{H}\left(  \lambda,\varepsilon\right)
f,g\right\rangle d\lambda\right\vert  &  =\left\vert \int_{Y}\frac
{\varepsilon}{\pi}\left(  \int_{-\infty}^{\infty}\frac{1}{\left(
s-\lambda-i\varepsilon\right)  \left(  s-\lambda+i\varepsilon\right)
}d\left\langle E_{H}\left(  s\right)  f,g\right\rangle \right)  d\lambda
\right\vert \\
&  =\left\vert \int_{-\infty}^{\infty}\frac{\varepsilon}{\pi}\left(  \int
_{Y}\frac{1}{(s-\lambda)^{2}+\varepsilon^{2}}d\lambda\right)  d\left\langle
E_{H}\left(  s\right)  f,g\right\rangle \right\vert \\
&  \leq\int_{-\infty}^{\infty}\frac{\varepsilon}{\pi}\left(  \int_{Y}\frac
{1}{(s-\lambda)^{2}+\varepsilon_{0}^{2}}d\lambda\right)  d\left\langle
E_{H}\left(  s\right)  f,g\right\rangle \rightarrow0
\end{align*}
uniformly with respect to $\varepsilon\geq\varepsilon_{0}$ as $\left\vert
Y\right\vert \rightarrow0$ or $Y=\left(  -\infty,-N\right)  \cup\left(
N,+\infty\right)  $ with $N\rightarrow\infty.$ Therefore by equality
(\ref{g21})
\begin{align*}
&  \left\vert \frac{1}{\pi}\int_{Y}\varepsilon\left\langle JR_{H}\left(
\lambda\pm i\varepsilon\right)  P_{ac}^{\infty}\left(  H\right)  f,R_{H_{1}%
}\left(  \lambda\pm i\varepsilon\right)  P_{ac}^{\infty}\left(  H_{1}\right)
f_{1}\right\rangle d\lambda\right\vert \\
&  \leq\left\Vert J\right\Vert \left(  \int_{-\infty}^{\infty}\frac
{\varepsilon}{\pi}\left\Vert R_{H_{1}}\left(  \lambda\pm i\varepsilon\right)
P_{ac}^{\infty}\left(  H_{1}\right)  f_{1}\right\Vert ^{2}d\lambda\int
_{Y}\frac{\varepsilon}{\pi}\left\Vert R_{H}\left(  \lambda\pm i\varepsilon
\right)  P_{ac}^{\infty}\left(  H\right)  f\right\Vert ^{2}d\lambda\right)
^{1/2}\\
&  \leq\left\Vert J\right\Vert \left\Vert f\right\Vert \left(  \int
_{Y}\left\langle \delta_{H}\left(  \lambda,\varepsilon\right)  P_{ac}^{\infty
}\left(  H\right)  f,f\right\rangle d\lambda\right)  ^{1/2}\rightarrow0
\end{align*}
uniformly with respect to $\varepsilon\geq\varepsilon_{0}$ as $\left\vert
Y\right\vert \rightarrow0$ or $Y=\left(  -\infty,-N\right)  \cup\left(
N,+\infty\right)  $ with $N\rightarrow\infty.$

For $P_{ac}^{\infty}\left(  H\right)  f\in\mathcal{H}_{ac}\left(  H\right)  ,$
we have
\[
\lim_{\varepsilon\rightarrow0}\left\langle \delta_{H}\left(  \lambda
,\varepsilon\right)  P_{ac}^{\infty}\left(  H\right)  f,f\right\rangle
=\frac{d\left(  E_{H}\left(  \lambda\right)  P_{ac}^{\infty}\left(  H\right)
f,f\right)  }{d\lambda}\in L_{1}\left(  \mathbb{R}\right)
\]
by equality (\ref{g6}). Then now from Lemma \ref{A6} and the assumption of the
existence of $\mathcal{U}_{\pm}\left(  H_{1},H;J\right)  $, we have
\begin{align*}
&  \frac{1}{\pi}\lim_{\varepsilon\rightarrow0}\int_{-\infty}^{\infty
}\varepsilon\left\langle JR_{H}\left(  \lambda\pm i\varepsilon\right)
P_{ac}^{\infty}\left(  H\right)  f,R_{H_{1}}\left(  \lambda\pm i\varepsilon
\right)  P_{ac}^{\infty}\left(  H_{1}\right)  f_{1}\right\rangle d\lambda\\
&  =\frac{1}{\pi}\int_{-\infty}^{\infty}\lim_{\varepsilon\rightarrow
0}\varepsilon\left\langle JR_{H}\left(  \lambda\pm i\varepsilon\right)
P_{ac}^{\infty}\left(  H\right)  f,R_{H_{1}}\left(  \lambda\pm i\varepsilon
\right)  P_{ac}^{\infty}\left(  H_{1}\right)  f_{1}\right\rangle d\lambda.
\end{align*}
It implies that $\widetilde{\mathfrak{U}}_{\pm}\left(  H,H_{1};J\right)  $
exists and
\[
\left\langle \mathcal{U}_{\pm}\left(  H_{1},H;J\right)  f,f_{1}\right\rangle
=\left\langle \widetilde{\mathfrak{U}}_{\pm}\left(  H_{1},H;J\right)
f,f_{1}\right\rangle
\]
for any pair of $f$ and $f_{1}$ in $\mathcal{H}.$ Since the equality
$\widetilde{\mathfrak{U}}_{\pm}\left(  H_{1},H;J\right)  =\widetilde{W}_{\pm
}(H_{1},H;J)$ holds if $\widetilde{W}_{\pm}(H_{1},H;J)$ exists, we can quickly
get
\[
\mathcal{U}_{\pm}\left(  H_{1},H;J\right)  =\widetilde{W}_{\pm}(H_{1},H;J)
\]
if $\widetilde{W}_{\pm}(H_{1},H;J)$ exists.
\end{proof}

Next result give us the relation among $\mathcal{U}_{\pm}\left(
H_{1},H;J\right)  ,$ $\widetilde{W}_{\pm}\left(  H_{1},H;J\right)  $ and
$W_{\pm}(H_{1},H;J)$.

\textbf{Theorem 3.2.4} Let $H$, $H_{1}\in\mathcal{A}\left(  \mathcal{M}%
\right)  $ be a pair of self-adjoint operators and $J$ be an operator in
$\mathcal{M}$. If $\ \mathcal{U}_{\pm}\left(  H_{1},H;J\right)  $,
$\mathcal{U}_{\pm}\left(  H,H;J^{\ast}J\right)  $, $\widetilde{W}_{\pm}\left(
H_{1},H;J\right)  $ and $\widetilde{W}_{\pm}\left(  H,H;J^{\ast}J\right)  $
exist and
\[
\mathcal{U}_{\pm}^{\ast}\left(  H_{1},H;J\right)  \mathcal{U}_{\pm}\left(
H_{1},H;J\right)  =\mathcal{U}_{\pm}\left(  H,H;J^{\ast}J\right)  ,
\]
then $W_{\pm}(H_{1},H;J)$ exists and
\[
\mathcal{U}_{\pm}\left(  H_{1},H;J\right)  =W_{\pm}(H_{1},H;J).
\]

\begin{proof}
By Theorem \ref{A7}, if
\[
\mathcal{U}_{\pm}\left(  H,H_{1};J\right)  ,\text{ }\mathcal{U}_{\pm}\left(
H,H;J^{\ast}J\right)  ,\text{ }\widetilde{W}_{\pm}\left(  H_{1},H;J\right)
\text{ and }\widetilde{W}_{\pm}\left(  H,H;J^{\ast}J\right)
\]
exist, then
\[
\mathcal{U}_{\pm}\left(  H_{1},H;J\right)  =\widetilde{W}_{\pm}\left(
H_{1},H;J\right)  ,
\]%
\[
\mathcal{U}_{\pm}^{\ast}\left(  H_{1},H;J\right)  =\widetilde{W}_{\pm}^{\ast
}\left(  H_{1},H;J\right)
\]
and
\[
\mathcal{U}_{\pm}\left(  H,H;J^{\ast}J\right)  =\widetilde{W}_{\pm}\left(
H,H;J^{\ast}J\right)  .
\]
So by Lemmar \ref{A1} and equality
\[
\mathcal{U}_{\pm}^{\ast}\left(  H_{1},H;J\right)  \mathcal{U}_{\pm}\left(
H_{1},H;J\right)  =\mathcal{U}_{\pm}\left(  H,H;J^{\ast}J\right)  ,
\]
we know $W_{\pm}(H_{1},H;J)$ exists$.$ The proof is completed.
\end{proof}

\vspace{3cm}

\end{document}